\documentclass[10 pt, twoside, reqno]{amsproc}
\usepackage{amssymb,latexsym,amsthm,amsmath,mathtools,amsfonts,hyperref,fancyhdr,comment,enumitem, cleveref,xcolor,bbm, ytableau, caption, amssymb, amsaddr}

\topmargin=1.2cm
\textheight = 8.3in
\textwidth = 5.8in
\setlength{\oddsidemargin}{.8cm}
\setlength{\evensidemargin}{.8cm}
\hypersetup{
	colorlinks=true,
	linkcolor=blue,
	filecolor=magenta,
	urlcolor=cyan,
	citecolor=red,
}



\newcommand{\psl}{\textup{PSL}}

\newcommand{\tensor}{\otimes}
\newcommand{\Mod}{\mathrm{mod}}

\newcommand{\ind}{\mathrm{Ind}}
\newcommand{\res}{\mathrm{Res}}
\newcommand{\irr}{\mathrm{Irr}}
\newcommand{\ccn}{\mathrm{ccn}}
\newcommand{\ch}{\mathrm{ch}}


\newtheorem{theorem}{Theorem}[section]
\newtheorem{lemma}[theorem]{Lemma}
\newtheorem{corollary}[theorem]{Corollary}
\newtheorem{proposition}[theorem]{Proposition}
\newtheorem*{theorem*}{Theorem}
\newtheorem{definition}[theorem]{Definition}

\newtheorem{remark}[theorem]{Remark}

\newtheorem{example}[theorem]{Example}
\newtheorem{conjecture}[theorem]{Conjecture}
\numberwithin{equation}{section}
\newcommand{\ignore}[1]{}

\newcommand{\mynote}[1]{}


\begin{document}
	
	\title{Covering Numbers of Some Irreducible Characters of the Symmetric Group}
	\email{rijubrata8@gmail.com, velmurugan@imsc.res.in}
	\author[Rijubrata Kundu and Velmurugan S]{Rijubrata Kundu and Velmurugan S}\address{The Institute of Mathematical Sciences, Chennai}\address{Homi Bhabha National Institute, Mumbai}
	
	\date{\today}
	\subjclass[2010]{20B30, 20D06, 20C30, 05E05}
	\keywords{symmetric group, character covering number, kronecker product}
	
	\begin{abstract}
		The covering number of a non-linear character $\chi$ of a finite group $G$ is the least positive integer $k$ such that every irreducible character of $G$ occurs in $\chi^k$. We determine the covering numbers of irreducible characters of the symmetric group $S_n$ indexed by certain two-row partitions (and their conjugates), namely $(n-2,2)$ and $((n+1)/2, (n-1)/2)$ when $n$ is odd. We also determine the covering numbers of irreducible characters indexed by certain hook-partitions (and their conjugates), namely $(n-2,1^2)$, the almost self-conjugate hooks $(n/2+1, 1^{n/2-1})$ when $n$ is even, and the self-conjugate hooks $((n+1)/2, 1^{(n-1)/2})$ when $n$ is odd.
 	\end{abstract}

	\maketitle
	
	\section{Introduction}
	
	Arad, Chillag, and Herzog (see \cite{ach}) introduced the notion of covering number of a character of a finite group analogous to the notion of covering number of conjugacy classes of groups (see \cite{ash}). Let $G$ be a finite group, and $\irr(G)$ denote the set of all irreducible characters of $G$.  Let $\irr(G)^{+}:=\{\chi \in \irr(G) \mid \chi(1)>1\}$. For characters  $\chi$ and $\rho$ of $G$, recall that the product $\chi\rho$ is the character of $G$ for the internal tensor product of the respective representations of $G$ that $\chi$ and $\rho$ afford.  Let $c(\chi)$ denote the set of all the irreducible constituents of a character $\chi$ of $G$. The covering number of $\chi$, denoted by $\ccn(\chi;G)$, is the least positive integer $k$ (if it exists) such that $c(\chi^k)=\irr(G)$. Suppose that $\ccn(\chi;G)$ exists for all $\chi \in \irr(G)^{+}$. Then, the character-covering-number of $G$, denoted by $\ccn(G)$, is the least positive integer $m$ such that $c(\chi^m)=\irr(G)$ for all $\chi\in \irr(G)^{+}$. In other words, $\ccn(G):=\text{max}\{\ccn(\chi;G) \mid \chi \in \irr(G)^{+}\}$.
	
	\medskip
	
	In \cite[Theorem 1]{ach}, the authors proved that if $G$ is a finite non-abelian simple group, then $\ccn(\chi;G)$ exists for all $\chi \in \irr(G)^{+}$, and hence $\ccn(G)$ exists. Among several other results, assuming $G$ is a finite non-abelian simple group, they gave bounds for $\ccn(G)$ in terms of the number of conjugacy classes of $G$. Zisser (see \cite{zi}) proved that $\ccn(A_5)=3$ and $\ccn(A_n)=n-\lceil \sqrt{n} \;\rceil$ when $n\geq 6$. Very recently, Miller (see \cite{mi}) has shown that $\ccn(S_n)$ exists when $n\geq 5$ and that $\ccn(S_n)=n-1$. The irreducible characters of $S_n$ are parameterized by partitions of $n$. For a partition $\lambda\vdash n$, let $\chi_{\lambda}$ denote the irreducible character of $S_n$ indexed by $\lambda$. It is not difficult to see that $\ccn(\chi_{(n-1,1)};S_n)=\ccn(\chi_{(2,1^{n-2})};S_n)=n-1$. It is natural to ask whether there are other irreducible characters of $S_n$ whose covering number is $n-1$. The first result of this article shows that this is not possible.
	
	\begin{theorem}\label{Theorem_1}
		Let $n\geq 5$ and $\lambda$ be a partition of $n$. Assume that $\lambda \notin \{(n), (1^n), (n-1,1), (2,1^{n-2})\}$. Then  $\ccn(\chi_{\lambda};S_n)\leq \left \lceil \frac{2(n-1)}{3} \right\rceil$. Moreover, $\ccn(\chi_{(n-2,2)}; S_n)=\ccn(\chi_{(2^2,1^{n-4})};S_n)= \left\lceil \frac{2(n-1)}{3} \right\rceil$.
	\end{theorem}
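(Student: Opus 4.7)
The plan is to split the theorem into two parts. The conjugate case follows from the general identity $\chi_{\lambda'} = \chi_\lambda \cdot \chi_{(1^n)}$: since tensoring by the sign character is an involution on $\mathrm{Irr}(S_n)$, we have $c(\chi_{\lambda'}^k) = \mathrm{Irr}(S_n)$ iff $c(\chi_\lambda^k) = \mathrm{Irr}(S_n)$, and hence it suffices to handle $\lambda$ with $\lambda_1 \geq \lambda_1'$. The real work is then (i) the sharp value $\ccn(\chi_{(n-2,2)}; S_n) = \lceil 2(n-1)/3 \rceil$, and (ii) the general bound $\ccn(\chi_\lambda; S_n) \leq \lceil 2(n-1)/3 \rceil$ for every non-excluded $\lambda \notin \{(n-2,2), (2^2,1^{n-4})\}$.

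For (i), the upper bound comes from iterating Kronecker powers while exploiting the identity $\chi_{(n)} + \chi_{(n-1,1)} + \chi_{(n-2,2)} = \pi$, where $\pi$ is the permutation character of $S_n$ acting on the $\binom{n}{2}$ unordered pairs; this yields combinatorial control over $c(\chi_{(n-2,2)}^k)$ via powers of $\pi$, decomposed using the Littlewood--Richardson or Murnaghan--Nakayama rule. The matching lower bound is the technical heart: by Frobenius reciprocity the multiplicity of $\chi_{(1^n)}$ in $\chi_{(n-2,2)}^k$ equals that of $\chi_{(2^2,1^{n-4})}$ in $\chi_{(n-2,2)}^{k-1}$, so the question reduces to how quickly constituents of large column length $\ell(\nu)$ appear in the iterated products. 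I would establish a \emph{depth-control lemma}, in the spirit of Dvir's inequality on Kronecker constituents, bounding $\max\{\ell(\nu): \chi_\nu \in c(\chi_{(n-2,2)}^k)\}$ by a linear function of $k$ of slope $3/2$; reaching $\ell(\nu) \geq n-1$ then forces $k \geq \lceil 2(n-1)/3 \rceil - 1$. Obtaining the exact constant $2/3$ (rather than a weaker slope like $1/2$ or $1$) is the main obstacle and will require careful combinatorial accounting, likely with a case split on the parity of $n$ and a base-case check for small $n$.

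For (ii), my strategy is a comparison argument: for a non-excluded $\lambda$ outside the tight family, either the degree $\chi_\lambda(1)$ is significantly larger than $\chi_{(n-2,2)}(1) = n(n-3)/2$, yielding faster coverage by dimension-type arguments, or one exhibits $\chi_{(n-2,2)}$ (or $\chi_{(2^2,1^{n-4})}$) as a constituent of some small Kronecker power $\chi_\lambda^m$, whence $\chi_\lambda^{m k}$ dominates $\chi_{(n-2,2)}^k$ in constituents and the bound transfers. The cases split naturally by shape: two-row partitions $(n-k,k)$ with $k \geq 3$ via explicit Kronecker formulas for two-row characters; hooks $(n-k, 1^k)$ with $k \geq 2$ via the companion results of this paper; three-or-more-row partitions via branching rules combined with constituent analysis. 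The bulk of this part is bookkeeping rather than fundamentally new ideas; the substantive content is concentrated in the depth-control lemma of (i).
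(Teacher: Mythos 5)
Your outline correctly isolates the two halves of the problem and correctly identifies the permutation character $\sigma_{(n-2,2)}=\chi_{(n)}+\chi_{(n-1,1)}+\chi_{(n-2,2)}$ as the controlling object, but the key lemma you propose for the lower bound does not work as stated. A depth-control estimate on $\max\{\ell(\nu):\chi_\nu\in c(\chi_{(n-2,2)}^k)\}$ alone cannot have slope $3/2$: by Dvir's theorem the maximal number of rows of a constituent of $\chi_\nu\chi_{(n-2,2)}$ is $|\nu\cap(2,2,1^{n-4})|$, which is typically $\ell(\nu)+2$, so the row count can and does grow by $2$ per step (already $\chi_{(n-3,1^3)}\in c(\chi_{(n-2,2)}^2)$, by the paper's Lemma 4.5(3)); tracking $\ell$ alone therefore yields only $k\gtrsim n/2$, which is strictly weaker than $\lceil 2(n-1)/3\rceil$. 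The paper gets the correct constant by passing to the dominating permutation character $\sigma_{(n-2,2)}$ (so that $c(\chi_{(n-2,2)}^k)\subseteq c(\sigma_{(n-2,2)}^k)$) and tracking \emph{two} statistics simultaneously: at each step the increment $a_i$ in the number of parts and the decrement $b_i$ of the largest part satisfy $a_i+b_i\le 3$, while the journey from $(n)$ to $(1^n)$ costs $\sum(a_i+b_i)=2(n-1)$ in total. You cannot push the row count up at rate $2$ and push the largest part down at rate $1$ indefinitely, and it is exactly this trade-off that produces $2/3$. Since you flag the constant as "the main obstacle" without supplying the mechanism, this is a genuine gap at the heart of the theorem.

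The general upper bound also does not close as proposed. If $c(\chi_{(n-2,2)})\subseteq c(\chi_\lambda^m)$ then you only get $\ccn(\chi_\lambda;S_n)\le m\cdot\lceil 2(n-1)/3\rceil$; since $m=1$ is impossible for an irreducible $\chi_\lambda$, every instance of this transfer overshoots the claimed bound, and degree comparisons do not control covering numbers in any direct way. The paper's actual mechanism is different and is the ingredient your plan is missing: it proves that for every non-rectangular $\lambda$ outside the excluded list one has $c(\sigma_{(n-3,1^3)})\subseteq c(\chi_\lambda^2)$ (a sharpening of Miller's lemma, obtained by computing $\langle\sigma_\mu,\chi_\lambda^2\rangle=\langle\res_{S_\mu}\chi_\lambda,\res_{S_\mu}\chi_\lambda\rangle$ for $\mu=(n-3,3),(n-3,2,1),(n-3,1^3)$ via the Littlewood--Richardson rule), and $c(\sigma_{(n-6,1^6)})\subseteq c(\chi_\lambda^4)$ for rectangular $\lambda$; combined with $\ccn(\sigma_{(n-k,1^k)};S_n)=\lceil(n-1)/k\rceil$ and Lemma 4.10 this gives the uniform bound $\lceil 2(n-1)/3\rceil$ in a few lines of bookkeeping. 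A related containment-direction issue affects your upper bound for $(n-2,2)$ itself: $c(\chi_{(n-2,2)}^k)\subseteq c(\sigma_{(n-2,2)}^k)$, so covering by powers of the permutation character does not by itself yield covering by powers of the irreducible; one needs either an equality $c(\chi_\lambda^i)=c(\sigma_\lambda^i)$ at some stage (the paper's Lemma 3.6) or, as the paper does here, to observe that $(n-2,2)$ is non-rectangular and falls under the general argument.
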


	We determine the covering number of some other irreducible characters of $S_n$.  We have the following two theorems:
	
	\begin{theorem}\label{Theorem_2}
		Let $n\geq 5$ be odd and $k=\frac{n+1}{2}$. Then $\ccn(\chi_{(k,k-1)};S_n)=\ccn(\chi_{(2^{k-1},1)};S_n)=\lceil\log_2 n \rceil$. 
	\end{theorem}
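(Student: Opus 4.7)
Since $(2^{k-1},1) = (k,k-1)'$, we have $\chi_{(2^{k-1},1)} = \chi_{(k,k-1)} \otimes \chi_{(1^n)}$, and tensoring with the sign character permutes $\irr(S_n)$ and so preserves covering numbers. Thus $\ccn(\chi_{(k,k-1)}; S_n) = \ccn(\chi_{(2^{k-1},1)}; S_n)$; writing $\chi = \chi_{(k,k-1)}$, it suffices to prove $\ccn(\chi; S_n) = \lceil \log_2 n \rceil$.

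For the lower bound, the plan is to invoke the following length bound for Kronecker products: if $\chi_\lambda$ is a constituent of $\chi_\mu \otimes \chi_\nu$, then $\ell(\lambda) \leq \ell(\mu)\ell(\nu)$. To prove it, I would embed $V_\mu \otimes V_\nu$ in $\mathbb{C}[S_n/S_\mu \times S_n/S_\nu]$, decompose via Mackey's theorem as a sum of induced characters $\ind_{\prod_{i,j} S_{M_{ij}}}^{S_n} \mathbf{1}$ over non-negative integer matrices $M$ of shape $\ell(\mu) \times \ell(\nu)$ with row sums $\mu$ and column sums $\nu$, and then apply Young's rule: each such induced character decomposes into $\chi_\lambda$ with $\lambda$ dominating the composition formed by the entries of $M$, forcing $\ell(\lambda) \leq \ell(\mu) \ell(\nu)$. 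Iterating this bound with $\mu = \nu = (k,k-1)$ (so $\ell(\mu) = \ell(\nu) = 2$), every constituent of $\chi^m$ has length at most $2^m$. Since $\chi_{(1^n)}$ has length $n$, we need $2^m \geq n$ for $\chi_{(1^n)}$ to appear in $\chi^m$, giving $\ccn(\chi; S_n) \geq \lceil \log_2 n \rceil$.

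For the upper bound, the plan is to first establish the explicit decomposition
\[
\chi^2 \;=\; \sum_{\substack{\lambda \vdash n \\ \ell(\lambda) \leq 4}} \chi_\lambda
\]
with each multiplicity equal to one, for example by writing $\chi = \pi_{k-1} - \pi_{k-2}$ (with $\pi_j = \ind_{S_j \times S_{n-j}}^{S_n} \mathbf{1}$), expanding $\chi^2$, applying Mackey's formula to the products $\pi_a \pi_b$, and using Young's rule. Then one proceeds by induction on $m \geq 2$, aiming to show that $\chi^m$ contains every $\chi_\lambda$ with $\ell(\lambda) \leq 2^m$: the inductive step requires producing, for each target $\lambda$ of length at most $2^{m+1}$, a constituent $\chi_\mu$ of $\chi^m$ (necessarily of length at most $2^m$) with positive Kronecker coefficient $g((k,k-1), \mu, \lambda) > 0$. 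Setting $m = \lceil \log_2 n \rceil$ then yields $2^m \geq n$, so all irreducible characters are covered, matching the lower bound.

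I expect the main obstacle to be the inductive step of the upper bound: for each $\lambda$ of length up to $2^{m+1}$, producing a compatible $\mu$ of length at most $2^m$ with $g((k,k-1), \mu, \lambda) > 0$. A natural candidate for $\mu$ is obtained by collapsing $\lambda$'s column structure so that its length is roughly halved, but verifying that the resulting Kronecker coefficient is non-zero will require either a tailored Kronecker-product rule for two-row-by-arbitrary shapes in the spirit of Remmel or Rosas, or an iterative application of the base-case formula for $\chi^2$ combined with Littlewood-type identities.
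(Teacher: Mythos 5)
Your reduction to $\lambda=(k,k-1)$ via the sign twist, your lower bound, and your base case $c(\chi^2)=\{\chi_\mu \mid l(\mu)\le 4\}$ all align with the paper: the length bound $l(\lambda)\le l(\mu)l(\nu)$ for constituents of a Kronecker product is exactly what the paper extracts from $c(\chi_\mu\chi_\nu)\subseteq c(\sigma_\mu\sigma_\nu)$ together with \Cref{Kronecker_product_complete_symmetric} and Young's rule, and the square decomposition is quoted from \cite[Corollary 4.1]{gwxz} rather than rederived. The problem is your upper bound. Your plan is an induction producing, for each $\lambda$ with $l(\lambda)\le 2^{m+1}$, a constituent $\chi_\mu$ of $\chi^m$ with $g_{(k,k-1)\mu\lambda}>0$, and you yourself flag this as the ``main obstacle'' without resolving it. As written this is not a proof: no candidate $\mu$ is specified, no positivity criterion for these Kronecker coefficients is invoked, and the tools you gesture at (Remmel/Rosas-type rules) are stated for pairs of hooks, not for a two-row shape against an arbitrary shape. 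The central step of the argument is therefore missing.

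The paper closes exactly this gap by a much cheaper device that avoids all further Kronecker computations. Since $c(\chi)\subseteq c(\sigma_{(k,k-1)})$ and $c(\chi^2)=c(\sigma_{(k,k-1)}^2)$ (both equal $\{\chi_\mu \mid l(\mu)\le 4\}$), \Cref{basic_lemma_2}(2) forces $c(\chi^j)=c(\sigma_{(k,k-1)}^j)$ for every $j\ge 2$; as $\lceil\log_2 n\rceil\ge 3$ for $n\ge 5$, \Cref{basic_lemma_3} reduces the whole theorem to computing $\ccn(\sigma_{(k,k-1)};S_n)$. That is a purely combinatorial question about iteratively splitting the parts of a partition in two (via the $\alpha\times(k,k-1)$ matrices of \Cref{Kronecker_product_complete_symmetric}), and the paper exhibits an explicit balanced-splitting sequence $(n)=\alpha_0\to\cdots\to\alpha_r=(1^n)$ with $r=\lceil\log_2 n\rceil$, the only delicacy being the adjustment of the two column sums to $k$ and $k-1$ at each step. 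If you want to complete your argument, adopt this reduction: it replaces the Kronecker-coefficient positivity you would otherwise need at every inductive step with Kostka-number positivity, which is just dominance order.
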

	
	\begin{theorem}\label{Theorem_3}
		Let $n\geq 5$.
		\begin{enumerate}
			\item $\ccn(\chi_{(n-2,1^2)};S_n)=\ccn(\chi_{(3,1^{n-3})};S_n)=\lfloor \frac{n}{2} \rfloor$.
			
			\item Let $\lambda=(\frac{n+1}{2},1^{\frac{n-1}{2}})$ when $n$ is odd and $\lambda=(\frac{n}{2}+1, 1^{\frac{n}{2}-1})$ or $(\frac{n}{2},1^{\frac{n}{2}})$ when $n$ is even. Then
			$\ccn(\chi_{\lambda};S_n)=\lceil\log_2 \lfloor\sqrt{n} \rfloor \rceil+1$.
		\end{enumerate}
	\end{theorem}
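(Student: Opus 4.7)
Both parts rest on the identification $\chi_{(n-k,1^k)}\cong \wedge^k W$, where $W$ is the standard $(n-1)$-dimensional representation of $S_n$. This yields the generating-function formula
\[
\sum_{k=0}^{n-1} \chi_{(n-k,1^k)}(\pi)\, t^k \;=\; \frac{1}{1+t}\prod_{i}\bigl(1-(-t)^{c_i}\bigr),
\]
where the product runs over the cycles of $\pi$ with lengths $c_i$, together with the GL-equivariant embeddings $(\wedge^a W)^{\otimes b}\supseteq \wedge^{ab}W$ restricted to $S_n$. Since $\chi_{\lambda'}=\chi_\lambda\cdot\chi_{(1^n)}$, in each part it suffices to treat one member of the conjugate pair.

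\textbf{Part (1).} Writing $\chi=\chi_{(n-2,1^2)}=\wedge^2 W$, the sign character $\chi_{(1^n)}=\wedge^{n-1}W$ is a constituent of $\chi^m$ if and only if $\chi_{(3,1^{n-3})}=\wedge^{n-3}W$ appears in $(\wedge^2 W)^{\otimes(m-1)}$. A careful Kronecker analysis using the above generating function (for instance, evaluating at a distinguished cycle type such as an $n$-cycle, where hook characters take value $\pm 1$) shows that this fails for $m-1<\lceil(n-3)/2\rceil$, giving the lower bound $m\ge\lfloor n/2\rfloor$. For the matching upper bound, I would argue inductively that $\chi^m$ omits only a decreasing family of irreducibles whose last members (around the sign character and its conjugate) enter exactly at step $\lfloor n/2\rfloor$; the non-hook constituents are captured by combining Theorem~\ref{Theorem_1} with Pieri-type rules for $\chi\cdot\chi_{(n-1,1)}$ and an induction on $n$.

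\textbf{Part (2).} With $\chi=\chi_\lambda=\wedge^k W$ for the middle hook, the lower bound combines a dimension count $\chi(1)^m\ge \sum_\mu\chi_\mu(1)=i(n)$ (the number of involutions in $S_n$), using $\chi(1)=\binom{n-1}{\lfloor(n-1)/2\rfloor}$, with a refinement (via a specific hard-to-produce $\chi_\mu$) to reach the claimed $\lceil\log_2\lfloor\sqrt n\rfloor\rceil+1$. For the upper bound I would use a doubling strategy: starting from $\chi^1$, each subsequent multiplication by $\chi$ approximately doubles the range of hook indices $\ell$ for which $\wedge^\ell W$ occurs as a constituent of $\chi^m$, so after $\lceil\log_2\lfloor\sqrt n\rfloor\rceil$ steps every hook character is captured, and one final multiplication (the ``$+1$'') introduces every remaining non-hook irreducible via Pieri/Littlewood--Richardson-type expansions applied to tensor products of hook shapes.

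\textbf{Main obstacle.} The principal technical challenge is the upper bound in Part~(2): establishing genuine geometric growth of the hook constituents of $\chi^m$ demands sharp Kronecker-coefficient estimates for middle-hook characters which do not follow from Theorem~\ref{Theorem_1}, and the final step producing all non-hook irreducibles requires a delicate argument exploiting the rich combinatorial structure of middle-hook products. Matching the sharpness of the ``$+1$'' in the lower bound is also delicate, and likely requires pinpointing a specific obstructing character whose Kronecker multiplicity in $\chi^m$ vanishes at the penultimate step.
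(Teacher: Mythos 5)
There are genuine gaps in both parts, and in Part (2) the proposed mechanism is misaligned with how the constituents of powers of the middle-hook character actually grow.

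For Part (1), neither bound is actually established. Your reduction of the sign character to the occurrence of $\chi_{(3,1^{n-3})}$ in $\chi_{(n-2,1^2)}^{m-1}$ is correct, but ``evaluating at an $n$-cycle'' cannot detect whether a character is a constituent: character values at one class give no positivity information. The lower bound does hold, but for a different reason: $c(\chi_{\mu(2)}^r)\subseteq c(\sigma_{\mu(2)}^r)$, and by \Cref{hook_constituents} every constituent of $\sigma_{\mu(2)}^r$ has first part at least $n-2r$, so $\epsilon$ forces $r\geq\lceil\frac{n-1}{2}\rceil=\lfloor\frac{n}{2}\rfloor$ (this is \Cref{covering_number_sigma_hook}). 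The upper bound is the real work, and your sketch cannot supply it: \Cref{Theorem_1} only gives $\ccn\leq\lceil\frac{2(n-1)}{3}\rceil$, which exceeds $\lfloor\frac{n}{2}\rfloor$, and ``a decreasing family of omitted irreducibles'' is not an argument. The paper proves by induction that $c(\chi_{\mu(2)}^k)$ contains every $\chi_\lambda$ with $\lambda_1=n-2k+i$ and $\lambda_2\leq k+i$, using Blasiak's identity (\Cref{Kroncker_coeff_and_LR_for_hook}) to convert Kronecker positivity into explicit Littlewood--Richardson constructions, and then matches $c(\chi_{\mu(2)}^{\lceil n/3\rceil})$ with $c(\sigma_{\mu(2)}^{\lceil n/3\rceil})$ so that \Cref{basic_lemma_3} finishes. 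None of this is present in your outline.

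For Part (2), the quantity that doubles under multiplication by $\chi_{\mu(k)}$ is not the range of hook indices but the \emph{Durfee rank} of the constituents, and this misidentification breaks your upper-bound strategy. By \Cref{Kronecker_coeff_two_hook_shapes}, $\chi_{\mu(k)}^2$ already contains \emph{every} hook character and every double hook; the correct statement (which the paper proves) is $c(\chi_{\mu(k)}^i)\setminus\{\epsilon\}=\{\chi_\lambda : d(\lambda)\leq 2^{i-1}\}\setminus\{\epsilon\}$. In particular, if at some stage you only had ``all hooks,'' one further multiplication could only reach Durfee rank $2$, so your final ``$+1$'' step producing all remaining irreducibles at once cannot work. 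The sharp lower bound likewise does not come from the dimension count $\chi(1)^m\geq i(n)$, which only yields $m\gtrsim\log_2\sqrt{n}-O(1)$ and cannot pin down the exact constant; it comes from the superadditivity $d(\lambda)\leq d(\mu)+d(\nu)$ whenever $c^{\lambda}_{\mu\nu}>0$ (\Cref{durfee_inequality_LR_coeff}) fed through Blasiak's identity, which shows the Durfee rank can at most double per step and hence a partition of maximal Durfee rank $\lfloor\sqrt{n}\rfloor$ cannot appear before step $\lceil\log_2\lfloor\sqrt{n}\rfloor\rceil+1$. The matching upper bound requires constructing, for each $\lambda$ of Durfee rank $d$, explicit LR tableaux splitting $\lambda$ into pieces of Durfee rank about $d/2$; this is the technical heart of the paper's argument and is absent from your plan.
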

	
	\noindent The above theorem yields the covering number of the irreducible constituents of $\res_{A_n}^{S_n} \chi_{\lambda}$ when $\lambda=(\frac{n+1}{2},1^{\frac{n-1}{2}})$ is the self-conjugate hook.
	
	\begin{theorem}\label{Theorem_4}
		Assume $n\geq 5$. Let $\lambda=(\frac{n+1}{2},1^{\frac{n-1}{2}})$ when $n$ is odd and $\lambda=(\frac{n}{2}+1, 1^{\frac{n}{2}-1})$ when $n$ is even. Let $\chi$ be $\res_{A_n}^{S_n}\chi_{\lambda}$ when $n$ is even and one of $\chi_{\lambda}^{+}$ or $\chi_{\lambda}^{-}$ (where $\res_{A_n}^{S_n}\chi_{\lambda}=\chi_{\lambda}^{+} +\chi_{\lambda}^{-})$ when $n$ is odd. Then $\ccn(\chi;A_n)=\lceil\log_2 \lfloor\sqrt{n} \rfloor \rceil+1.$
	\end{theorem}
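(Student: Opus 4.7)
Set $k = \lceil\log_2\lfloor\sqrt{n}\rfloor\rceil + 1$. The plan is to transfer Theorem~\ref{Theorem_3}(2) to $A_n$ by Clifford theory. Recall that $\irr(A_n)$ splits into \emph{Type I} characters $\res_{A_n}^{S_n}\chi_\mu$ for $\mu\neq\mu'$ (one per conjugate pair $\{\mu,\mu'\}$) and \emph{Type II} pairs $\chi_\mu^\pm$ for $\mu = \mu'$, with $\res_{A_n}^{S_n}\chi_\mu = \chi_\mu^+ + \chi_\mu^-$; any $\sigma\in S_n\setminus A_n$ acts on $\irr(A_n)$ by fixing Type I and swapping each Type II pair.

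For the \emph{upper bound} when $n$ is even there is nothing subtle: $\lambda\neq\lambda'$, so $\chi = \res\chi_\lambda$ is already irreducible and $\chi^k = \res(\chi_\lambda^k)$. Theorem~\ref{Theorem_3}(2) says every $\chi_\mu$ appears in $\chi_\lambda^k$, whence $\res\chi_\mu$ appears in $\chi^k$, covering every Type I and both members of every Type II pair. When $n$ is odd, $\lambda = \lambda'$ and I may take $\chi = \chi_\lambda^+$. For a Type I $\rho$, the $\sigma$-action forces $\langle(\chi_\lambda^+)^k,\rho\rangle = \langle(\chi_\lambda^-)^k,\rho\rangle$, so their sum is even and positivity of one implies positivity of both; hence it is enough to show $\rho$ occurs in $(\chi_\lambda^+)^k + (\chi_\lambda^-)^k$, a $\sigma$-invariant subcharacter of $\res\chi_\lambda^k$. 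For Type II $\chi_\mu^\pm$ the analogous symmetry only gives $\langle(\chi_\lambda^+)^k,\chi_\mu^+\rangle = \langle(\chi_\lambda^-)^k,\chi_\mu^-\rangle$, which leaves a genuine asymmetry. I would exploit the push-pull identity $\ind_{A_n}^{S_n}(\phi\cdot\res\psi) = \ind_{A_n}^{S_n}(\phi)\cdot\psi$: applied iteratively with $\phi = (\chi_\lambda^+)^j$ this expresses $\chi_\lambda^k$ as a sum of inductions of products $(\chi_\lambda^+)^a(\chi_\lambda^-)^b$ with $a+b = k$, from which the $(a,b) = (k,0)$ term can be isolated.

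For the \emph{lower bound} in both parities one uses that the proof of Theorem~\ref{Theorem_3}(2) produces a specific $\chi_\mu \notin c(\chi_\lambda^{k-1})$. If $\mu = \mu'$, both $\chi_\mu^\pm$ are absent from $\res\chi_\lambda^{k-1}$, hence from $\chi^{k-1}$ in the even case and from $(\chi_\lambda^+)^{k-1} \leq \res\chi_\lambda^{k-1}$ in the odd case. If $\mu\neq\mu'$, one needs $\chi_{\mu'}$ also absent from $\chi_\lambda^{k-1}$: when $\lambda = \lambda'$, the identity $\chi_\lambda^{k-1}\cdot\mathrm{sgn} = \chi_\lambda^{k-1}$ (since $\chi_\lambda\mathrm{sgn} = \chi_{\lambda'} = \chi_\lambda$) forces $c(\chi_\lambda^{k-1})$ to be closed under $\mu\mapsto\mu'$, giving the conclusion automatically; in the even case it follows from explicit identification of the omitted $\mu$ in the proof of Theorem~\ref{Theorem_3}(2) together with the Galois-conjugate symmetry of the Kronecker coefficients $\langle\chi_\lambda^{k-1},\chi_\mu\rangle$ and $\langle\chi_\lambda^{k-1},\chi_{\mu'}\rangle$ coming from $\chi_\lambda\leftrightarrow\chi_{\lambda'}$.

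The \textbf{main obstacle} will be the asymmetric Type II analysis in the odd upper bound: the $\sigma$-symmetry only pairs $\chi_\mu^+$ inside $(\chi_\lambda^+)^k$ with $\chi_\mu^-$ inside $(\chi_\lambda^-)^k$, not within a single product. Showing both $\chi_\mu^\pm$ occur in $(\chi_\lambda^+)^k$ (rather than just one of them) will likely require either a direct Murnaghan--Nakayama computation of $\chi_\lambda^\pm$ on split classes of $A_n$, or a careful induction on $k$ that tracks contributions of the mixed products $(\chi_\lambda^+)^j(\chi_\lambda^-)^{k-j}$ appearing in the binomial expansion of $\res\chi_\lambda^k$.
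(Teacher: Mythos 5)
Your treatment of the even case and of the lower bound (via conjugation-closure of $c(\chi_\lambda^{k-1})$, which follows from the Durfee-rank description in the proof of \Cref{Theorem_3}(2)) is sound and matches the paper. But the odd-$n$ upper bound, which is the actual content of the theorem, is not proved: you correctly isolate the obstacle and then leave it open. Moreover the gap is wider than you state. Even for Type I characters $\rho$, your reduction only shows that it suffices to prove $\rho$ occurs in $(\chi_\lambda^+)^k+(\chi_\lambda^-)^k$; but $\res\chi_\lambda^k=\sum_j\binom{k}{j}(\chi_\lambda^+)^j(\chi_\lambda^-)^{k-j}$, and knowing $\rho$ occurs in the full binomial sum does not rule out that all of its multiplicity sits in the cross terms. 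The ``push-pull'' identity does not isolate the $(k,0)$ term from this sum, so no part of the odd upper bound is actually closed.

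The paper's resolution is the first of the two routes you mention at the end: a direct computation on the split classes. Since $\phi^{-1}(\mu(k))=(n)$, the characters $\chi_{\mu(k)}^{\pm}$ differ only on the class of $n$-cycles, where \Cref{irreps_alt} gives $\chi_{\mu(k)}^{\pm}(w_{(n)}^{+})=\tfrac12\bigl((-1)^k\pm\sqrt{(-1)^k n}\bigr)$; combined with the fact that for odd $n$ no Durfee-rank-$2$ partition is self-conjugate, this lets one write down $\chi_{\mu(k)}^{\pm2}$ and $\chi_{\mu(k)}^{+}\chi_{\mu(k)}^{-}$ explicitly (\Cref{lemma_alt_4}). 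A separate symmetry lemma (\Cref{lemma_alt_1}) shows that for $\lambda\neq\lambda'$ and $\mu=\mu'$ the products $\chi_{\lambda}\downarrow\chi_{\mu}^{+}$ and $\chi_{\lambda}\downarrow\chi_{\mu}^{-}$ have the same constituents away from $\chi_{\mu}^{\pm}$, and \Cref{lemma_alt_3} (using the Bessenrodt--Behns theorem) supplies Durfee-rank-$2$ constituents in each $\chi_{\mu(r)}\chi_{\mu(k)}$. Putting these together one gets $c(\chi_{\mu(k)}^{+3})=c(\chi_{\mu(k)}^{-3})\supseteq c(\chi_{\mu(k)}^{+2}\chi_{\mu(k)}^{-})=c(\chi_{\mu(k)}^{+}\chi_{\mu(k)}^{-2})$, hence $c(\chi_{\mu(k)}^{+3})=c(\chi_{\mu(k)}\downarrow^{3})$, and the stabilization lemma (\Cref{basic_lemma_3}) transfers the $S_n$ covering number to $\chi_{\mu(k)}^{+}$. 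Without some version of this explicit cube computation (or an equivalent control of the cross terms), your argument does not establish the upper bound for odd $n$.
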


	Surprisingly, from the last two theorems, we get an irreducible character of $S_n$ whose covering number coincides with that of its restriction to $A_n$. As far as we know, this is the first instance of such a property.  Based on  \Cref{Theorem_1}, \Cref{Theorem_2}, and observations made using SageMath (\cite{sam}), we make a conjecture on the covering number of $\chi_{\lambda}$ when $\lambda$ is a two-row partition, i.e., $\lambda=(n-k,k)$, where $1\leq k\leq \lfloor \frac{n}{2}\rfloor$ (see \Cref{conjecture_two_row_partition}).
	
	\medskip

	It is natural to seek irreducible characters of finite groups with small covering numbers. In this direction, a well-known conjecture of Jan Saxl states that if $\lambda\vdash n$ is the staircase partition, that is, $\lambda=(m,m-1,\ldots,1)$ (thus $n=\frac{m(m+1)}{2}$) where $m\geq 3$, then every irreducible character appears as a constituent of $\chi_{\lambda}^2$. In other words, the Saxl conjecture states that $\ccn(\chi_{\lambda};S_n)=2$, when $\lambda$ is the staircase partition. There have been some interesting advances in this direction (see, for example, \cite{ik,ls,pgv} and references therein), and recently it has been proved by N. Harman and C. Ryba that $c(\chi_{\lambda}^3)=\irr(S_n)$ (i.e., $\ccn(\chi_{\lambda};S_n)\leq 3$) (see \cite{hr}). It is also worthwhile to mention that if $G$ is a finite simple group of Lie type and $\text{St}$ denotes the Steinberg character of $G$, then every irreducible character of $G$ appears as a constituent of $\text{St}^2$, unless $G=\text{PSU}_n(q)$ and $n\geq 3$ is co-prime to $2(q+1)$ (see \cite{hstz}). The character covering number of  $\psl_2(q)$  has been determined in \cite{pa} recently.
 
 	\medskip
 	
 	The article is organized as follows: In Sect. 2, to make the article self-contained, we discuss some basic results in the ordinary representation theory of $S_n$. In Sect. 3, we discuss some basic properties related to powers of characters of finite groups and the Kronecker product problem, which naturally appears in this scenario. In Sect. 4, we prove \Cref{Theorem_1} and \Cref{Theorem_2}. \Cref{product_of_hooks} of this section is an interesting result as well. In Sect. 5 and Sect. 6, we prove \Cref{Theorem_3} and \Cref{Theorem_4}, respectively.
 	
 	\section{Basic results on ordinary representations of $S_n$}
 	In this section, we briefly discuss some basic results in the ordinary representation theory of symmetric groups. We begin with some definitions and notations. Let  $\lambda=(\lambda_1,\lambda_2,\ldots, \lambda_l)$, where $\lambda_1\geq \cdots \geq \lambda_l$ and $\lambda_i\in \mathbb{N}$. We say $\lambda$ is a partition of $|\lambda|=n$ (written $\lambda\vdash |\lambda|=n$), where $|\lambda|=\sum_i\lambda_i=n$ is the sum of its parts. The number of parts of $\lambda$ is denoted by $l(\lambda)$. Alternatively, for a partition $\lambda$ of $n$, we write $\lambda=\langle 1^{m_1},\cdots, i^{m_i},\cdots\rangle$, where $m_i$ is the number of times $i$ occurs as a part in $\lambda$. The conjugacy classes of $S_n$ are parameterized by partitions of $n$ in the following sense: for $\pi\in S_n$, let $m(\pi)=\langle 1^{m_1},2^{m_2},\cdots \rangle$ denote the cycle-type of $\pi$. Here, $m_i$ denotes the number of $i$-cycles in the disjoint cycle decomposition of $\pi$, whence we have $\sum_i im_i=n$. Thus, $m(\pi)$ yields a partition of $n$. Suppose $\sigma,\tau \in S_n$. Then $\sigma$ and $\tau$ are conjugate in $S_n$ if and only if $m(\sigma)=m(\tau)$. This yields the desired parametrization. For $\mu\vdash n$, let $w_{\mu}$ denote the standard representative of the class of $S_n$ parameterized by $\mu$. 
 	
 	For $\lambda\vdash n$, let $S_{\lambda}=S_{\lambda_1}\times S_{\lambda_2}\times \cdots \times S_{\lambda_l}$ be a Young subgroup of $S_n$. We write $\mathbbm{1}_{G}$ for the trivial character of a finite group $G$ (or just $\mathbbm{1}$ when the group $G$ is clear from the context). Let $\sigma_{\lambda}$ be the character of the partition representation of $S_n$ indexed by $\lambda$ (see \cite[Section 2.3]{ap}). In other words, $\sigma_{\lambda}=\ind_{S_{\lambda}}^{S_n} \mathbbm{1}$. Let $\chi_{\lambda}$ denote the irreducible character of $S_n$ indexed by $\lambda$. It is well known that $\chi_{\lambda'}=\epsilon \tensor \chi_{\lambda}$, where $\epsilon(=\chi_{(1^n)})$ is the sign character of $S_n$ and $\lambda'$ is the conjugate partition of $\lambda$. Let $T_{\lambda}$ denote the Young diagram of $\lambda$. Given partitions $\lambda, \mu \vdash n$, a semi-standard Young tableaux (abbrv. SSYT) of shape $\lambda$ and type $\mu$ is a positive integer filling of the Young diagram $T_{\lambda}$ such that (a) $i$ occurs $\mu_i$ times, (b) the entries increase weakly (from left to right) along each row, and (c) the entries increase strictly down each column. The following is an example of a SSYT of shape $(4,3,2,2)$ and type $(3,3,3,1,1)$.
 	
 	\medskip
 	
 	\begin{figure}[!h]
 		\ytableausetup{smalltableaux}
 		\begin{ytableau}
 			1 & 1 & 1 & 3 \\
 			2 & 2 & 2\\
 			3 & 3\\
 			4 & 5
 		\end{ytableau}
 	\caption{A SSYT of shape $(4,3,2,2)$ and type $(3,3,3,1,1)$.}
 	\label{Fig_1}
 	\end{figure}
 	
 	\noindent  The number of SSYT of shape $\lambda$ and type $\mu$ is denoted by $K_{\lambda\mu}$, and these are called Kostka numbers. For our example above, $K_{\lambda\mu}=2$. 
 	
 	\begin{theorem}[Young's rule]\cite[Theorem 3.3.1]{ap}\label{Youngrule} For a partition $\mu\vdash n$, we have $\displaystyle \sigma_{\mu}=\sum_{\lambda\vdash n} K_{\lambda\mu}\chi_{\lambda}.$
 	\end{theorem}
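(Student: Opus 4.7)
My plan is to prove Young's rule by induction on the number of parts $l(\mu)$. Write $\mu=(\mu_1,\ldots,\mu_l)$ and $\mu^{-}=(\mu_1,\ldots,\mu_{l-1})\vdash n-\mu_l$. The base case $l(\mu)=1$ is immediate: $\sigma_{(n)}=\mathbbm{1}_{S_n}=\chi_{(n)}$, matching $K_{(n)(n)}=1$. For the inductive step, transitivity of induction gives
\begin{equation*}
\sigma_\mu=\ind_{S_{n-\mu_l}\times S_{\mu_l}}^{S_n}\bigl(\sigma_{\mu^{-}}\boxtimes \mathbbm{1}_{S_{\mu_l}}\bigr),
\end{equation*}
so substituting the inductive hypothesis $\sigma_{\mu^{-}}=\sum_{\nu\vdash n-\mu_l} K_{\nu\mu^{-}}\chi_\nu$ reduces the problem to computing $\ind_{S_{n-\mu_l}\times S_{\mu_l}}^{S_n}(\chi_\nu\boxtimes\mathbbm{1})$ for each such $\nu$.

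For this step I would invoke Pieri's rule, which asserts
\begin{equation*}
\ind_{S_{n-\mu_l}\times S_{\mu_l}}^{S_n}(\chi_\nu\boxtimes\mathbbm{1})=\sum_{\lambda}\chi_\lambda,
\end{equation*}
where the sum ranges over $\lambda\vdash n$ with $\lambda\supseteq\nu$ and $\lambda/\nu$ a horizontal $\mu_l$-strip (no two cells in the same column). Combining with the inductive hypothesis, the multiplicity of $\chi_\lambda$ in $\sigma_\mu$ becomes $\sum_{\nu} K_{\nu\mu^{-}}$, summed over $\nu\subseteq\lambda$ for which $\lambda/\nu$ is a horizontal $\mu_l$-strip.

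The final bijective step identifies this sum with $K_{\lambda\mu}$. Given a SSYT $T$ of shape $\lambda$ and type $\mu$, the cells labeled $l$ form a horizontal $\mu_l$-strip, since strict column-increase prevents two $l$'s in the same column and they necessarily occupy the bottoms of those columns. Deleting these $\mu_l$ cells yields a SSYT $T'$ of shape some $\nu$ and type $\mu^{-}$ with $\lambda/\nu$ precisely that strip; reattaching the strip is the inverse. This gives $K_{\lambda\mu}=\sum_\nu K_{\nu\mu^{-}}$ over exactly the required $\nu$, closing the induction.

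The main obstacle is Pieri's rule itself. I would establish it either combinatorially, by decomposing $S_n/(S_{n-\mu_l}\times S_{\mu_l})$ via double cosets and iterating the branching rule for $S_{k}\hookrightarrow S_{k+1}$, or more cleanly by translating to symmetric functions: under the Frobenius characteristic map one has $\ch(\sigma_\mu)=h_\mu$ (products of induced trivials correspond to products of complete homogeneous symmetric functions) and $\ch(\chi_\lambda)=s_\lambda$, so Young's rule becomes the classical identity $h_\mu=\sum_\lambda K_{\lambda\mu}\,s_\lambda$, which follows from the tableau definition of $s_\lambda$ together with the dual Cauchy identity (or equivalently the RSK correspondence).
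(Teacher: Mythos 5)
Your argument is correct. Note that the paper does not actually prove Young's rule; it is quoted from \cite[Theorem 3.3.1]{ap}, so there is no in-paper proof to match against. Your induction on $l(\mu)$ is the standard textbook argument: transitivity of induction gives $\sigma_\mu=\ind_{S_{n-\mu_l}\times S_{\mu_l}}^{S_n}(\sigma_{\mu^-}\boxtimes\mathbbm{1})$, the Pieri rule handles the induction product with $\mathbbm{1}_{S_{\mu_l}}$, and the strip-removal bijection correctly establishes the recursion $K_{\lambda\mu}=\sum_\nu K_{\nu\mu^-}$ (the cells labelled $l$ do form a horizontal strip whose removal leaves a genuine Young diagram, by the row and column conditions). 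Two small points: in the base case you should also observe that $K_{\lambda,(n)}=0$ for $\lambda\neq(n)$, which is immediate; and your proof rests on the Pieri rule, a result of essentially the same depth, which the paper itself only states as a corollary of the Littlewood--Richardson rule without proof --- so your reliance on it is consistent with the paper's conventions, and your fallback of proving the equivalent symmetric-function identity $h_\mu=\sum_\lambda K_{\lambda\mu}s_\lambda$ directly via the Cauchy identity and the tableau expansion $s_\lambda=\sum_\mu K_{\lambda\mu}m_\mu$ is a legitimate, and arguably cleaner, alternative route.
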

 	 
 	\noindent It is well known that $K_{\lambda\mu}>0$ if and only if $\mu \trianglelefteq \lambda$, where $\trianglelefteq$ denotes the dominance order on the set of partitions of $n$ (see \cite[Lemma 3.1.12]{ap}).
 	 
 	\subsection{Symmetric functions}
 	In this subsection, we recall some basics of the theory of symmetric functions and its relation to the character theory of $S_n$. We follow the exposition in \cite[Chapter 5]{ap} (see also \cite[Chapter 1]{mac} and \cite[Chapter 7]{st}). Let $\Lambda$ denote the algebra of symmetric functions over the field $\mathbb{Q}$, and $\Lambda_n$ denotes the vector subspace of $\Lambda$ of all homogeneous symmetric functions of degree $n$.
 	The dimension of $\Lambda_n$ is the number of partitions of $n$, which we denote by $p(n)$. Let $\lambda\vdash n$. There are five fundamental bases of $\Lambda_n$: (1) the basis of monomial symmetric functions $m_{\lambda}$, (2) the basis of elementary symmetric functions $e_{\lambda}$, (3) the basis of complete symmetric functions $h_{\lambda}$, (4) the basis of power symmetric functions $p_{\lambda}$, and (5) the basis of Schur functions $s_{\lambda}$. 
 	 
 	The Frobenius characteristic function denoted by $ch_n$ relates the vector space of  class functions $R(S_n)$ of $S_n$ to $\Lambda_{n}$. It is defined by
 	$$\ch_n(f)=\frac{1}{n!}\sum_{\pi\in S_n}f(\pi)p_{m(\pi)}.$$
 	Note that the above is a $\mathbb{Q}$-linear map from $R(S_n)$ to $\Lambda_n$. The algebra $\Lambda$ is equipped with the Hall inner-product. It is  defined by imposing that the basis of Schur functions $\{s_{\lambda} \mid \lambda\vdash n\}$ form an orthonormal basis of $\Lambda_n$, that is, $\langle s_{\lambda}, s_{\mu} \rangle=\delta_{\lambda\mu}$. It is then extended bilinearly to $\Lambda$ by imposing that if $f$ and $g$ are homogeneous, then $\langle f,g\rangle\neq 0$ only if $f$ and $g$ have the same degree.  Recall that $R(S_n)$ is also equipped with an inner-product which is defined by $\displaystyle \langle f,g\rangle =\frac{1}{n!}\sum_{\pi\in S_n}f(\pi)g(\pi^{-1})$, where $f,g\in R(S_n)$.	With these forms in $R(S_n)$ and $\Lambda_n$, $\ch_n$ is an isometry, that is, for all $f,g\in R(S_n)$, $\langle \ch_n(f),\ch_n(g)\rangle = \langle f,g\rangle$. The following result gives the desired connection.
 	
 	\begin{proposition}
 	 	For partition $\lambda\vdash n $, we have $\ch_n(\chi_{\lambda})=s_{\lambda}$, $\ch_n(\sigma_{\lambda})=h_{\lambda}$, and $\ch_n(\epsilon \sigma_{\lambda})=e_{\lambda}$.
 	\end{proposition}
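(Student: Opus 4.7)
The plan is to first establish the three equalities via the multiplicativity of the Frobenius characteristic under induction from Young subgroups, and then to pin down $\ch_n(\chi_\lambda)$ by a triangularity argument using Young's rule. The central tool is the following identity: for $\phi\in R(S_m)$ and $\psi\in R(S_n)$,
$$\ch_{m+n}\!\left(\ind_{S_m\times S_n}^{S_{m+n}}(\phi\otimes\psi)\right)=\ch_m(\phi)\cdot\ch_n(\psi).$$
I would prove this by expanding the induced character with Frobenius's formula, restricting attention to conjugates lying in $S_m\times S_n$, and reorganizing the sum according to the cycle types of the two factors. The identity $p_{\alpha\sqcup\beta}=p_\alpha p_\beta$ turns the double sum into a product, and a careful count of centralizer sizes cancels the factor $1/(m!\,n!)$ against the orbit count.

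Next I would treat the two base cases $\ch_n(\mathbbm{1}_{S_n})=h_n$ and $\ch_n(\epsilon_{S_n})=e_n$ by grouping elements of $S_n$ by cycle type:
$$\ch_n(\mathbbm{1}_{S_n})=\sum_{\mu\vdash n}\frac{p_\mu}{z_\mu}\qquad\text{and}\qquad \ch_n(\epsilon_{S_n})=\sum_{\mu\vdash n}\frac{\epsilon_\mu}{z_\mu}p_\mu,$$
where $z_\mu=\prod_i i^{m_i}m_i!$ and $\epsilon_\mu=(-1)^{n-l(\mu)}$. The standard generating function identities $\sum_n h_nt^n=\exp\!\left(\sum_k p_kt^k/k\right)$ and $\sum_n e_nt^n=\exp\!\left(\sum_k (-1)^{k-1}p_kt^k/k\right)$ identify these two sums with $h_n$ and $e_n$ respectively.

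Since $\sigma_\mu=\ind_{S_{\mu_1}\times\cdots\times S_{\mu_l}}^{S_n}(\mathbbm{1}\otimes\cdots\otimes\mathbbm{1})$, iterating the multiplicativity step gives $\ch_n(\sigma_\mu)=h_{\mu_1}\cdots h_{\mu_l}=h_\mu$. For the sign-twist, the projection formula yields $\epsilon\cdot\sigma_\mu=\ind_{S_\mu}^{S_n}(\epsilon|_{S_\mu})$, and because $\epsilon|_{S_\mu}$ factors as $\epsilon_{S_{\mu_1}}\otimes\cdots\otimes\epsilon_{S_{\mu_l}}$, the same argument delivers $\ch_n(\epsilon\sigma_\mu)=e_{\mu_1}\cdots e_{\mu_l}=e_\mu$.

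Finally, for the irreducible characters, Young's rule (\Cref{Youngrule}) gives $\sigma_\mu=\sum_\lambda K_{\lambda\mu}\chi_\lambda$, hence $h_\mu=\sum_\lambda K_{\lambda\mu}\,\ch_n(\chi_\lambda)$. The combinatorial definition of the Schur functions yields the parallel expansion $h_\mu=\sum_\lambda K_{\lambda\mu}s_\lambda$ (equivalently, $s_\lambda=\sum_\mu K_{\lambda\mu}m_\mu$ is dual to $h_\mu$ under the Hall pairing $\langle h_\mu,m_\lambda\rangle=\delta_{\mu\lambda}$). Since the Kostka matrix is unitriangular with respect to the dominance order on partitions of $n$, inverting it term by term (equivalently, inducting on $\lambda$ in dominance order) forces $\ch_n(\chi_\lambda)=s_\lambda$. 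The main obstacle in this plan is the verification of the induction-product multiplicativity; everything else is a bookkeeping consequence of standard symmetric-function identities.
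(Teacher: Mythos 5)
Your argument is correct, but note that the paper offers no proof of this proposition at all: it is stated as a known fact with references to the standard texts, so there is nothing to compare against. Your route is the classical textbook one, and every step checks out: the multiplicativity $\ch_{m+n}(\ind_{S_m\times S_n}^{S_{m+n}}(\phi\otimes\psi))=\ch_m(\phi)\ch_n(\psi)$ via the Frobenius induction formula and $p_{\alpha\sqcup\beta}=p_\alpha p_\beta$; the base cases $\ch_n(\mathbbm{1})=\sum_\mu z_\mu^{-1}p_\mu=h_n$ and $\ch_n(\epsilon)=\sum_\mu \epsilon_\mu z_\mu^{-1}p_\mu=e_n$ from the exponential generating-function identities; the projection formula for the sign twist (which is exactly the paper's \Cref{induction_commutes_with_tensor}, so you could cite it rather than reprove it); and the final identification of $\ch_n(\chi_\lambda)$ with $s_\lambda$ by matching the two expansions of $h_\mu$ against the unitriangular Kostka matrix. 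The only point worth flagging is a potential circularity worry in the last step: your argument is non-circular only because Young's rule (\Cref{Youngrule}) is established in the cited source by purely combinatorial means (RSK), independently of the characteristic map; had Young's rule itself been derived from $\ch_n(\chi_\lambda)=s_\lambda$, the inversion would prove nothing. Since the paper's reference does prove Young's rule first, your proof stands as written.
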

 
 	For partitions $\lambda, \mu \vdash n$, we recall the Murnaghan-Nakayama rule, which states that $\chi_{\lambda}(w_{\mu})=\langle p_{\mu}, s_{\lambda} \rangle$. In other words, $\displaystyle p_{\mu}=\sum_{\lambda\vdash n}\chi_{\lambda}(w_{\mu})s_{\lambda}$. Let $\mu \vdash m, \nu \vdash n$. It is a well-known result that $\ch_{m+n}(\ind_{S_m\times S_n}^{S_n}\chi_{\lambda}\otimes \chi_{\mu})=s_{\mu}s_{\nu}$, where $\chi_{\lambda}\otimes \chi_{\mu}$ is the external tensor product of $\chi_{\lambda}$ and $\chi_{\mu}$. Notice that in the RHS, the product is the usual product of symmetric functions. Thus, for a partition $\lambda \vdash m+n$, we get
 	\begin{equation}\label{LR_coeffcients}
 	 	\langle s_{\mu}s_{\nu}, s_{\lambda} \rangle =\langle \ind_{S_m\times S_n}^{S_{m+n}}\chi_{\mu}\otimes \chi_{\nu},\chi_{\lambda}\rangle=\langle \res_{S_m\times S_n}^{S_{m+n}}\chi_{\lambda},\chi_{\mu}\otimes \chi_{\nu}\rangle=c^{\lambda}_{\mu\nu}.
 	\end{equation}
 	The coefficients $c_{\mu\nu}^{\lambda}$ are called the Littlewood-Richardson coefficients (abbrv. LR-coefficients). As a consequence of \Cref{LR_coeffcients}, if $f\in \Lambda_m$ and $g\in \Lambda_n$, then
 	\begin{equation}\label{product_of_sym_interpret_repn}
 		fg=\ch_{m+n}(\ind_{S_m\times S_n}^{S_{m+n}}\chi_f\tensor\chi_g),
 	\end{equation}
 	 where $\chi_f=\ch_m^{-1}(f)\in R(S_m)$, $\chi_g=\ch_n^{-1}(g)\in R(S_n)$, and $\chi_f\tensor \chi_g \in R(S_m\times S_n)$, which is defined by $(\chi_f\tensor\chi_g)(\alpha,\beta):=\chi_f(\alpha)\chi_g(\beta)$ for all $\alpha\in S_m, \beta\in S_n$. 
 	\subsection{Littlewood-Richardson rule} We finish this section with the statement of the Littlewood-Richardson rule (abbrv. LR rule), which gives a combinatorial interpretation of the LR-coefficients. This will be needed for our computations later. Let $\lambda\vdash n$. A lattice permutation of shape $\lambda$ is a sequence of positive integers $a_1a_2\cdots a_n$, where $i$ occurs $\lambda_i$ times, and in any left factor $a_1a_2\cdots a_j$, the number of $i$'s is at least the number of $(i+1)$'s (for all $i$). For example, a lattice permutation of shape $(3,2,1)$ is 121321. A reading word of a Young tableaux $T$ is the sequence of entries of $T$ obtained by concatenating the rows of $T$ from bottom to top. For example, the reading word of the SSYT in \Cref{Fig_1} is 45332221113. For partitions $\lambda, \mu$, we say $\mu \subseteq \lambda$ if $\mu_i\leq \lambda_i$ for all $i$ (i.e., $T_{\mu}\subseteq T_{\lambda}$).  Let $T_{\lambda/\mu}$ denote the Young diagram of the (skew) shape $\lambda/\mu$. A SSYT of (skew) shape $\lambda/\mu$ and type $\nu$ is a filling of $T_{\lambda/\mu}$ with positive integer entries such that $i$ occurs $\nu_i$ times, and entries along each row from left to right (resp. along each column from top to bottom) are weakly increasing (resp. strictly increasing). \Cref{Fig_2} is an example of a SSYT of skew shape $(6,5,3,3)/(3,1,1)$ and type $(4,3,2,2,1)$. 
 	
 	 \begin{figure}[!h]
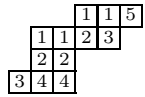

 		\centering
 		\begin{ytableau}
 			\none & \none & \none & 1 & 1 & 5\\
 			\none & 1 & 1 & 2 & 3\\
 			\none & 2 & 2\\
 			3 & 4 & 4  
 		\end{ytableau}
 		\caption{A SSYT of skew shape $(6,5,3,3)/(3,3,1)$ and type $(4,3,2,2,1)$.}
 		\label{Fig_2}
 	\end{figure}
 	
 	We are now ready to state the Littlewood-Richardson rule.

 	\begin{theorem}\cite[Littlewood-Richardson rule, Theorem A.1.3.3]{st}\label{the_LR_rule} For partitions $\mu\vdash n$, $\nu \vdash m$, and $\lambda \vdash m+n$, the LR coefficient $c_{\mu\nu}^{\lambda}$ is equal to the number of $\mathrm{SSYT}$ of shape $\lambda/\mu$ and type $\nu$ whose reverse reading word is a lattice permutation.
 	\end{theorem}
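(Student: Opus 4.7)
The plan is to establish the Littlewood--Richardson rule via skew Schur functions and Sch\"utzenberger's jeu-de-taquin theory. First I would introduce the skew Schur function $s_{\lambda/\mu} := \sum_T x^T$, where $T$ ranges over SSYT of skew shape $\lambda/\mu$, and verify that $s_{\lambda/\mu}\in\Lambda$ by establishing the Jacobi--Trudi determinant $s_{\lambda/\mu}=\det(h_{\lambda_i-\mu_j-i+j})$; a clean route is the Lindstr\"om--Gessel--Viennot lemma applied to non-intersecting families of lattice paths, which also realises the positivity in the monomial basis.

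Next I would establish the adjointness
\[
\langle s_\mu s_\nu,\, s_\lambda\rangle \;=\; \langle s_\nu,\, s_{\lambda/\mu}\rangle,
\]
which is the content of the fact that $\Lambda$ is a self-dual Hopf algebra under the Hall inner product, with coproduct determined by $\Delta(s_\lambda)=\sum_\mu s_{\lambda/\mu}\otimes s_\mu$. Combined with \eqref{LR_coeffcients}, this reduces the task to expanding $s_{\lambda/\mu}$ in the Schur basis and proving
\[
s_{\lambda/\mu}=\sum_{\nu}\bigl|\{T\in\mathrm{SSYT}(\lambda/\mu,\nu):\text{reverse reading word of }T\text{ is a lattice permutation}\}\bigr|\, s_\nu.
\]

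For the combinatorial heart I would invoke Sch\"utzenberger's theory of jeu-de-taquin slides and Knuth equivalence. The key ingredients are: (a) every skew SSYT $T$ has a unique rectification $\mathrm{rect}(T)$, a straight-shape SSYT, and rectification preserves content; (b) each individual jeu-de-taquin slide preserves the monomial weight $x^T$, so for any fixed straight-shape tableau $U$ of shape $\nu$, the generating function $\sum_{T:\mathrm{rect}(T)=U} x^T$ equals $s_\nu$; (c) for each $\nu$ there is a unique SSYT of shape $\nu$ and content $\nu$ whose reverse reading word is a lattice permutation, namely the Yamanouchi tableau $U_\nu$ with row $i$ filled by $i$'s; and (d) a skew SSYT $T$ rectifies to $U_\nu$ if and only if the reverse reading word of $T$ is itself a lattice permutation. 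Summing (b) over the Knuth classes characterised by (c) and (d) delivers the displayed expansion of $s_{\lambda/\mu}$, hence the formula for $c_{\mu\nu}^\lambda$.

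The main obstacle is item (d) together with the weight-preservation statement (b): these form the technical core of Sch\"utzenberger's theory, and verifying them requires a careful case analysis of elementary slides and the proof that the lattice (Yamanouchi) property is a Knuth-class invariant. I would argue (b) by inspecting horizontal and vertical slides and noting that the multiset of entries is unchanged, and (d) by induction on the size of $\lambda/\mu$, tracking how a slide affects the reading word and using the fact that a tableau of straight shape $\nu$ with content $\nu$ must be $U_\nu$. Once these structural facts are in place, the theorem follows immediately by assembling the Knuth-class decomposition inside each skew shape.
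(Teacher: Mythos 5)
The paper does not actually prove this statement: it is quoted verbatim from Stanley \cite[Theorem A.1.3.3]{st}, so there is no internal proof to compare against. Your plan follows the standard jeu-de-taquin route, which is essentially the argument given in the cited reference, and most of the ingredients are correctly identified: the skew Schur functions and their symmetry via Jacobi--Trudi/LGV, the adjunction $\langle s_\mu s_\nu, s_\lambda\rangle = \langle s_\nu, s_{\lambda/\mu}\rangle$, the well-definedness and content-preservation of rectification, and the characterisation of the tableaux with lattice reverse reading word as exactly those rectifying to the Yamanouchi tableau $U_\nu$.

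There is, however, a genuine gap at step (b), and the final assembly does not go through as written. Since rectification preserves content, every $T$ with $\mathrm{rect}(T)=U$ satisfies $x^T=x^U$, so $\sum_{T:\,\mathrm{rect}(T)=U}x^T$ equals $\left|\{T:\mathrm{rect}(T)=U\}\right|\cdot x^U$, a single monomial times a count --- it cannot equal $s_\nu$. Consequently, ``summing (b) over the classes characterised by (c) and (d)'' would produce $\sum_\nu c^\lambda_{\mu\nu}\,x^\nu$ rather than the required $\sum_\nu c^\lambda_{\mu\nu}\,s_\nu$. The missing ingredient is the lemma that, for a fixed skew shape $\lambda/\mu$, the number of SSYT of that shape rectifying to a given straight-shape tableau $V$ depends only on the shape $\nu$ of $V$ and not on $V$ itself; granting this, one sums over all $V$ of shape $\nu$ to recover $c^\lambda_{\mu\nu}\,s_\nu$, and then evaluates the common fiber size at $V=U_\nu$ to identify it with the number of lattice-word tableaux. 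Equivalently, one can partition the skew SSYT of shape $\lambda/\mu$ into coplactic (dual-equivalence, or crystal) classes, each weight-isomorphic to $\mathrm{SSYT}(\nu)$ and containing exactly one lattice-word element. Either repair is standard, but it is precisely the nontrivial combinatorial core of the theorem; controlling only the fiber over the single tableau $U_\nu$, as your outline does, is not enough to produce the Schur expansion of $s_{\lambda/\mu}$.
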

 	
 	A SSYT satisfying the condition of the above theorem is called a Littlewood-Richardson tableaux (abbrv. LR tableaux). For the tableaux in \Cref{Fig_2}, the reverse reading word (traversing top to bottom and reading right to left along a row) is 511321122443, which is not a lattice permutation. Hence, it is not a LR tableaux. We also mention two special cases of the above rule for convenience. A skew shape is called a horizontal strip of size $n$ if there are $n$ boxes, and each column has a single box. For example, if $\lambda=(4,2,1)$ and $\mu=(2,1)$, then $\lambda/\mu$ is a horizontal strip of size $4$. A skew shape is called a vertical strip of size $n$ if there are $n$ boxes, and each row has a single box. For example, if $\lambda=(3,3,3,2)$ and $\mu=(2,2,2,1)$, then $\lambda/\mu$ is a vertical strip of size $4$ (see the figures below).
 	
 	\begin{figure}[!h]
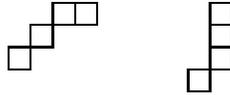

 		\centering
 		\ydiagram{2+2, 1+1,1}\;\;\;\;\;\;\;\; \ydiagram{2+1, 2+1,2+1,1+1}
 		\caption{A horizontal strip  and a vertical strip of size 4, respectively.}
 		\label{Fig_3}
 	\end{figure}
 	
 	\begin{corollary}[Pieri rule]\label{Pierirule}
 		Let $\lambda \vdash m+n$ and $\mu \vdash m$. Then
 		\[c_{\mu,(n)}^{\lambda}=
 		\begin{cases}
 			1 & \text{if } \lambda/\mu \text{ is a horizontal strip of size $n$}\\
 			0 & \text{otherwise}
 		\end{cases}.
 		\]
 	\end{corollary}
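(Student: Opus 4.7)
The plan is to specialize the Littlewood--Richardson rule (\Cref{the_LR_rule}) to the case $\nu=(n)$ and read off what the enumerative condition becomes. By the LR rule, $c^{\lambda}_{\mu,(n)}$ equals the number of SSYT of skew shape $\lambda/\mu$ and type $(n)$ whose reverse reading word is a lattice permutation. Since the type $(n)$ has a single part, any such SSYT uses only the label $1$, repeated exactly $n$ times.

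I would then analyze the two defining conditions of a SSYT in this degenerate case. The weak row-increase requirement is vacuous when only one symbol appears, but the strict column-increase requirement forces every column of $T_{\lambda/\mu}$ to contain at most one box; by definition this is exactly the statement that $\lambda/\mu$ is a horizontal strip. Consequently, if $\lambda/\mu$ is not a horizontal strip of size $n$, then no valid filling exists and $c^{\lambda}_{\mu,(n)}=0$.

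In the remaining case, the all-ones filling is the unique SSYT of shape $\lambda/\mu$ and type $(n)$, so it suffices to verify that it satisfies the lattice-permutation condition. Its reverse reading word is $11\cdots 1$ (of length $n$), and this is a lattice permutation because in every left factor the count of $1$'s is nonnegative while the count of $2$'s is $0$. Hence this single tableau contributes, giving $c^{\lambda}_{\mu,(n)}=1$. Since the argument is a mechanical specialization of \Cref{the_LR_rule}, no genuine obstacle arises; the only point deserving an explicit line is the (essentially trivial) check that a string of $1$'s always constitutes a lattice permutation.
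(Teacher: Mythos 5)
Your proposal is correct and matches the paper's intent exactly: the paper states the Pieri rule without proof as an immediate special case of the Littlewood--Richardson rule (\Cref{the_LR_rule}), and your argument --- that a type-$(n)$ filling uses only the label $1$, so strict column increase forces $\lambda/\mu$ to be a horizontal strip, and the all-ones reverse reading word is trivially a lattice permutation --- is precisely the standard specialization being invoked. No discrepancy with the paper.
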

 	
 	\begin{corollary}\label{Pierirule_1}
 		Let $\lambda \vdash m+n$ and $\mu \vdash m$,
 		\[c_{\mu,(1^n)}^{\lambda}=
 		\begin{cases}
 			1 & \text{if } \lambda/\mu \text{ is a vertical strip of size $n$}\\
 			0 & \text{otherwise}
 		\end{cases}.
 		\]
 	\end{corollary}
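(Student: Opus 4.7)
The plan is to apply the Littlewood-Richardson rule (\Cref{the_LR_rule}) with $\nu=(1^n)$ and enumerate the LR tableaux of shape $\lambda/\mu$ and type $(1^n)$ directly. The key observation I would establish first is that the only lattice permutation of length $n$ in which each of $1,2,\ldots,n$ appears exactly once is the identity word $1,2,3,\ldots,n$. This follows by a short induction on $j$: the first letter must be $1$, for otherwise the prefix of length $1$ contains some $k\ge 2$ but no copy of $k-1$; and assuming $a_1\cdots a_{j-1}=12\cdots(j-1)$, if $a_j=k>j$ then the prefix $a_1\cdots a_j$ contains one $k$ and zero $(k-1)$'s, violating the lattice condition. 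So $a_j=j$.

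As a consequence, the reverse reading word of any LR tableau of shape $\lambda/\mu$ and type $(1^n)$ is forced to be $1,2,\ldots,n$, and the filling is then completely determined: each box receives the integer corresponding to its position in the reverse reading traversal (top to bottom by rows, right to left within each row). The remaining step is to decide for which skew shapes this forced filling is a genuine SSYT. I would observe that if a row of $\lambda/\mu$ contains $k\ge 2$ boxes, then, since $\mu\subseteq\lambda$ makes such a row contiguous, the reverse reading visits these $k$ boxes consecutively from right to left, so reading that row left to right produces a strictly decreasing sequence of length $k$, contradicting the weakly-increasing-row condition of an SSYT. Hence a valid LR tableau can exist only when every row of $\lambda/\mu$ has at most one box, i.e., when $\lambda/\mu$ is a vertical strip.

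Conversely, when $\lambda/\mu$ is a vertical strip, the forced filling is automatically an SSYT: the row condition is vacuous (single entry per row), and for any column with boxes in rows $r_1<r_2<\cdots<r_s$ the reverse reading visits them in top-to-bottom order and hence assigns them strictly increasing labels, satisfying the column condition. This produces exactly one LR tableau when $\lambda/\mu$ is a vertical strip of size $n$ and none otherwise, which gives $c^{\lambda}_{\mu,(1^n)}=1$ in the vertical-strip case and $0$ otherwise, proving the corollary. There is no real obstacle; essentially all the content sits in the remark that the lattice-permutation constraint pins down the reverse reading word, after which the SSYT conditions translate cleanly into the vertical-strip condition.
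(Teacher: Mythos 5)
Your proof is correct and follows exactly the route the paper intends: the paper states this as an unproved corollary of the Littlewood--Richardson rule (\Cref{the_LR_rule}), and you supply the standard details — the lattice condition forces the reverse reading word of a type-$(1^n)$ tableau to be $12\cdots n$, after which the SSYT row and column conditions are equivalent to $\lambda/\mu$ being a vertical strip. No gaps.
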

 	
 	\noindent We also note the branching rule, which is nothing but the Pieri rule when $n=1$.
 	
 	\begin{theorem}[Branching rule]\label{branching_rule}
 		Let $\lambda \vdash n$. Then $\displaystyle \res^{S_n}_{S_{n-1}}\chi_{\lambda}=\sum_{\mu\in \lambda^{-}}\chi_{\mu}$, where $\lambda^{-}$ is the set of all partitions of $n-1$ whose Young diagram is obtained from $\lambda$ by deleting a box from $T_{\lambda}$.
 	\end{theorem}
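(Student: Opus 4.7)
The plan is to deduce the branching rule as a direct consequence of the Pieri rule (\Cref{Pierirule}), using the observation already recorded in the text that the branching rule is the Pieri rule with $n=1$. The required machinery is Frobenius reciprocity together with the symmetric-function dictionary developed in Section 2.

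First I would fix a partition $\mu \vdash n-1$ and compute the multiplicity $\langle \res^{S_n}_{S_{n-1}}\chi_{\lambda},\chi_{\mu}\rangle_{S_{n-1}}$. By Frobenius reciprocity this equals $\langle \chi_{\lambda},\ind^{S_n}_{S_{n-1}}\chi_{\mu}\rangle_{S_n}$. Identifying $S_{n-1}$ with the Young subgroup $S_{n-1}\times S_1\subset S_n$ and writing $\chi_{(1)}$ for the trivial character of $S_1$, this inner product becomes $\langle \chi_{\lambda},\ind^{S_n}_{S_{n-1}\times S_1}(\chi_{\mu}\tensor \chi_{(1)})\rangle_{S_n}$.

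Next I would apply \eqref{product_of_sym_interpret_repn} to obtain $\ch_n\bigl(\ind^{S_n}_{S_{n-1}\times S_1}(\chi_{\mu}\tensor\chi_{(1)})\bigr)=s_{\mu}\,s_{(1)}$. Since $\ch_n$ is an isometry with $\ch_n(\chi_{\lambda})=s_{\lambda}$, the multiplicity in question equals $\langle s_{\lambda},s_{\mu}\,s_{(1)}\rangle = c^{\lambda}_{\mu,(1)}$ by \eqref{LR_coeffcients}. Finally, \Cref{Pierirule} specialised to $n=1$ gives $c^{\lambda}_{\mu,(1)}=1$ precisely when $\lambda/\mu$ is a horizontal strip of size $1$, i.e.\ a single box, and $0$ otherwise. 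The $\mu$'s satisfying this are exactly the elements of $\lambda^-$, and summing over $\mu$ yields $\res^{S_n}_{S_{n-1}}\chi_{\lambda}=\sum_{\mu\in\lambda^-}\chi_{\mu}$.

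The argument is essentially mechanical once the Frobenius characteristic framework of Section 2 is in place, so I do not expect a genuine obstacle. The only points requiring a brief check are the harmless identification of $S_{n-1}$ with $S_{n-1}\times S_1\subset S_n$ so that \eqref{product_of_sym_interpret_repn} can be invoked, together with the triviality $\chi_{(1)}=\mathbbm{1}_{S_1}$ needed to rewrite $\ind^{S_n}_{S_{n-1}}\chi_{\mu}$ as an induction from a Young subgroup.
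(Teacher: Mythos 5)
Your proposal is correct and follows exactly the route the paper intends: the paper states the branching rule without proof, remarking only that it is the Pieri rule at $n=1$, and your argument simply makes this explicit by combining Frobenius reciprocity with the identity $c^{\lambda}_{\mu\nu}=\langle \res_{S_{m}\times S_{n}}^{S_{m+n}}\chi_{\lambda},\chi_{\mu}\tensor\chi_{\nu}\rangle$ already recorded in \eqref{LR_coeffcients}. There is nothing to add.
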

 
 	\noindent By Frobenius reciprocity, it follows that $\displaystyle \ind^{S_{n+1}}_{S_n} \chi_{\lambda}=\sum_{\mu\in \lambda^{+}}\chi_{\mu}$, where $\lambda^{+}$ is the set of all partitions of $n+1$ whose Young diagram is obtained from $\lambda$ by adding a box to $T_{\lambda}$.
 	\section{Character covering in Symmetric groups and Kronecker product}
 	
 	In this section, we discuss the main results known on the covering number of irreducible characters of the symmetric group. The well-known Kronecker product problem is naturally related. We discuss some results in this direction, which we will use in this article.
 	
 	\subsection{The Kronecker Product} 
 	
 	Let $\times$ denote the point-wise product of two class functions in $R(S_n)$; that is, if $f,g\in R(S_n)$, then $(f\times g)(x)=f(x)g(x)$ for all $x\in G$. We write $fg$ to mean $f\times g$. For $f,g \in \Lambda_n$, the Kronecker product of $f$ and $g$, denoted by $f*g$, is defined by
 	\begin{equation}
 		f*g:=\ch_n(\ch_n^{-1}f\ch_n^{-1}g).
 	\end{equation}
 	By definition, $*$ is commutative. Further, $f*h_n=h_n*f=f$ for all $f\in \Lambda_n$. For partitions $\mu,\nu\vdash n$, from the above definition, we observe that $s_{\mu}*s_{\nu}=\ch_n(\chi_{\mu}\chi_{\nu})$. Thus, if $\displaystyle \chi_{\mu}\chi_{\nu}=\sum_{\lambda \vdash n}g_{\mu\nu\lambda}\chi_{\lambda}$, then we have $\displaystyle s_{\mu}*s_{\nu}=\sum_{\lambda\vdash n}g_{\mu\nu\lambda}s_{\lambda}$. The coefficients $g_{\mu\nu\lambda}=\langle \chi_{\mu}\chi_{\nu},\chi_{\lambda} \rangle=\langle s_{\mu}*s_{\nu},s_{\lambda} \rangle$ are called Kronecker coefficients. As the irreducible characters of $S_n$ are integer-valued, it follows that $g_{\mu\nu\lambda}$ is invariant under any permutation of $\mu,\nu,\lambda$.  The Kronecker product problem asks for a combinatorial interpretation of the Kronecker coefficients and is regarded as one of the most important open problems in the theory of symmetric groups. There are many interesting results known in this direction (for example, see \cite{bo1,bo2,bb,bk,bl,bwz,dv,gwxz,gg,li,mu1,mu4,mu2,mu3,re1,re2, rw,ro,te,va}). \cite[Section 3]{gg} discusses some elementary properties of the Kronecker product. It is convenient to extend the definition of Kronecker product to $\Lambda$ by setting $f*g=0$ if $f\in \Lambda_m, g\in \Lambda_n$, and $m\neq n$. Now we mention some results on the Kronecker product, which will be required in the sequel. 
 	
 	\medskip
 	
 	It is easy to describe the decomposition of the internal tensor product of two permutation (partition) characters $\sigma_{\lambda}$ and $\sigma_{\mu}$ of $S_n$.
 	
 	\begin{definition}
 		Let $\lambda=(\lambda_1,\ldots,\lambda_l)\vdash n$ and $\mu=(\mu_1,\ldots,\mu_m)\vdash n$. A $\lambda\times \mu$ matrix is a $l\times m$ matrix with non-negative integer entries such that  the sum of the entries of the $i$-th row equals $\lambda_i$ for all $1\leq i\leq l$, and the sum of the entries of the $j$-th column equals $\mu_j$ for all $1\leq j\leq m$.
 	\end{definition}
 	
 	\begin{theorem}\label{Kronecker_product_complete_symmetric}
 		Let $\lambda,\mu\vdash n$. Then $\displaystyle h_{\lambda}*h_{\mu}=\sum_{A=(a_{ij})} \prod_{i,j}h_{a_{ij}}$ summed over all $\lambda \times \mu$ matrices $A$.
 	\end{theorem}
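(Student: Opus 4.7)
The plan is to interpret both sides representation-theoretically and match them via the orbit decomposition of a tensor product of permutation characters. Recall that $h_\lambda = \ch_n(\sigma_\lambda)$ with $\sigma_\lambda = \ind_{S_\lambda}^{S_n}\mathbbm{1}$, and more generally the multiplicativity statement \Cref{product_of_sym_interpret_repn} specialized to trivial characters shows that $\ch_n$ sends the trivial character induced from any Young subgroup $\prod_k S_{b_k}$ to $\prod_k h_{b_k}$. By the definition of the Kronecker product,
\[
 h_\lambda * h_\mu \;=\; \ch_n(\sigma_\lambda\cdot \sigma_\mu),
\]
so the theorem reduces to decomposing the internal tensor product $\sigma_\lambda \cdot \sigma_\mu$ as a sum of trivial characters induced from Young subgroups, one for each $\lambda\times\mu$ matrix.

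To carry this out, I would first realize $\sigma_\lambda$ as the permutation character of $S_n$ acting on the set $X_\lambda$ of ordered partitions $(B_1,\ldots,B_l)$ of $\{1,\ldots,n\}$ with $|B_i|=\lambda_i$; indeed, the stabilizer of the ``standard'' ordered partition $(\{1,\ldots,\lambda_1\},\{\lambda_1+1,\ldots,\lambda_1+\lambda_2\},\ldots)$ is exactly $S_\lambda$. Similarly, $\sigma_\mu$ is the permutation character on the analogous set $X_\mu$, and the pointwise product $\sigma_\lambda\cdot\sigma_\mu$ is the permutation character of $S_n$ on the Cartesian product $X_\lambda\times X_\mu$.

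The main step is then to parameterize the $S_n$-orbits on $X_\lambda\times X_\mu$. To each pair $((B_i),(C_j))$ associate its intersection matrix $A=(a_{ij})$ with $a_{ij}=|B_i\cap C_j|$; its row sums are the $\lambda_i$ and its column sums are the $\mu_j$, so $A$ is a $\lambda\times\mu$ matrix in the sense just defined. A direct verification (by building a common refinement) shows that two pairs are $S_n$-conjugate if and only if their intersection matrices coincide, and that every $\lambda\times\mu$ matrix occurs in this way. Moreover, the stabilizer of a pair with intersection matrix $A$ permutes each intersection cell $B_i\cap C_j$ independently of the others, so it is precisely the Young subgroup $S_A:=\prod_{i,j}S_{a_{ij}}$.

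Combining these observations with the standard fact that a permutation character is the sum of trivial characters induced from the point-stabilizers of a complete set of orbit representatives, we obtain
\[
 \sigma_\lambda\cdot\sigma_\mu \;=\; \sum_{A}\ind_{S_A}^{S_n}\mathbbm{1}_{S_A},
\]
with $A$ ranging over all $\lambda\times\mu$ matrices. Applying $\ch_n$ term-by-term and invoking the multiplicativity noted at the outset yields $h_\lambda * h_\mu = \sum_A \prod_{i,j} h_{a_{ij}}$, as required. The only substantive ingredient is the orbit-and-stabilizer analysis in the third paragraph; the rest is formal bookkeeping with the Frobenius characteristic. Equivalently, one could derive the same decomposition as a direct application of Mackey's formula to the double cosets $S_\lambda\backslash S_n/S_\mu$, which are classically in bijection with $\lambda\times\mu$ matrices via the very same intersection-matrix construction.
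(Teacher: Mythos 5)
Your proof is correct. The paper itself does not prove this theorem: it states it and refers to \cite[Theorem 10]{jk} for ``a simple representation theoretic proof,'' and the argument you give---identifying $\sigma_\lambda\sigma_\mu$ with the permutation character on $X_\lambda\times X_\mu$, classifying the $S_n$-orbits by intersection matrices with stabilizers $\prod_{i,j}S_{a_{ij}}$, and applying $\ch_n$---is precisely that standard proof (equivalently, Mackey's formula on the double cosets $S_\lambda\backslash S_n/S_\mu$), so your approach matches the one the paper relies on.
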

 	
 	\noindent Applying the inverse Frobenius characteristic function, we obtain
 	\begin{equation}
 		\sigma_{\lambda}\sigma_{\mu}=\displaystyle \sum_{A=(a_{ij})} \prod_{i,j}\sigma_{a_{ij}},
 	\end{equation}
 	where the product in the RHS is taken in the sense of \Cref{product_of_sym_interpret_repn}. A simple representation theoretic proof of the above can be found in \cite[Theorem 10]{jk}. The following basic result of Littlewood (\cite[Theorem 3]{li}) allows one to compute the Kronecker products in principle. 
 	
 	\begin{theorem}\cite[Proposition 3.2]{gg}\label{Kroneceker_coeff_LR_coeff}
 		Let $f_1,\ldots, f_k$ be homogeneous symmetric functions of degree $a_1,\ldots, a_k$, respectively. Let $A$ be any symmetric function of degree $a_1+a_2+\cdots+a_k$. Then
 		$$(f_1f_2\cdots f_k)*A=\sum_{\alpha^{(1)}\vdash a_1}\cdots\sum_{\alpha^{(k)}\vdash a_k} \langle s_{\alpha^{(1)}}\cdots s_{\alpha^{(k)}}, A\rangle (f_1*s_{\alpha^{(1)}})\cdots (f_k*s_{\alpha^{(k)}}). $$
 		In particular,
 		$$(h_{a_1}h_{a_2}\cdots h_{a_k})*A=\sum_{\alpha^{(1)}\vdash a_1}\cdots\sum_{\alpha^{(k)}\vdash a_k} \langle s_{\alpha^{(1)}}\cdots s_{\alpha^{(k)}}, A\rangle s_{\alpha^{(1)}}\cdots s_{\alpha^{(k)}}.$$
 	\end{theorem}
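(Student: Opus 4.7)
The plan is to pass from symmetric functions to characters via the Frobenius characteristic, establish the binary case $k=2$ by the standard projection formula, and then induct on $k$; the ``in particular'' clause will drop out because $h_n$ is the identity for $*$ on $\Lambda_n$.

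First I would translate the identity into character language. Set $N=a_1+\cdots+a_k$ and write $f_i=\ch_{a_i}(\chi_i)$ with $\chi_i\in R(S_{a_i})$, together with $\psi=\ch_N^{-1}(A)\in R(S_N)$. Iterating \Cref{product_of_sym_interpret_repn} gives $f_1\cdots f_k=\ch_N\bigl(\ind_{S_{a_1}\times\cdots\times S_{a_k}}^{S_N}\chi_1\tensor\cdots\tensor\chi_k\bigr)$, so the left-hand side becomes $\ch_N\bigl(\ind(\chi_1\tensor\cdots\tensor\chi_k)\cdot\psi\bigr)$. Likewise each factor $f_i*s_{\alpha^{(i)}}=\ch_{a_i}(\chi_i\chi_{\alpha^{(i)}})$, and the product of these over $i$ again corresponds, under $\ch_N$, to an induced external tensor. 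The theorem therefore reduces to the character identity
\[
\ind(\chi_1\tensor\cdots\tensor\chi_k)\cdot\psi=\sum_{\alpha^{(1)},\ldots,\alpha^{(k)}}\langle s_{\alpha^{(1)}}\cdots s_{\alpha^{(k)}},A\rangle\,\ind\bigl((\chi_1\chi_{\alpha^{(1)}})\tensor\cdots\tensor(\chi_k\chi_{\alpha^{(k)}})\bigr)
\]
inside $R(S_N)$.

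For the base case $k=2$ (with $m=a_1$, $n=a_2$) I would invoke the standard projection formula
\[
\ind_{S_m\times S_n}^{S_N}(\chi_1\tensor\chi_2)\cdot\psi=\ind_{S_m\times S_n}^{S_N}\bigl((\chi_1\tensor\chi_2)\cdot\res_{S_m\times S_n}^{S_N}\psi\bigr),
\]
valid for any class function $\psi$ on $S_N$. Applying Frobenius reciprocity together with \Cref{LR_coeffcients}, the restriction decomposes as $\res_{S_m\times S_n}^{S_N}\psi=\sum_{\alpha\vdash m,\,\beta\vdash n}\langle s_\alpha s_\beta,A\rangle\,\chi_\alpha\tensor\chi_\beta$, and the elementary identity $(\chi_1\tensor\chi_2)(\chi_\alpha\tensor\chi_\beta)=(\chi_1\chi_\alpha)\tensor(\chi_2\chi_\beta)$ in $R(S_m\times S_n)$ completes the $k=2$ case.

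For the inductive step, group the first $k-1$ factors, write $F=f_1\cdots f_{k-1}\in\Lambda_{a_1+\cdots+a_{k-1}}$, and apply the binary identity to $(F\cdot f_k)*A$. This expresses the product as a sum over $\gamma\vdash a_1+\cdots+a_{k-1}$ and $\alpha^{(k)}\vdash a_k$ of $\langle s_\gamma s_{\alpha^{(k)}},A\rangle\,(F*s_\gamma)(f_k*s_{\alpha^{(k)}})$; expanding $F*s_\gamma$ via the inductive hypothesis and collapsing the $\gamma$-sum by Schur orthonormality
\[
\sum_\gamma\langle s_{\alpha^{(1)}}\cdots s_{\alpha^{(k-1)}},s_\gamma\rangle\,\langle s_\gamma s_{\alpha^{(k)}},A\rangle=\langle s_{\alpha^{(1)}}\cdots s_{\alpha^{(k)}},A\rangle
\]
yields the general formula. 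The specialization $f_i=h_{a_i}$ then follows from $h_{a_i}*g=g$ for every $g\in\Lambda_{a_i}$, a fact already noted in the preamble to the theorem. The main obstacle is the projection-formula step and the bookkeeping of external tensors under $\ch$; once these are correctly set up, the remainder is a clean translation back and forth between $R(S_\bullet)$ and $\Lambda$.
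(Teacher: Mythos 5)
Your argument is correct. Note that the paper does not prove this statement at all: it is quoted verbatim from the literature (attributed to Littlewood, \cite[Theorem 3]{li}, in the form \cite[Proposition 3.2]{gg}), so there is no in-paper proof to compare against. Your route is the standard one for this result, and it is entirely self-contained relative to the paper's toolkit: the "projection formula" you invoke for the base case is exactly \Cref{induction_commutes_with_tensor}, the decomposition of $\res_{S_m\times S_n}^{S_{N}}\psi$ into $\sum_{\alpha,\beta}\langle s_\alpha s_\beta, A\rangle\,\chi_\alpha\tensor\chi_\beta$ follows by linearity from \Cref{LR_coeffcients} (and correctly so even when $\psi$ is only a virtual character, since the projection formula holds for arbitrary class functions, as you note), and the collapse of the $\gamma$-sum in the inductive step is just the Schur expansion $s_{\alpha^{(1)}}\cdots s_{\alpha^{(k-1)}}=\sum_\gamma\langle s_{\alpha^{(1)}}\cdots s_{\alpha^{(k-1)}},s_\gamma\rangle s_\gamma$ paired against $s_{\alpha^{(k)}}$ and $A$. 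The specialization to $f_i=h_{a_i}$ via $h_{a_i}*g=g$ is likewise exactly the fact recorded in the paper's preamble to the Kronecker product. No gaps.
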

 	
 	\begin{example}\label{standard_character_kronecker}
 		Let $\lambda\vdash n$ and consider the product $s_{\lambda}*s_{(n-1,1)}$. Using the previous result, $\displaystyle h_{n-1}h_1*s_{\lambda}=\sum_{\mu\vdash n-1}\sum_{\nu\vdash 1}\langle s_{\mu}s_{\nu}, s_{\lambda}\rangle s_{\mu}s_{\nu}=\sum_{\mu\vdash n-1}\langle s_{\mu}s_{1},s_{\lambda}\rangle s_{\mu}s_{1}$. Through a double application of the Pieri rule, we conclude that 
 		the above sum equals $\displaystyle \sum_{\mu \in \lambda^{-}}s_{\mu}s_1=\sum_{\nu\in \lambda^{\pm}}s_{\nu}$. Here, $\lambda^{-}$ denotes the set of partitions of $n-1$ that are obtained from $\lambda$ by removing a box from $T_{\lambda}$, and $\lambda^{\pm}$ denotes the set of all partitions of $n$ that are obtained from $\lambda$ by successive removal and addition of a box to $T_{\lambda}$. Since $h_{(n-1,1)}=s_n+s_{(n-1,1)}$, we obtain that $\displaystyle s_{\lambda}*s_{(n-1,1)}= \sum_{\mu\in \lambda^{\pm}}s_{\mu}-s_{\lambda}$. Thus, $\displaystyle \chi_{\lambda}\chi_{(n-1,1)}=\sum_{\mu\in \lambda^{\pm}}\chi_{\mu}-\chi_{\lambda}$. 
 	\end{example}

 	\subsection{Character covering number}
 	
 	Some basic facts on the products of characters can be found in \cite{ach} and \cite{mi}. We need the following lemmas, whose proofs are easy and hence omitted.
 	
 	\begin{lemma}\label{basic_lemma_1}
 		Let $G$ be a finite group, and $\chi_i$ and $\rho_i$ ($i=1,2$) be characters of $G$. Assume that $c(\chi_i)\subseteq c(\rho_i)$ for $i=1,2$. Then $c(\chi_1\chi_2)\subseteq c(\rho_1\rho_2)$.
 	\end{lemma}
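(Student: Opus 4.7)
The statement amounts to a simple observation about non-negative integer combinations of irreducibles, and the cleanest way I see to prove it is to reduce to the ``difference is a character'' situation by rescaling.

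My plan is first to unpack the hypothesis. Writing $\chi_i = \sum_{\alpha \in \irr(G)} a_\alpha^{(i)} \alpha$ and $\rho_i = \sum_{\alpha \in \irr(G)} b_\alpha^{(i)} \alpha$ with non-negative integer multiplicities, the assumption $c(\chi_i) \subseteq c(\rho_i)$ says precisely that $a_\alpha^{(i)} > 0$ forces $b_\alpha^{(i)} > 0$. Choose an integer $N$ large enough (for instance $N = \max_{\alpha,i} a_\alpha^{(i)}$) so that $N b_\alpha^{(i)} \geq a_\alpha^{(i)}$ for every irreducible $\alpha$ and every $i \in \{1,2\}$. Then $\psi_i := N\rho_i - \chi_i$ is itself a character (a non-negative integer combination of irreducibles), so $N\rho_i = \chi_i + \psi_i$.

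The second step is to expand the product. We compute
\[
 N^2\,\rho_1 \rho_2 = (\chi_1 + \psi_1)(\chi_2 + \psi_2) = \chi_1\chi_2 + \chi_1\psi_2 + \psi_1\chi_2 + \psi_1\psi_2.
\]
Since each of $\chi_1\psi_2$, $\psi_1\chi_2$, $\psi_1\psi_2$ is a character (as a product of characters), the right-hand side is a sum in which every irreducible constituent of $\chi_1\chi_2$ appears with a multiplicity at least as large as in $\chi_1\chi_2$. Therefore $c(\chi_1\chi_2) \subseteq c(N^2 \rho_1\rho_2) = c(\rho_1\rho_2)$, which is the conclusion.

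There is really no obstacle here: everything is formal manipulation of non-negative multiplicities, and the only mild point is recognizing that, while $\rho_i - \chi_i$ need not be a character, a suitable integer multiple of $\rho_i$ minus $\chi_i$ always is. If one prefers to bypass the rescaling entirely, one can argue directly: given $\gamma \in c(\chi_1\chi_2)$, pick $\alpha,\beta \in \irr(G)$ with $a_\alpha^{(1)} a_\beta^{(2)} \langle \alpha\beta,\gamma\rangle > 0$; then $b_\alpha^{(1)}, b_\beta^{(2)} > 0$ by hypothesis, so $\langle \rho_1\rho_2, \gamma\rangle \geq b_\alpha^{(1)} b_\beta^{(2)} \langle \alpha\beta,\gamma\rangle > 0$, giving $\gamma \in c(\rho_1\rho_2)$. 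Either version is short and self-contained, which matches the authors' remark that the proof is easy and omitted.
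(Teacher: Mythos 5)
Your proof is correct; note that the paper explicitly omits the proof of this lemma as easy, so there is no argument to compare against. Both of your versions work, and the direct one at the end (pick $\alpha,\beta$ with $a_\alpha^{(1)}a_\beta^{(2)}\langle\alpha\beta,\gamma\rangle>0$ and bound $\langle\rho_1\rho_2,\gamma\rangle$ from below) is the standard one-line argument the authors presumably had in mind; the rescaling detour, while valid, is unnecessary.
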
 
 
 	\begin{lemma}\label{basic_lemma_2}
 		Let $G$ be a finite group, and $\chi$ and $\rho$ be characters of $G$. 
 		\begin{enumerate}
 			\item If $c(\chi)=\irr(G)$, then $c(\chi\rho)=\irr(G)$.
 			
 			\item If $c(\chi)\subseteq c(\rho)$ and $c(\chi^i)=c(\rho^{i})$ for some $i\geq 1$, then $c(\chi^j)=c(\rho^j)$ for every $j\geq i$.
 		\end{enumerate}
 	\end{lemma}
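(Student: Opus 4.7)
For Part~(1), the plan is a one-line argument via Frobenius reciprocity: for any $\psi\in\irr(G)$,
\[
  \langle \chi\rho,\psi\rangle = \langle \chi,\overline{\rho}\,\psi\rangle,
\]
and the right-hand side is strictly positive because $\overline{\rho}\,\psi$ is a non-zero character (of degree $\rho(1)\psi(1)$) and every irreducible constituent of it lies in $c(\chi)=\irr(G)$. Hence $\psi\in c(\chi\rho)$, and since $\psi$ was arbitrary, $c(\chi\rho)=\irr(G)$.

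For Part~(2), the plan is induction on $j\ge i$, with the base case $j=i$ given. In the inductive step, assume $c(\chi^j)=c(\rho^j)$. The forward inclusion $c(\chi^{j+1})\subseteq c(\rho^{j+1})$ is a single application of Lemma~\ref{basic_lemma_1}. For the reverse, the key structural observation is that if $c(A)=c(B)$ then $c(CA)=c(CB)$ for every character $C$, since $c(CA)=\bigcup_{\psi\in c(A)}c(C\psi)$ depends only on $c(A)$. Using the induction hypothesis, this rewrites
\[
  c(\rho^{j+1})=c(\rho\cdot\rho^j)=c(\rho\cdot\chi^j)=\bigcup_{\xi\in c(\rho)}c(\xi\cdot\chi^j),
\]
so it suffices to show $c(\xi\cdot\chi^j)\subseteq c(\chi^{j+1})$ for every $\xi\in c(\rho)$.

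The case $\xi\in c(\chi)$ is immediate from Lemma~\ref{basic_lemma_1}. The crux is $\xi\in c(\rho)\setminus c(\chi)$ (which can only occur when $i\ge 2$, since $c(\chi)=c(\rho)$ is automatic when $i=1$). Here the plan is to absorb $\xi$ into $\chi^i$ using the base-case equality: iterating Lemma~\ref{basic_lemma_1} from $c(\chi)\subseteq c(\rho)$ gives $c(\chi^{i-1})\subseteq c(\rho^{i-1})$, and combining with $\xi\in c(\rho)$ yields
\[
  c(\xi\cdot\chi^{i-1})\subseteq c(\rho\cdot\rho^{i-1})=c(\rho^i)=c(\chi^i).
\]
Factoring $\xi\cdot\chi^j=(\xi\cdot\chi^{i-1})\cdot\chi^{j-i+1}$ and applying Lemma~\ref{basic_lemma_1} once more gives $c(\xi\cdot\chi^j)\subseteq c(\chi^i\cdot\chi^{j-i+1})=c(\chi^{j+1})$. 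The main subtlety is recognising that the base-case equality $c(\chi^i)=c(\rho^i)$ enters only through the final identification $c(\rho^i)=c(\chi^i)$ in the preceding display; without it, Lemma~\ref{basic_lemma_1} alone would yield only the forward inclusion, so the extra hypothesis is genuinely what allows the reverse direction to go through.
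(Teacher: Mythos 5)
Your proof is correct. The paper explicitly omits the proof of this lemma as ``easy,'' so there is no argument to compare against; your write-up is a complete and valid elaboration, with both the tensor-product adjunction $\langle \chi\rho,\psi\rangle=\langle\chi,\overline{\rho}\,\psi\rangle$ in Part (1) and the key absorption step $c(\xi\cdot\chi^{i-1})\subseteq c(\rho^i)=c(\chi^i)$ in Part (2) checking out, including the edge cases $i=1$ and $j=i$.
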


 	\noindent If $\chi$ and $\rho$ are characters of group $G$ such that $c(\chi)\subseteq c(\rho)$, \Cref{basic_lemma_1} implies that $\ccn(\rho;G)\leq \ccn(\chi;G)$. For a partition $\lambda \vdash n$, we conclude that $\ccn(\sigma_{\lambda};S_n)\leq \ccn(\chi_{\lambda};S_n)$. Moreover, since $\ccn(\chi_{\lambda};S_n)=\ccn(\chi_{\lambda'};S_n)$, it follows that $\ccn(\chi_{\lambda};S_n)\geq \max\{\ccn(\sigma_{\lambda};S_n),\ccn(\sigma_{\lambda'};S_n)\}$. Thus, it is interesting to compute the character covering number of $\sigma_{\lambda}$, for it will immediately furnish a lower bound for $\ccn(\chi_{\lambda};S_n)$. Moreover, we also have the following:
 	
 	\begin{lemma}\label{basic_lemma_3}
 		Let $\lambda\vdash n$. If $c(\sigma_{\lambda}^i)=c(\chi_{\lambda}^i)$ for some $i\leq \ccn(\sigma_{\lambda};S_n)$, then $\ccn(\sigma_{\lambda};S_n)=\ccn(\chi_{\lambda};S_n)$.
 	\end{lemma}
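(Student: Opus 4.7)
The plan is to deduce the equality from the already-stated \Cref{basic_lemma_1} and \Cref{basic_lemma_2} together with the observation in the paragraph preceding the statement. Set $s := \ccn(\sigma_{\lambda}; S_n)$. First I would record the two key containments/equalities: (a) by Young's rule (\Cref{Youngrule}), $K_{\lambda\lambda} = 1$, so $\chi_{\lambda}$ occurs in $\sigma_{\lambda}$; since $\chi_{\lambda}$ is irreducible, this gives $c(\chi_{\lambda}) = \{\chi_{\lambda}\} \subseteq c(\sigma_{\lambda})$; (b) by hypothesis, there exists $i \leq s$ with $c(\chi_{\lambda}^i) = c(\sigma_{\lambda}^i)$.

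Next I would apply the two halves of the argument. For the lower bound $\ccn(\chi_{\lambda}; S_n) \geq s$: this is already noted in the paragraph above the lemma, but for completeness, if $k = \ccn(\chi_{\lambda}; S_n)$, then iterating \Cref{basic_lemma_1} with $\chi_1 = \chi_2 = \chi_{\lambda}$ and $\rho_1 = \rho_2 = \sigma_{\lambda}$ yields $c(\chi_{\lambda}^k) \subseteq c(\sigma_{\lambda}^k)$, so $c(\sigma_{\lambda}^k) = \irr(S_n)$, whence $s \leq k$. For the upper bound $\ccn(\chi_{\lambda}; S_n) \leq s$: apply \Cref{basic_lemma_2}(2) with $\chi = \chi_{\lambda}$ and $\rho = \sigma_{\lambda}$, using the containment from (a) and the equality from (b), to conclude that $c(\chi_{\lambda}^j) = c(\sigma_{\lambda}^j)$ for every $j \geq i$. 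In particular, taking $j = s$ (which is $\geq i$ by hypothesis) yields $c(\chi_{\lambda}^s) = c(\sigma_{\lambda}^s) = \irr(S_n)$, so $\ccn(\chi_{\lambda}; S_n) \leq s$.

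Combining the two bounds gives $\ccn(\chi_{\lambda}; S_n) = s = \ccn(\sigma_{\lambda}; S_n)$, which is the desired statement. There is no real obstacle here: the lemma is a clean bookkeeping consequence of \Cref{basic_lemma_1}, \Cref{basic_lemma_2}(2), and the fact that $\chi_{\lambda}$ is a constituent of $\sigma_{\lambda}$. The only subtlety worth flagging is the hypothesis $i \leq \ccn(\sigma_{\lambda}; S_n)$, which is precisely what allows us to apply the propagation statement of \Cref{basic_lemma_2}(2) at the index $j = s$ to promote the equality of constituent sets from level $i$ to level $s$.
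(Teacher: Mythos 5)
Your proposal is correct and is essentially the paper's argument: the paper's proof is a one-line appeal to \Cref{basic_lemma_2}(2), with the lower bound $\ccn(\chi_{\lambda};S_n)\geq \ccn(\sigma_{\lambda};S_n)$ already recorded in the discussion preceding the lemma via \Cref{basic_lemma_1}. You have simply written out the same two steps in full detail, including the observation that $\chi_{\lambda}$ is a constituent of $\sigma_{\lambda}$, which the paper takes for granted.
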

 
 	\begin{proof}
 		The result follows immediately from \Cref{basic_lemma_2}(2).
 	\end{proof}
 
 	\Cref{Kronecker_product_complete_symmetric} provides us a recipe to compute $\ccn(\sigma_{\lambda};S_n)$. Indeed, we need to apply \Cref{Kronecker_product_complete_symmetric} repeatedly to find the least integer $k$ such that there exists a partition $\mu\vdash n$ with $\sigma_{\mu}\in c(\sigma_{\lambda}^{k-1})$ and that there exists a $\lambda\times \mu$ matrix with exactly $n$ number of entries to be 1 and all other entries 0. Notice that in this way we shall be able to find the least $k$ such that $\sigma_{(1^n)}$ (which is the regular character of $S_n$) appears as a constituent of $\sigma_{\lambda}^k$, which will imply that $\ccn(\sigma_{\lambda};S_n)=k$.
 	
 	\medskip
 	
 	Although it may not be easy to find  a closed formula for $\ccn(\sigma_{\lambda};S_n)$ for an arbitrary partition $\lambda$, in some cases it is not so difficult. Let $\mu(k)$ denote the hook partition $(n-k,1^k)$ of $n$.
 	
 	\begin{lemma}\label{covering_number_sigma_hook}
 		Let $1\leq k\leq n-2$. Then $\ccn(\sigma_{\mu(k)};S_n)=\lceil \frac{n-1}{k} \rceil$.
 	\end{lemma}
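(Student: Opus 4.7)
The plan is to reformulate the question as a combinatorial game on partitions via the Kronecker product of complete symmetric functions, and then bound the number of moves required.

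\emph{Reduction.} I would first observe that $c(\sigma_{\mu(k)}^m)=\irr(S_n)$ if and only if $\sigma_{(1^n)}$ appears in the $\sigma$-expansion of $\sigma_{\mu(k)}^m$. Indeed, \Cref{Youngrule} together with the fact that $K_{(1^n),\eta}>0$ only for $\eta=(1^n)$ shows that the sign character $\chi_{(1^n)}$ is a constituent of $\sigma_\eta$ precisely when $\eta=(1^n)$, while $\sigma_{(1^n)}$ is the regular character and hence contains every irreducible. So computing $\ccn(\sigma_{\mu(k)};S_n)$ amounts to detecting when $\sigma_{(1^n)}$ first appears.

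\emph{Combinatorial game.} Applying the inverse Frobenius characteristic to \Cref{Kronecker_product_complete_symmetric} yields $\sigma_\alpha\sigma_\beta=\sum_A\sigma_{\tau(A)}$, summed over $\alpha\times\beta$ matrices $A$, where $\tau(A)$ is the partition obtained by sorting the positive entries of $A$. Iterating with $\beta=\mu(k)$ gives that $\sigma_\eta$ appears in $\sigma_{\mu(k)}^m$ precisely when there is a chain $\mu(k)=\alpha_0,\alpha_1,\dots,\alpha_{m-1}=\eta$ in which each $\alpha_{i+1}$ is obtained from $\alpha_i$ by choosing non-negative integers $c_1,\dots,c_{\ell(\alpha_i)}$ with $c_j\leq(\alpha_i)_j$ and $\sum_j c_j=k$, and then taking $\alpha_{i+1}$ to be the sorted tuple of the positive values among $\{(\alpha_i)_j-c_j\}_j$ together with the $k$ extra ones coming from the singleton columns of $\mu(k)$. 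The computation of $\ccn(\sigma_{\mu(k)};S_n)$ therefore reduces to finding the smallest $m$ for which $(1^n)$ is reachable in $m-1$ such moves starting from $\mu(k)=(n-k,1^k)$.

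\emph{Upper bound.} I would exhibit the greedy sequence that assigns all $k$ marks to the largest row at each step: from $(n-jk,1^{jk})$ with $n-jk\geq k$, taking $c_1=k$ yields $(n-(j+1)k,1^{(j+1)k})$. Writing $n=qk+r$ with $0\leq r\leq k-1$, the cases $r\in\{0,1\}$ reach $(1^n)$ in exactly $q-1$ greedy moves; for $r\geq 2$ one needs one additional non-greedy move at the end, placing $r$ marks on the big row (destroying it) and the remaining $k-r$ marks on $k-r$ singleton rows (destroying those), which produces $(1^n)$. In every case the total number of moves is $\lceil(n-1)/k\rceil-1$, so $m\leq\lceil(n-1)/k\rceil$.

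\emph{Lower bound and conclusion.} I would track the length $\ell$ of the partition. A single move yields a new length equal to $|\{j:(\alpha_i)_j>c_j\}|+k\leq\ell(\alpha_i)+k$, so $\ell$ grows by at most $k$ per move. Starting from $\ell(\mu(k))=k+1$ and needing $\ell((1^n))=n$, at least $\lceil(n-k-1)/k\rceil=\lceil(n-1)/k\rceil-1$ moves are required, whence $m\geq\lceil(n-1)/k\rceil$. Combining the two bounds gives the stated equality. The only delicate point is the case analysis of the final move in the upper-bound construction when $r\geq 2$; everything else is routine from the matrix-move formalism.
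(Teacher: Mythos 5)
Your proof is correct and follows essentially the same route as the paper: both reduce the problem to reaching $(1^n)$ from $\mu(k)$ via the matrix moves coming from \Cref{Kronecker_product_complete_symmetric}, use the greedy allocation of all $k$ marks to the largest part for the upper bound, and bound the progress achievable in a single move for the lower bound. The only (immaterial) difference is that you track the number of parts, which grows by at most $k$ per move, while the paper tracks the first part, which shrinks by at most $k$ per move; your explicit handling of the final move when $r\geq 2$ is in fact more careful than the paper's one-line upper bound.
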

 	
 	\begin{proof}
 		Notice that when $i\geq 2$, $\sigma_{\mu(k)}^i$ has $\sigma_{\mu(n-ki)}$ as a constituent. This shows that $\ccn(\sigma_{\mu(k)})\leq \lceil \frac{n-1}{k} \rceil$. Since we are considering at each iteration a $\lambda \times \mu(k)$ matrix, starting with $\lambda=\mu(k)$,  it follows that the first part of $\lambda$ can be reduced by $k$ at the most in each iteration. This yields that $\ccn(\sigma_{\mu(k)})\geq \lceil \frac{n-1}{k} \rceil$.
 	\end{proof}
 	
 	\noindent Let $\lambda=(n-k,k)$ $(1\leq k\leq \lfloor \frac{n}{2} \rfloor)$ be a two-row partition of $n$. Clearly, $\max \{l(\mu) \mid \sigma_{\mu}\in c(\sigma_{\lambda}^i)\}\leq 2^i$. By the discussion before \Cref{covering_number_sigma_hook}, $\ccn(\sigma_{\lambda};S_n)$ is the least integer $k$ such that $\sigma_{(1^n)}\in \sigma_{\lambda}^k$, whence it follows that $2^k\geq n$. This implies that $\ccn(\sigma_{\lambda};S_n)\geq \lceil \log_2 n  \rceil$. We also have the following:
 	\begin{lemma}\label{upperbound_covering_number_sigma_tworow}
 		Let $1\leq k\leq \lfloor \frac{n}{2} \rfloor$. Then $\ccn(\sigma_{(n-k,k)};S_n)\geq \left\lceil \frac{2(n-1)}{k+1}\right\rceil$.
 	\end{lemma}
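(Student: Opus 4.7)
The plan is to exhibit a non-negative potential function $F$ on partitions of $n$ that can drop by at most $k+1$ under any single application of \Cref{Kronecker_product_complete_symmetric}. By the discussion preceding the statement of the lemma, $\ccn(\sigma_{(n-k,k)};S_n)$ equals the smallest length $c$ of a chain $\lambda=\mu^{(1)}\to\mu^{(2)}\to\cdots\to\mu^{(c)}=(1^n)$ in which each arrow $\mu^{(j)}\to\mu^{(j+1)}$ records the sorted multiset of nonzero entries of some $(n-k,k)\times\mu^{(j)}$ matrix; controlling $F$ along such a chain will yield the desired lower bound on $c$.

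Define $F(\mu):=(\mu_1-1)+(n-l(\mu))$. This is non-negative, vanishes only at $(1^n)$, and a direct calculation gives $F((n-k,k))=2n-k-3$.

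The heart of the argument is the single-step inequality: if $\mu\to\mu'$ is witnessed by a $(n-k,k)\times\mu$ matrix $A=(a_{ij})$, then $F(\mu)-F(\mu')\leq k+1$. Set $t:=a_{21}$ and let $p$ be the number of columns $j$ with $a_{1j}\geq 1$ and $a_{2j}\geq 1$. Then (i)~$\mu_1'\geq a_{11}=\mu_1-t$, so $\mu_1-\mu_1'\leq t$; (ii)~since each column sum $\mu_j\geq 1$ forces at least one nonzero entry per column, the total number of nonzero entries of $A$ is $l(\mu)+p$, so $l(\mu')-l(\mu)=p$; and (iii)~$t+p\leq k+1$, because when $t\geq 1$ and $\mu_1>t$ column $1$ itself contributes to $p$ and the other $p-1$ such columns each contribute $\geq 1$ to $\sum_{j\geq 2}a_{2j}=k-t$, while the corner cases $t=0$ and $\mu_1=t$ give $p+t\leq k$ by the same row-$2$ counting. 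Combining,
\[
F(\mu)-F(\mu')=(\mu_1-\mu_1')+(l(\mu')-l(\mu))\leq t+p\leq k+1.
\]

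Applying this bound across the $c-1$ arrows of a chain from $(n-k,k)$ to $(1^n)$ and using $F((1^n))=0$ yields $(c-1)(k+1)\geq 2n-k-3$, so $c\geq (2n-2)/(k+1)=2(n-1)/(k+1)$. Since $c$ is an integer, $c\geq\lceil 2(n-1)/(k+1)\rceil$, as claimed. The main obstacle is choosing the right potential: the two apparent ways to make progress in a single step (shrink $\mu_1$ aggressively by pushing mass to row $2$ of column $1$, or grow $l(\mu)$ aggressively by spreading row $2$ over many columns) are directly antagonistic, and the clean bound $t+p\leq k+1$ is exactly what allows a single linear combination of $\mu_1$ and $l(\mu)$ to capture both mechanisms simultaneously.
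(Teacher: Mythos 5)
Your proof is correct and is essentially the paper's argument: the paper tracks the same two quantities (the cumulative decrement of the largest part plus the cumulative increment of the number of parts, totalling $2(n-1)$ along a chain from $(n)$ to $(1^n)$) and bounds their joint change at each step by $k+1$ via exactly your $t$ / $k-t+1$ split. Your packaging as the potential $F(\mu)=(\mu_1-1)+(n-l(\mu))$ with the chain started at $(n-k,k)$, and the explicit case analysis proving $t+p\leq k+1$, is a cleaner write-up of the same idea.
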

	 
	\begin{proof}
	 	Set $\lambda=(n-k,k)$. Let $(n)=\alpha_0\to \alpha_1 \to \cdots \to \alpha_r=(1^n)$ be a sequence of partitions of $n$ such that $\alpha_{i}$ is obtained by arranging the entries of a $\alpha_{i-1}\times \lambda$ matrix (which is of dimension $l(\alpha_{i-1})\times 2$) in weakly decreasing order. This implies that $\sigma_{(1^n)}\in c(\sigma_{\lambda}^{r})$.  For $1\leq i\leq r$, let $a_i=l(\alpha_i)-l(\alpha_{i-1})$ and $b_{i}=l(\alpha_{i-1}')-l(\alpha_i')$. Since $a_i$ is the increment in the number of parts of $\alpha_i$ and $b_i$ is the decrement of the largest part of $\alpha_i$, we conclude that $\sum_{i=1}^{r}a_i+b_i=2(n-1)$. Now going from the $(i-1)$-th step to the $i$-th step, if we decrease any one of the largest parts (at the $(i-1)$-th step) by $j$, where $0\leq j\leq k$, then the decrement in the largest part is at most $j$, that is, $a_i\leq j$. Also, the maximum possible increment in the number of parts is $k-j+1$, that is, $b_i\leq k-j+1$. This shows that $a_i+b_i\leq k+1$. Hence, $\sum_{i=1}^{r}a_i+b_i\leq r(k+1)\implies r\geq \frac{2(n-1)}{k+1}$. Since $\ccn(\sigma_{\lambda};S_n)$ is obtained by taking the minimum length of all the above kinds of sequences of partitions, our result follows.
	\end{proof}
 	
 	\noindent The above bound is quite crude in general (see proof of \Cref{Theorem_2}). The next proposition shows that in certain cases, the above lower bound is in fact an equality. 
 	\begin{proposition}\label{covering_number_sigma_tworow_specialcase}
 		Let $n\geq 5$ and $1\leq k\leq \sqrt{n}$. Then $\ccn(\sigma_{(n-k,k)};S_n)=\left\lceil \frac{2(n-1)}{k+1}\right\rceil$.
 	\end{proposition}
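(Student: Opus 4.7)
The lower bound $\ccn(\sigma_{(n-k,k)}; S_n) \ge \lceil 2(n-1)/(k+1)\rceil$ is \Cref{upperbound_covering_number_sigma_tworow}. My plan is to prove the matching upper bound by exhibiting an explicit sequence $(n) = \alpha_0, \alpha_1, \ldots, \alpha_r = (1^n)$ with $r = \lceil 2(n-1)/(k+1)\rceil$ in which each $\alpha_i$ is the weakly decreasing rearrangement of the nonzero entries of some $\alpha_{i-1}\times(n-k,k)$ matrix. By the recipe described after \Cref{covering_number_sigma_hook} (which uses \Cref{Kronecker_product_complete_symmetric}), this yields $\sigma_{(1^n)}\in c(\sigma_{(n-k,k)}^r)$, and since $\sigma_{(1^n)}$ is the regular character of $S_n$ we obtain $c(\sigma_{(n-k,k)}^r)=\irr(S_n)$, as required.

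The construction proceeds in two main phases, each step attaining the optimal gain $a_i+b_i=k+1$ in the notation of \Cref{upperbound_covering_number_sigma_tworow}. Phase~1 runs for $k-1$ steps of ``Strategy~A'': for $1\le i\le k-1$ set $\alpha_i := (n-ik,\,k^i)$, realized by the matrix with first row $(n-ik,\,k)$ and remaining rows $(k,0)$. Because $k\le \sqrt{n}$ forces $n-(k-1)k\ge k$, each step decreases the top part by exactly $k$ and introduces a new part $k$, giving $(a_i,b_i)=(1,k)$. Phase~2 applies ``Strategy~B'' to $\alpha_{k-1}=(n-k^2+k,\,k^{k-1})$ and its successors: select the first $k$ rows (all with parts $\ge k\ge 2$) and set the column-2 entry $y_j=1$ in each. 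This produces $\alpha_{k+j} = (n-k^2+k-1-j,\,(k-1-j)^{k-1},\,1^{(j+1)k})$ for $j=0,1,\ldots,k-2$, each with $(a_i,b_i)=(k,1)$, bringing us after $2k-2$ steps to $\alpha_{2k-2}=(m+1,\,1^{k^2-1})$, where $m := n-k^2 \ge 0$.

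If $m=0$, then $\alpha_{2k-2}=(1^n)$ and we are done in $r=2k-2$ steps. For $m>0$, the partition $(m+1,1^{k^2-1})$ is a bottleneck from which $(k+1)$-gain steps are no longer available, so we modify the late stretch of Phase~2 by replacing one or more Strategy-B matrices with a ``hybrid'' variant in which $y_1\ge 2$ is paired with $y_j=1$ on a carefully chosen subset of secondary rows; the purpose is to preserve enough parts of size $\ge 2$ beyond the first, so that subsequent steps still attain gain $k+1$ (aside from the very last step, whose gain is determined by the residue). A case analysis on $2m \pmod{k+1}$ produces an explicit such modification using exactly $\lceil 2m/(k+1)\rceil$ extra steps, giving the grand total $(2k-2)+\lceil 2m/(k+1)\rceil=\lceil 2(n-1)/(k+1)\rceil$.

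The main obstacle is the endgame bookkeeping: one has to check case-by-case that each hybrid matrix is realizable (nonnegative integer entries with row sums equal to the parts of the current $\alpha_i$ and column sums $(n-k,k)$) and that the step-count accounting matches the ceiling on the nose. The hypothesis $k\le\sqrt{n}$ is used throughout: it gives $n\ge k^2$ so Phase~1 completes, guarantees enough parts of size $\ge 2$ in Phase~2 for Strategy~B to apply, and ensures that the target endgame partitions $(2^k,1^{n-2k})$ and the like are genuine partitions of $n$.
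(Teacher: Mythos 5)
Your framework is the right one and matches the paper's in spirit: the lower bound is \Cref{upperbound_covering_number_sigma_tworow}, and the upper bound is to be obtained from an explicit chain $(n)=\alpha_0\to\cdots\to\alpha_r=(1^n)$ of $\alpha_{i-1}\times(n-k,k)$ matrix steps, with the step count controlled by the potential $\sum_i(a_i+b_i)=2(n-1)$ and the per-step bound $a_i+b_i\le k+1$. Your arithmetic $(2k-2)(k+1)+2m=2(n-1)$ with $m=n-k^2$ is also correct. The gap is that everything after ``$m>0$'' is asserted rather than proved, and that assertion is the entire content of the proposition. Your unmodified schedule provably misses the bound: take $n=27$, $k=5$. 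Phase 1 and Phase 2 give $\alpha_8=(3,1^{24})$, and from $(3,1^{24})$ every admissible matrix yields at best $(2,1^{25})$ (the single non-unit row can only split into two pieces, so $a\le 1$ and $b\le 1$), forcing $10$ steps against the target $\lceil 52/6\rceil=9$. So the ``hybrid'' modification is not a cosmetic fix to the last step; it must be woven into the schedule early enough to keep several parts of size $\ge 2$ alive (here, e.g., routing $\alpha_7=(4,2^4,1^{15})\to(2^3,1^{21})\to(1^{27})$ recovers the count). You have named neither the hybrid matrices nor the invariant showing they land on $(1^n)$ at exactly step $\lceil 2(n-1)/(k+1)\rceil$; ``a case analysis on $2m\pmod{k+1}$ produces an explicit such modification'' is a promissory note where the proof should be.

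For comparison, the paper avoids the bottleneck by balancing the two phases differently: its first phase splits off a part of size $k$ for $b=\lfloor(n-1)/(k+1)\rfloor$ steps (not $k-1$ steps), reaching $(b+c+1,k^b)$, and the hypothesis $k\le\sqrt n$ enters only to guarantee $b\ge k-1$, i.e., that there are always at least $k-1$ non-unit parts available to decrement in the second phase. The second phase then decrements the residual top part by $1$ and $k-1$ of the largest remaining parts by $1$ at each step, and the technical heart of the proof is the verification -- via the multiset $S_i$ and the formula $t_i=\lceil(kb-i(k-1))/b\rceil$ for its maximum -- that the top part and the stock of $k$'s bottom out at $1$ simultaneously after $b$ (or $b+1$) further steps, with the residue $c$ versus $\lceil k/2\rceil$ deciding which. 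That synchronization argument is exactly the piece your proposal is missing, and I see no shortcut around supplying an equivalent of it.
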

 
 	\begin{proof}
 		We construct a sequence of partitions as in the previous proof: $(n)=\alpha_0\to \alpha_1 \to \cdots \to \alpha_r=(1^n)$, where $r=\left\lceil \frac{2(n-1)}{k+1}\right\rceil$. This will prove that $\ccn(\sigma_{\lambda})\leq \left\lceil \frac{2(n-1)}{k+1}\right\rceil$, and then the proof follows from the previous lemma.  We break the proof into two cases.
 		
 		\vspace{2 mm}
 		
 		\textbf{Case 1}: Assume that $k+1\mid n-1$. Set $n-1=(k+1)b$ where $b$ is a positive integer. Then $n=bk+b+1$. Set $\alpha_i=(n-ik,k^{i})$ when $1\leq i\leq b$. Thus, $\alpha_b=(b+1,k^b)$. Since $k\leq \sqrt{n}$, $k^2\leq bk+b+1 \implies (k+1)(k-1)\leq b(k+1) \implies b\geq k-1$. Now we construct the partition  $\alpha_{b+i}$ from $\alpha_{b+i-1}$ by breaking $b-i+1$ into $b-i$ and $1$ and breaking each of the largest remaining $k-1$ parts of $\alpha_{b+i-1}$ (say $(l_1,l_2,\ldots,l_{k-1})$) into $l_j-1$ and $1$ for every $1\leq j\leq k-1$. By our algorithm, $\alpha_{r}=(1^n)$ implies that $r\geq 2b$. We show that $\alpha_{2b}=(1^n)$. To show this, we concentrate on the part of the algorithm that applies to $(\underbrace{k,k,\ldots,k}_{\text{ $b$ times}})$. For $i\geq 0$, define a  multi-set $S_i$ inductively as follows: $S_0$ consists of $b$ many $k$'s. $S_1$ consists of all those numbers that appear when the algorithm is applied to $S_0$ except for the $k-1$ many $1$'s that have been produced (these are the 1's that arise from $l_j$). In general, $S_i$ consists of all those numbers that appear when the algorithm is applied to $S_{i-1}$, except for the $k-1$ many $1$'s that have been produced. Let $i_0$ be the first $i$ such that $S_i$ consists of some $1$'s and possibly some $0$'s. Thus, $i_0$ is precisely the step where the $b$ many $k$'s have been broken down to all 1's, that is, $kb$-many 1's. We claim that $i_0=b$. In order to establish our claim, we  make two observations on the multi-set $S_i$: (i) $|S_i|=b$ and (ii) if $t_i$ is the maximum number present in $S_i$, then $S_i$ consists only of $t_i$'s and possibly $(t_i-1)$'s. Both of the properties follow from the definition of our algorithm. It follows that $t_i-1<\frac{kb-i(k-1)}{b}\leq t_i$, whence $t_i=\lceil \frac{kb-i(k-1)}{b} \rceil$. Since $b\leq k-1$, we get that  $t_{b-1}=2$ and $t_b=1$. This establishes our claim. Since by performing $b$-many iterations after $\alpha_b$, the number $b+1$ has been broken down to all $1$'s as well, we conclude that $\alpha_{2b}=(1^n)$ as desired.


 		\vspace{2 mm}
 		
 		\textbf{Case 2}: Assume $k+1\nmid n-1$. Write $n-1=(k+1)b+c$ where $0<c\leq k$. Then $n=bk+b+c+1$. Set $\alpha_i=(n-ik,k^i)$ when $1\leq i\leq b$. Thus, $\alpha_b=(b+c+1,k^b)$. Since $k\leq \sqrt{n}$, $k^2\leq bk+b+c+1 \implies (k-1)(k+1)\leq b(k+1)+c \implies k-1\leq b+\frac{c}{k+1}$. We conclude that $b\geq k-1$ since $c\leq k$. Let $\alpha_{b+1}=(b+1,k^{b-k+c}, (k-1)^{k-c},c,1^{k-c})$. Now for $i\geq 1$, $\alpha_{b+1+i}$ is constructed from $\alpha_{b+i}$ by breaking $b-i+2$ into $b-i+1$ and $1$, and breaking each of the largest remaining $k-1$ parts of $\alpha_{b+i}$ (say $l_j$) into $l_j-1$ and $1$. By our algorithm, $\alpha_r=(1^n)$ implies that $m\geq 2b+1$. Now, to find the exact value of $m$, we argue as in the previous case. For convenience, we set $\beta_{i}:=\alpha_{b+i+1}$, where $i\geq 0$. Note that $\lceil\frac{2(n-1)}{k+1}\rceil$ is $2b+1$ if $0<c\leq\lceil \frac{k}{2} \rceil$, and is $2b+2$ if $\lceil \frac{k}{2} \rceil<c\leq k$. Thus, it is enough to show that (i) $\beta_{b}=(1^n)$ if $0<c\leq\lceil \frac{k}{2} \rceil$, and (ii) $\beta_{b}\neq (1^n)$ but $\beta_{b+1}=(1^n)$ if $\lceil \frac{k}{2} \rceil<c\leq k$. To show this, we use the same method as in the previous case. We concentrate on the part of the algorithm that applies to $(k^{b-k+c},(k-1)^{k-c},c)$. For $i\geq 0$, define a  multi-set $S_i$ inductively as follows: $S_0$ consists of $b-k+c$ many $k$'s, $k-c$ many $(k-1)$'s, and $c$. $S_1$ consists of all those numbers that appear when the algorithm is applied to $S_0$ except for the $k-1$ many $1$'s that have been produced (these are the 1's that arise from $l_j$). In general, $S_i$ consists of all those numbers that appear when the algorithm is applied to $S_{i-1}$, except for the $k-1$ many $1$'s that have been produced. Let $i_0$ be the first $i$ such that $S_i$ consists of some $1$'s and possibly some $0$'s. Thus, $i_0$ is precisely the step where the $(k^{b-k+c},(k-1)^{k-c},c)$ have been broken down to all 1's, that is, $(kb-k+c)$-many 1's. We claim that $i_0=b$ if $0<c\leq\lceil \frac{k}{2} \rceil$, and $i_0=b+1$ if $\lceil \frac{k}{2} \rceil<c\leq k$. We find the average of the numbers in the set $S_i$ when $1\leq i\leq i_0$. Clearly, since $|S_i|=b+1$ for every $i$, it is easy to see that the average is equal to $\frac{(kb-k+2c)-i(k-1)}{b+1}$. Also, by the definition of our algorithm, it follows that there exists $z$ such that  $1\leq z<i_0$, and $S_z$ consists entirely of the numbers $c+1$ and $c$. Thus, by the same argument as in the previous case, we conclude that  the maximum number present in  $S_i$ is $\lceil \frac{(kb-k+2c)-i(k-1)}{b+1} \rceil$, where $z\leq i\leq i_0$.  $\lceil \frac{(kb-k+2c)-i(k-1)}{b+1} \rceil=\lceil\frac{b+2c-k}{b+1}\rceil$ when $i=b$. Clearly, $\lceil\frac{b+2c-k}{b+1}\rceil$ is 1 when $0< c\leq \lceil \frac{k}{2} \rceil$ and it takes the value 2 if $\lceil \frac{k}{2} \rceil<c\leq k$. In the latter case, $\lceil \frac{(kb-k+2c)-i(k-1)}{b+1} \rceil=1$ when $i=b+1$. This establishes our claim. Finally, starting from $\beta_0$, since the number $b+1$ has been broken down to all $1$'s at the $b$-th step, we conclude that $\beta_{b}=(1^n)$ when $0<c\leq\lceil \frac{k}{2} \rceil$ and $\beta_{b+1}=(1^n)$ when $\lceil \frac{k}{2} \rceil<c\leq k$. This completes the proof.
 		
 	\end{proof}
 	
 	\noindent For clarity, we work out some examples to demonstrate the algorithm stated in the previous proof.
 
 	\begin{example}
 		Let $\lambda=(18,3)\vdash 21$. Then $n-1=(k+1)b$, where $b=5$. We apply the algorithm demonstrated in case 1 of the proof above. We obtain the following sequence: $\alpha_0=(21)\to \alpha_1=(18,3)\to \alpha_2=(15,3^2)\to \alpha_3=(12,3^3)\to \alpha_4=(9,3^4)\to \alpha_5=(6,3^5)\to \alpha_6=(5,3,3,3,2,2,1^3)\to \alpha_7=(4,3,2,2,2,2,1^6)\to \alpha_8=(3,2,2,2,2,1^{10})\to \alpha_9=(2,2,2,1^{15})\to \alpha_{10}=(1^{20})$.
 	\end{example}

 	\begin{example}
 		Let $\lambda=(40,6)\vdash 46$. Then $n-1=(k+1)b+c$, where $b=6,c=3$. Note that $c\leq \frac{k+1}{2}$. We apply the algorithm demonstrated in case 2 of the proof above. We obtain the following sequence: $\alpha_0=(46)\to \alpha_1=(40,6)\to \alpha_2=(34,6^2)\to \alpha_3=(28,6^3)\to \alpha_4=(22,6^4)\to \alpha_5=(16,6^5)\to \alpha_6=(10,6^6)\to \alpha_7=(7,6^3,5^3,3,1^3)\to \alpha_8=(6,5^4,4^2,3,1^9)\to \alpha_9=(5,4^5,3^2,1^{15})\to \alpha_{10}=(4,3^7.1^{21})\to \alpha_{11}=(3^3,2^5,1^{27})\to \alpha_{12}=(2^5,1^{36})\to \alpha_{13}=(1^{46})$.
 	\end{example}
 	
 	\begin{example}
 		Let $\lambda=(61,8)\vdash 69$. Then $n-1=(k+1)b+c$, where $b=7,c=5$. Note that $c\geq \frac{k+1}{2}$. We once again apply the algorithm demonstrated in case 2 of the proof above. We obtain the following sequence: $\alpha_0=(69)\to \alpha_1=(61,8)\to \alpha_2=(53,8^2)\to \alpha_3=(45,8^3)\to \alpha_4=(37,8^4)\to \alpha_5=(29,8^5)\to \alpha_6=(21,8^6)\to \alpha_7=(13,8^7) \to \alpha_8=(8,8^4,7^3,5,1^3)\to \alpha_9=(7,7^4,6^3,3,1^{11})\to \alpha_{10}=(6,6^4,5^4,1^{19})\to \alpha_{11}=(5,5^5,4^3,1^{27})\to \alpha_{12}=(4,4^6,3^2,1^{35})\to \alpha_{13}=(3,3^7,2,1^{43})\to \alpha_{14}=(2,2^7,2,1^{51})\to \alpha_{15}=(2,1^{67})\to \alpha_{16}=(1^{69})$.
 	\end{example}
 	As mentioned earlier, Miller proved that $\ccn(S_n)$ is $n-1$. Using \Cref{Kronecker_product_complete_symmetric} and \Cref{standard_character_kronecker}, it is easy to see that $c(\sigma_{\mu(1)}^2)=c(\chi_{(n-1,1)}^2)$. Then \Cref{basic_lemma_3} and \Cref{covering_number_sigma_hook} at once imply that $\ccn(\chi_{(n-1,1)})=n-1$. Since $\ccn(\chi_{\lambda})=\ccn(\chi_{\lambda'})$, it follows that $\ccn(\chi_{(2,1^{n-2})})=n-1$. Thus, we get $\ccn(S_n)\geq n-1$. Miller's proof that $\ccn(S_n)\leq n-1$ (in other words, $\ccn(\chi_{\lambda};S_n)\leq n-1$ for every $\lambda\neq (n),(1^n)$) is based on the following two lemmas:

 	\begin{lemma}\cite[Lemma 11]{mi}\label{miller_lemma_1}
 		Let $\lambda$ be a non-rectangular partition of $n$. Then $c(\sigma_{\mu(2)})\subseteq c(\chi_{\lambda}^2)$.
 	\end{lemma}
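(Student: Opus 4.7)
The first step is to identify $c(\sigma_{\mu(2)})$. By Young's rule (\Cref{Youngrule}), $K_{\lambda,(n-2,1^2)}>0$ exactly for the partitions $\lambda \in \{(n),(n-1,1),(n-2,2),(n-2,1^2)\}$ dominating $\mu(2)$, so it suffices to show each of $\chi_{(n)}, \chi_{(n-1,1)}, \chi_{(n-2,2)}, \chi_{(n-2,1^2)}$ is a constituent of $\chi_\lambda^2$ whenever $\lambda$ is non-rectangular. The first is immediate from $\langle \chi_\lambda^2, \chi_{(n)}\rangle = \langle \chi_\lambda,\chi_\lambda\rangle = 1$. For the second, \Cref{standard_character_kronecker} (read as a multiset sum, in which $\lambda$ itself appears once for each removable corner of $\lambda$) gives
\begin{equation*}
\langle \chi_\lambda^2, \chi_{(n-1,1)}\rangle = \langle \chi_\lambda, \chi_\lambda\chi_{(n-1,1)}\rangle = d(\lambda) - 1,
\end{equation*}
where $d(\lambda)$ denotes the number of corners of $\lambda$; non-rectangularity of $\lambda$ is precisely $d(\lambda) \geq 2$.

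For the remaining two constituents I would pass through the permutation characters. Frobenius reciprocity combined with the LR rule (\Cref{the_LR_rule}) and the Pieri rules (\Cref{Pierirule}, \Cref{Pierirule_1}) yields
\begin{equation*}
\langle \chi_\lambda^2, \sigma_{(n-2,2)}\rangle = \langle \res^{S_n}_{S_{n-2}\times S_2}\chi_\lambda,\res^{S_n}_{S_{n-2}\times S_2}\chi_\lambda\rangle = \sum_{\mu\vdash n-2,\,\nu\vdash 2}(c^\lambda_{\mu\nu})^2 = h(\lambda) + v(\lambda),
\end{equation*}
where $h(\lambda)$ and $v(\lambda)$ count partitions $\mu \vdash n-2$ such that $\lambda/\mu$ is a horizontal, resp.\ vertical, strip of size $2$. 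Similarly $\langle \chi_\lambda^2, \sigma_{(n-2,1^2)}\rangle = \sum_{\mu\subseteq \lambda,\,\mu\vdash n-2}(f^{\lambda/\mu})^2$, where $f^{\lambda/\mu}$ is the number of SYT of shape $\lambda/\mu$. Classify such $\mu$ by whether $\lambda/\mu$ is a horizontal domino ($A$ of them), vertical domino ($B$ of them), or two disconnected boxes ($C$ of them): then $h = A + C$, $v = B + C$, and $\sum (f^{\lambda/\mu})^2 = A + B + 4C$ (disconnected skew shapes carry two SYT). Using $\sigma_{(n-2,2)} = \chi_{(n)}+\chi_{(n-1,1)}+\chi_{(n-2,2)}$ and $\sigma_{(n-2,1^2)} = \chi_{(n)}+2\chi_{(n-1,1)}+\chi_{(n-2,2)}+\chi_{(n-2,1^2)}$ to invert, we obtain
\begin{equation*}
\langle \chi_\lambda^2, \chi_{(n-2,2)}\rangle = A + B + 2C - d(\lambda), \qquad \langle \chi_\lambda^2, \chi_{(n-2,1^2)}\rangle = 2C - d(\lambda) + 1.
\end{equation*}

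The crucial identity is $C = \binom{d(\lambda)}{2}$: any two distinct corners of $\lambda$ lie in different rows and columns and can be removed together to yield a valid partition, while conversely two disconnected removable boxes must both occupy corners of $\lambda$. Substituting gives $\langle \chi_\lambda^2, \chi_{(n-2,1^2)}\rangle = (d(\lambda)-1)^2 \geq 1$ and $\langle \chi_\lambda^2, \chi_{(n-2,2)}\rangle = A + B + d(\lambda)(d(\lambda)-2)$. The latter is positive whenever $d(\lambda) \geq 3$; the main obstacle I anticipate is the tight case $d(\lambda) = 2$, which I would dispatch by writing $\lambda = (a^i,b^j)$ with $a > b \geq 1$ and $i, j \geq 1$: if $A = 0$ then $a - b = 1$ and $b = 1$, forcing $\lambda = (2^i,1^j)$ with $i + j \geq 3$ (since $n \geq 4$), so at least one of $i, j$ is $\geq 2$, producing a vertical domino and $B \geq 1$.
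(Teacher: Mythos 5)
Your proposal is correct and follows essentially the same route as the paper: it re-derives the four multiplicity formulas of \Cref{formula_of_multiplicities_sigma_hook2} (Zisser's Corollary 4.2, which the paper cites as the source of this lemma) by computing $\langle \chi_\lambda^2,\sigma_\mu\rangle$ as a restriction inner product, classifying the two-box skew shapes, and inverting via Young's rule — exactly the method the paper itself deploys for the three-box analogue in \Cref{multiplicity_firspart(n-3)_square_rep}. Your identities $A=d_2(\lambda)$, $B=d_2(\lambda')$, $C=\binom{d_1(\lambda)}{2}$ recover Zisser's formulas verbatim, and your handling of the tight case $d_1(\lambda)=2$ supplies the small positivity check the paper leaves implicit.
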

 	
 	\begin{lemma}\cite[Lemma 12]{mi}\label{miller_lemma_2}
 		Let $\lambda$ be  a rectangular partition of $n$. Then $c(\sigma_{\mu(5)})\subseteq c(\chi_{\lambda}^4)$.
 	 \end{lemma}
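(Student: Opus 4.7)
The plan is to locate, inside $\chi_\lambda^2$, enough non-rectangular irreducible constituents that two iterations of the Kronecker product force $\chi_\lambda^4$ to contain every $\chi_\mu$ with $\mu\trianglerighteq(n-5,1^5)$, which by \Cref{Youngrule} is exactly the set $c(\sigma_{(n-5,1^5)})$.

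Write $\lambda=(a^b)$ with $a,b\geq 2$ (the excluded $a=1$ or $b=1$ correspond to $\lambda=(n)$ or $(1^n)$). As a starting point, \Cref{standard_character_kronecker} combined with $\lambda^-=\{(a^{b-1},a-1)\}$ gives the clean identity
\[
\chi_\lambda\chi_{(n-1,1)}=\chi_{(a+1,a^{b-2},a-1)}+\chi_{(a^{b-1},a-1,1)},
\]
exhibiting two explicit non-rectangular partitions $\nu_1,\nu_2$ attached to $\lambda$. The first step is to establish $\chi_{\nu_i}\in c(\chi_\lambda^2)$ for at least one $i$; by the symmetry of Kronecker coefficients this is equivalent to $\langle\chi_\lambda\chi_{\nu_i},\chi_\lambda\rangle>0$, a quantity accessible through \Cref{Kroneceker_coeff_LR_coeff} after expanding $s_{\nu_i}$ in the complete-symmetric basis (the shape of $\nu_i$ is a "thickened hook", keeping the expansion short).

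With a non-rectangular $\chi_{\nu_i}\in c(\chi_\lambda^2)$ in hand, \Cref{miller_lemma_1} gives $c(\sigma_{(n-2,1^2)})\subseteq c(\chi_{\nu_i}^2)\subseteq c(\chi_\lambda^4)$, securing every $\chi_\mu$ with $\mu_1\geq n-2$ inside $c(\chi_\lambda^4)$. The remaining targets $\chi_\mu$ with $n-5\leq\mu_1\leq n-3$ must be supplied separately. I would do this by producing a second non-rectangular constituent $\chi_{\nu'}\in c(\chi_\lambda^2)$ (repeating the above method with $\chi_{(n-2,2)}$ or $\chi_{(n-2,1^2)}$ in place of $\chi_{(n-1,1)}$), so that the cross-product $\chi_{\nu_i}\chi_{\nu'}\subseteq\chi_\lambda^4$ decomposes, via \Cref{the_LR_rule} and \Cref{Kroneceker_coeff_LR_coeff}, with enough constituents to exhaust the missing $\chi_\mu$. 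The cleanest finish would be to show $\chi_{\nu_i}\chi_{\nu'}\supseteq\sigma_{(n-k,1^k)}$ for some $k\geq 5$, whence $c(\sigma_{(n-5,1^5)})\subseteq c(\sigma_{(n-k,1^k)})\subseteq c(\chi_\lambda^4)$ by dominance on hooks.

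The main obstacle lies in this last step: producing enough non-rectangular constituents of $\chi_\lambda^2$ and verifying that the resulting LR expansions cover every missing irreducible uniformly in $(a^b)$. Thin rectangles $(a^2)$, $(2^b)$, and the square case $a=b$ (where $\lambda=\lambda'$ contributes extra self-conjugate constituents) will likely require separate bookkeeping, and a small finite range of $n$ is best verified by direct computation in SageMath \cite{sam}.
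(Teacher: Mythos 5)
The paper does not prove this lemma: it is imported verbatim from Miller \cite[Lemma 12]{mi} and used as a black box (the paper's own contribution in this direction is the strengthening \Cref{miller_lemma_2_improved}, proved for $n\geq 12$). So your argument has to stand on its own, and as written it does not.

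The first step already fails. From $\chi_{\lambda}\chi_{(n-1,1)}=\chi_{\nu_1}+\chi_{\nu_2}$ you want to conclude that some $\chi_{\nu_i}$ lies in $c(\chi_{\lambda}^2)$, but membership in $c(\chi_{\lambda}\chi_{(n-1,1)})$ is the positivity of $g_{\lambda,(n-1,1),\nu_i}$, which says nothing about $g_{\lambda,\lambda,\nu_i}$, and the latter can vanish for both $i$. Concretely, for $\lambda=(4,4)$ one has $\chi_{(4,4)}^2=\chi_{(8)}+\chi_{(6,2)}+\chi_{(4,4)}+\chi_{(4,2,2)}+\chi_{(2^4)}+\chi_{(5,1^3)}+\chi_{(3,3,1,1)}$, which contains neither $\nu_1=(5,3)$ nor $\nu_2=(4,3,1)$; the same failure occurs for $\lambda=(2^3)$, where $c(\chi_{\lambda}^2)=\{(6),(4,2),(3,1^3),(2^3)\}$ omits both $(3,2,1)$ and $(2,2,1,1)$. (You can check $\langle\chi_{(4,4)}^2,\chi_{(5,3)}\rangle=0$ directly from \Cref{multiplicity_firspart(n-3)_square_rep}(1).) A non-rectangular constituent of $\chi_{\lambda}^2$ does always exist for $a,b\geq 2$ and $n\geq 5$ --- for instance $\langle\chi_{\lambda}^2,\chi_{(n-2,2)}\rangle=1$ by \Cref{formula_of_multiplicities_sigma_hook2}(3) --- but you must start from such a constituent, not from $\nu_1,\nu_2$.

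Even with that repair, the argument via \Cref{miller_lemma_1} only delivers $c(\sigma_{\mu(2)})\subseteq c(\chi_{\lambda}^4)$, i.e.\ the $\chi_{\mu}$ with $\mu_1\geq n-2$. The entire content of the lemma is the range $n-5\leq\mu_1\leq n-3$, and for that your proposal offers only the unverified hope that some cross-product $\chi_{\nu}\chi_{\nu'}$ ``decomposes with enough constituents,'' without identifying $\nu'$ or performing the expansion. That is where the work lies. For a model of what is actually required, see the proof of \Cref{miller_lemma_2_improved}: there one must locate $(n-2,2)$, $(n-3,1^3)$ and $(n-4,4)$ inside $c(\chi_{\lambda}^2)$ (the last by a separate LR analysis exploiting that skew shapes of a rectangle are right-justified) and then invoke explicit product formulas of Murnaghan, plus an ad hoc argument for $(n-6,3,3)$, to reach first parts down to $n-6$. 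Nothing of comparable force is present in your sketch.
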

  
  	Note that Zisser in \cite[Corollary 4.3]{zi} also proved \Cref{miller_lemma_1}. He proved it by directly computing the multiplicities of the irreducible constituents of $\sigma_{\mu(2)}$ in $\chi_{\lambda}^2$. Thus, \Cref{miller_lemma_1} is an immediate consequence of the following result. For a partition $\lambda$ of $n$, let $d_t(\lambda):=|\{i\mid \lambda_i-\lambda_{i+1}\geq t\}|$.
  	
  	\begin{lemma}\cite[Corollary 4.2]{zi}\label{formula_of_multiplicities_sigma_hook2} Let $\lambda$ be a partition of $n$. Then
  		\begin{enumerate}
  			\item $\langle \chi_{\lambda}^2,\chi_{(n)}\rangle =1$.
  			\item $\langle \chi_{\lambda}^2,\chi_{(n-1,1)}\rangle =d_1(\lambda)-1$.
  			\item $\langle \chi_{\lambda}^2,\chi_{(n-2,2)}\rangle =d_1(\lambda)(d_1(\lambda)-2)+d_2(\lambda)+d_2(\lambda').$
  			\item $\langle \chi_{\lambda}^2,\chi_{(n-2,1^2)}\rangle =(d_1(\lambda)-1)^2$.
  		\end{enumerate}
  	\end{lemma}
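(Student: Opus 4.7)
The plan is to reduce each $\chi_\mu$ appearing in the statement to an integer linear combination of partition characters $\sigma_\nu$ via \Cref{Youngrule} (inverse Kostka), compute $\langle \chi_\lambda^2,\sigma_\nu\rangle$ using Frobenius reciprocity, and then combine. Inverting the Kostka matrix on the two-row and hook partitions involved gives
\begin{align*}
\chi_{(n-1,1)} &= \sigma_{(n-1,1)} - \sigma_{(n)},\\
\chi_{(n-2,2)} &= \sigma_{(n-2,2)} - \sigma_{(n-1,1)},\\
\chi_{(n-2,1^2)} &= \sigma_{(n-2,1,1)} - \sigma_{(n-2,2)} - \sigma_{(n-1,1)} + \sigma_{(n)}.
\end{align*}
For any composition $\mu=(\mu_1,\ldots,\mu_k)$ of $n$, applying Frobenius reciprocity to $\sigma_\mu=\ind_{S_\mu}^{S_n}\mathbbm{1}$ and iterating the branching rule yields
\[
\langle \chi_\lambda^2,\sigma_\mu\rangle \;=\; \langle \res^{S_n}_{S_\mu}\chi_\lambda,\res^{S_n}_{S_\mu}\chi_\lambda\rangle \;=\; \sum_{\mu^{(i)}\vdash\mu_i}\bigl(c^\lambda_{\mu^{(1)},\ldots,\mu^{(k)}}\bigr)^2,
\]
where $c^\lambda_{\mu^{(1)},\ldots,\mu^{(k)}}$ is the coefficient of $s_\lambda$ in $s_{\mu^{(1)}}\cdots s_{\mu^{(k)}}$.

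Part (1) is immediate from irreducibility: $\langle \chi_\lambda^2,\chi_{(n)}\rangle=\langle \chi_\lambda,\chi_\lambda\rangle=1$. For (2), the Pieri rule (\Cref{Pierirule}) gives $c^\lambda_{\nu,(1)}=1$ precisely when $\nu\in\lambda^-$, so $\langle \chi_\lambda^2,\sigma_{(n-1,1)}\rangle=|\lambda^-|=d_1(\lambda)$ (the number of removable corners of $\lambda$, which equals the number of distinct part sizes); subtracting $1$ yields (2). For (3) and (4), we must compute $\langle \chi_\lambda^2,\sigma_{(n-2,2)}\rangle$ and $\langle \chi_\lambda^2,\sigma_{(n-2,1,1)}\rangle$. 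By \Cref{Pierirule} and \Cref{Pierirule_1}, the former equals $\sum_{\nu\vdash n-2}\bigl[(c^\lambda_{\nu,(2)})^2+(c^\lambda_{\nu,(1,1)})^2\bigr]$, which counts horizontal or vertical strips $\lambda/\nu$ of size $2$ with multiplicity according to strip type; iterating branching gives $\langle \chi_\lambda^2,\sigma_{(n-2,1,1)}\rangle=\sum_{\nu\vdash n-2}(f^{\lambda/\nu})^2$, where $f^{\lambda/\nu}$ is the number of standard Young tableaux of shape $\lambda/\nu$.

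The key combinatorial step is a classification of skew shapes $\lambda/\nu$ of size $2$ into three disjoint types: horizontal dominoes, with both boxes in a single row $i$ (requiring $\lambda_i-\lambda_{i+1}\geq 2$), counted by $d_2(\lambda)$ and admitting one SYT; vertical dominoes, counted by $d_2(\lambda')$, also admitting one SYT; and disjoint pairs in distinct rows and columns, which necessarily come from two distinct removable corners of $\lambda$, counted by $\binom{d_1(\lambda)}{2}$, simultaneously horizontal and vertical strips, and admitting two SYT. Substituting gives
\[
\langle \chi_\lambda^2,\sigma_{(n-2,2)}\rangle = d_2(\lambda)+d_2(\lambda')+d_1(\lambda)(d_1(\lambda)-1),
\]
\[
\langle \chi_\lambda^2,\sigma_{(n-2,1,1)}\rangle = d_2(\lambda)+d_2(\lambda')+2\, d_1(\lambda)(d_1(\lambda)-1).
\]
Plugging these together with (1) and (2) into the two inversion formulas at the top yields (3) directly, and (4) after simplifying $d_1(\lambda)(d_1(\lambda)-1)-d_1(\lambda)+1$ to $(d_1(\lambda)-1)^2$. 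The only non-mechanical part is the combinatorial classification above, and specifically the observation that in the ``disjoint-pair'' case the two boxes must be removable corners of $\lambda$ themselves (so that these shapes are counted by $\binom{d_1(\lambda)}{2}$ rather than by some more subtle statistic); once that is established, the remainder is bookkeeping.
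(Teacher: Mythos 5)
Your proof is correct: the Kostka inversions are right, the identity $\langle \chi_\lambda^2,\sigma_\mu\rangle=\langle \res^{S_n}_{S_\mu}\chi_\lambda,\res^{S_n}_{S_\mu}\chi_\lambda\rangle$ is exactly \Cref{multiplicity_square_partition_rep}, and the classification of two-box skew shapes (horizontal dominoes, vertical dominoes, and pairs of distinct removable corners, the last counted by $\binom{d_1(\lambda)}{2}$ and contributing with the multiplicities you state) is accurate, so the bookkeeping yields all four formulas. The paper itself does not prove this lemma but cites it from Zisser; your argument is precisely the technique the authors deploy for the three-box analogue in \Cref{multiplicity_firspart(n-3)_square_rep}, so it is the "same approach" in all but name.
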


	\section{Proof of \Cref{Theorem_1} and \Cref{Theorem_2}} 
 	
 	The main step towards proving \Cref{Theorem_1} is an improvement of \Cref{miller_lemma_1} and \Cref{miller_lemma_2} by removing the cases $\lambda=(n-1,1), (2,1^{n-2})$ (see \Cref{multiplicity_firspart(n-3)_square_rep}). The next lemma is a well-known property of induced characters.
 	\begin{lemma}\label{induction_commutes_with_tensor}
 		Let $G$ be a finite group and $H$ be a subgroup of $G$. Suppose $\chi$ is a class function of $G$ and $\phi$ that of $H$. Then $\chi\ind_{H}^{G} \phi=\ind_{H}^{G}(\phi \res_{H}^{G}\chi).$
 	\end{lemma}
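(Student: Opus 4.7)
The plan is to verify the identity pointwise using the standard character-theoretic formula
$$(\ind_H^G \phi)(g)=\frac{1}{|H|}\sum_{\substack{x\in G\\ x^{-1}gx\in H}}\phi(x^{-1}gx)$$
for the induced class function. Both sides of the claimed equation are class functions on $G$, so it suffices to evaluate at an arbitrary fixed $g\in G$ and to show that the two resulting complex numbers coincide.

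The key observation is that $\chi$ is a class function on $G$, hence $\chi(g)=\chi(x^{-1}gx)$ for every $x\in G$. Multiplying the induction formula above by $\chi(g)$ and pulling it inside the sum therefore yields
$$\chi(g)\,(\ind_H^G\phi)(g)=\frac{1}{|H|}\sum_{\substack{x\in G\\ x^{-1}gx\in H}}\phi(x^{-1}gx)\,\chi(x^{-1}gx).$$
On the support of the sum we have $x^{-1}gx\in H$, so $\chi(x^{-1}gx)=(\res_H^G\chi)(x^{-1}gx)$, and consequently the summand is $(\phi\cdot\res_H^G\chi)(x^{-1}gx)$. The right-hand side is then, by definition, $\ind_H^G(\phi\cdot\res_H^G\chi)(g)$, which is exactly what was to be proved.

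There is essentially no obstacle here: the proof reduces to a single substitution once the explicit induction formula is written out, and the argument works verbatim for an arbitrary class function $\chi$, not merely for a genuine character. An equivalent and equally short route would be to test against an arbitrary class function $\psi$ on $G$ via Frobenius reciprocity:
$$\langle \chi\cdot\ind_H^G\phi,\psi\rangle_G=\langle \ind_H^G\phi,\overline{\chi}\psi\rangle_G=\langle \phi,\res_H^G(\overline{\chi}\psi)\rangle_H=\langle \phi\cdot\res_H^G\chi,\res_H^G\psi\rangle_H=\langle \ind_H^G(\phi\cdot\res_H^G\chi),\psi\rangle_G,$$
where the middle equality uses that restriction is a ring homomorphism. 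I would present the first route because it avoids having to track complex conjugates and applies directly to any class function.
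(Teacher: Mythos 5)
Your proof is correct. The paper states this lemma without proof, labelling it ``a well-known property of induced characters,'' and your pointwise verification via the explicit induction formula (together with the observation that $\chi(x^{-1}gx)=\chi(g)$ for a class function $\chi$) is exactly the standard argument being invoked; the alternative Frobenius-reciprocity route you sketch is also valid.
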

 	
 	\begin{lemma}\label{multiplicity_square_partition_rep}
 		Let $\lambda,\mu\vdash n$. Then $\langle \sigma_{\mu}, \chi_{\lambda}^2\rangle=\langle \res^{S_n}_{S_{\mu}}\chi_{\lambda}, \res^{S_n}_{S_{\mu}}\chi_{\lambda}\rangle$.
 	\end{lemma}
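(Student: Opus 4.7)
The plan is to prove this by a direct application of Frobenius reciprocity together with \Cref{induction_commutes_with_tensor}. The key observations are that $\sigma_\mu = \ind_{S_\mu}^{S_n}\mathbbm{1}$ by definition, and that irreducible characters of $S_n$ are real-valued (in fact integer-valued), so $\chi_\lambda(\pi^{-1}) = \chi_\lambda(\pi)$ and hence $\langle \chi_\lambda^2, \psi\rangle = \langle \chi_\lambda, \chi_\lambda \psi\rangle$ for any class function $\psi$.

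The main computation will go as follows. First, I would rewrite
\[
\langle \sigma_\mu, \chi_\lambda^2\rangle = \langle \chi_\lambda \sigma_\mu, \chi_\lambda\rangle,
\]
using that $\chi_\lambda$ is real-valued. Next, by \Cref{induction_commutes_with_tensor} applied with $H = S_\mu$, $G = S_n$, $\chi = \chi_\lambda$, and $\phi = \mathbbm{1}_{S_\mu}$, I get
\[
\chi_\lambda \sigma_\mu = \chi_\lambda \ind_{S_\mu}^{S_n}\mathbbm{1} = \ind_{S_\mu}^{S_n}\!\bigl(\mathbbm{1} \cdot \res^{S_n}_{S_\mu}\chi_\lambda\bigr) = \ind_{S_\mu}^{S_n}\res^{S_n}_{S_\mu}\chi_\lambda.
\]
Plugging back in and applying Frobenius reciprocity to move the induction to a restriction on the other side of the inner product yields
\[
\langle \sigma_\mu, \chi_\lambda^2\rangle = \bigl\langle \ind_{S_\mu}^{S_n}\res^{S_n}_{S_\mu}\chi_\lambda,\, \chi_\lambda\bigr\rangle_{S_n} = \bigl\langle \res^{S_n}_{S_\mu}\chi_\lambda,\, \res^{S_n}_{S_\mu}\chi_\lambda\bigr\rangle_{S_\mu},
\]
which is the desired identity.

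There is no real obstacle here; the proof is essentially a one-line manipulation once the two ingredients (Frobenius reciprocity and the push–pull formula of \Cref{induction_commutes_with_tensor}) are combined. The only subtlety worth noting is the use of real-valuedness of $\chi_\lambda$ to identify $\langle \sigma_\mu, \chi_\lambda^2\rangle$ with $\langle \sigma_\mu \chi_\lambda, \chi_\lambda\rangle$; this is standard for $S_n$ and requires no separate argument.
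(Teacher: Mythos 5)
Your proof is correct and is exactly the argument the paper intends: the paper's proof is the one-line instruction ``Use \Cref{induction_commutes_with_tensor} and Frobenius reciprocity,'' which unpacks to precisely your computation. Your remark on real-valuedness of $\chi_\lambda$ (needed to pass from $\langle \sigma_\mu, \chi_\lambda^2\rangle$ to $\langle \sigma_\mu\chi_\lambda, \chi_\lambda\rangle$) is a correct and worthwhile detail that the paper leaves implicit.
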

 	
 	\begin{proof}
 		Use \Cref{induction_commutes_with_tensor} and Frobenius reciprocity.
 	\end{proof}
 
 	\noindent The following is easy to compute using Young's rule (\Cref{Youngrule}).
 	\begin{lemma}\label{Young's_rule_application}
 		The following relations hold:
 		\begin{enumerate}
 			\item $\sigma_{(n-3,3)}=\chi_{(n-3,3)}+ \chi_{(n-2,2)}+\chi_{(n-1,1)}+\chi_{(n)}.$
 			\item $\sigma_{(n-3,2,1)}=\chi_{(n-3,2,1)}+ \chi_{(n-3,3)}+\chi_{(n-2,1^2)}+2\chi_{(n-2,2)}+2\chi_{(n-1,1)}+\chi_{(n)}.$
 			\item $\sigma_{(n-3,1^3)}=\chi_{(n-3,1^3)}+2\chi_{(n-3,2,1)}+ \chi_{(n-3,3)}+3\chi_{(n-2,1^2)}+3\chi_{(n-2,2)}+3\chi_{(n-1,1)}+\chi_{(n)}.$
 		\end{enumerate}
 	\end{lemma}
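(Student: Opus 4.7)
The plan is to invoke Young's rule (\Cref{Youngrule}) directly, $\sigma_{\mu}=\sum_{\lambda\vdash n} K_{\lambda\mu}\chi_{\lambda}$, and compute the relevant Kostka numbers by explicit enumeration of SSYT. The first reduction is that $K_{\lambda\mu}>0$ only when $\mu\trianglelefteq \lambda$, which in each case restricts the sum to at most seven partitions: for (1) with $\mu=(n-3,3)$ the partitions $\lambda\trianglerighteq \mu$ are exactly $(n), (n-1,1), (n-2,2), (n-3,3)$; for (2) with $\mu=(n-3,2,1)$ one adds $(n-2,1^2)$ and $(n-3,2,1)$; for (3) with $\mu=(n-3,1^3)$ the only further partition is $(n-3,1^3)$ itself.

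For each allowed pair $(\lambda,\mu)$, I would count SSYT of shape $\lambda$ and type $\mu$ by hand; since the entry alphabet has size at most $4$, the enumeration is very short. In part (1) only the entries $1$ and $2$ occur, and the strict-column rule forces row $2$ (whenever it exists) to be a string of $2$'s, after which row $1$ is uniquely determined as ones followed by twos, giving $K_{\lambda,(n-3,3)}=1$ in each case. In part (2) the entry $3$ appears exactly once, and enumerating its admissible positions subject to the column-strict rule yields the stated multiplicities; in particular the value $2$ at $K_{(n-1,1),(n-3,2,1)}$ and $K_{(n-2,2),(n-3,2,1)}$ comes from the two legal placements of the unique $3$. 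In part (3) the entries $2,3,4$ each appear once, and the multiplicities $3$ in front of $\chi_{(n-1,1)}$, $\chi_{(n-2,2)}$, $\chi_{(n-2,1^2)}$ correspond respectively to the three choices of which of $\{2,3,4\}$ occupies the lone cell in row $2$ (for the first shape), the three choices of an ordered pair from $\{2,3,4\}$ for row $2$ of $(n-2,2)$, and the three strictly-increasing choices of an ordered pair from $\{2,3,4\}$ for the column $(2,1),(3,1)$ of $(n-2,1^2)$; the multiplicity $2$ in front of $\chi_{(n-3,2,1)}$ is the number of strictly-increasing fillings of the column $(1,1),(2,1),(3,1)$ for which the forced completion of $(2,2)$ is also legal.

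The main obstacle is bookkeeping, with one small subtlety for the shapes $(n-2,2)$ and $(n-3,2,1)$ in part (3): one must check that the column-strict condition is compatible with the forced placement of the remaining small entry at the right end of row $1$. This uses $n\geq 5$, which is implicit since each of the partitions under consideration has a part of size $n-3\geq 2$. No deeper input beyond \Cref{Youngrule} and the dominance characterization of the support of Kostka numbers is required.
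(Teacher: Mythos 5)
Your proposal is correct and is exactly the route the paper intends: the paper gives no written proof, remarking only that the lemma "is easy to compute using Young's rule," and your explicit enumeration of the Kostka numbers $K_{\lambda\mu}$ for the partitions $\lambda\trianglerighteq\mu$ supplies precisely that computation, with all the stated multiplicities checking out. The only caveat worth noting is that the identities involving $(n-3,3)$ and $(n-3,2,1)$ as shapes implicitly require $n\geq 6$ (consistent with where the lemma is applied), not just $n\geq 5$.
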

 
 	\noindent Now we are ready to state and prove the main lemma of this section.
 	\begin{lemma}\label{multiplicity_firspart(n-3)_square_rep}
 		Let $n\geq 6$ and $\lambda=(\lambda_1,\lambda_2,\ldots)\vdash n$. We have the following:
 		\begin{enumerate}
 			\item $\langle \chi_{(n-3,3)},\chi_{\lambda}^2\rangle = d_3(\lambda)+d_3(\lambda')+d_2(\lambda)(2d_1(\lambda)-3)+d_2(\lambda')(2d_1(\lambda')-3)+d_1(\lambda)(d_1(\lambda)-1)(d_1(\lambda)-3)+k+l$,
 			\item $\langle \chi_{(n-3,2,1)},\chi_{\lambda}^2\rangle = d_2(\lambda)(3d_1(\lambda)-4)+d_2(\lambda')(3d_1(\lambda')-4)+d_1(\lambda)(2d_1(\lambda)^2-8d_1(\lambda)+7)+k+l$,
 			\item $\langle \chi_{(n-3,1^3)},\chi_{\lambda}^2\rangle = d_2(\lambda)(d_1(\lambda)-1)+d_2(\lambda')(d_1(\lambda')-1)+(d_1(\lambda)-1)(d_1(\lambda)^2-3d_1(\lambda)+1)+k+l$,
 		\end{enumerate}
 	where $k=|\{i :\lambda_i=\lambda_{i+1}, \lambda_{i+1}-\lambda_{i+2}\geq 2\}|$ and $l=|\{i : \lambda_i-\lambda_{i+1}=1, \lambda_{i+1}-\lambda_{i+2}\geq 1\}|$. 
 	\end{lemma}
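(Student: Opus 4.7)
By \Cref{multiplicity_square_partition_rep}, for every $\mu\vdash n$ one has $\langle \sigma_\mu,\chi_\lambda^2\rangle=\langle \res^{S_n}_{S_\mu}\chi_\lambda,\res^{S_n}_{S_\mu}\chi_\lambda\rangle$. I will apply this for $\mu\in\{(n-3,3),(n-3,2,1),(n-3,1^3)\}$ and then expand each $\sigma_\mu$ in the irreducible basis via \Cref{Young's_rule_application}. This produces three linear equations relating the three target multiplicities to the $\langle \sigma_\mu,\chi_\lambda^2\rangle$'s and to $\langle \chi_\eta,\chi_\lambda^2\rangle$ for $\eta\in\{(n),(n-1,1),(n-2,2),(n-2,1^2)\}$. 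The latter four are already computed in \Cref{formula_of_multiplicities_sigma_hook2}, and the three new characters $\chi_{(n-3,3)},\chi_{(n-3,2,1)},\chi_{(n-3,1^3)}$ each appear for the first time in a different $\sigma_\mu$, yielding a triangular system. Thus the problem reduces to computing the three $\langle \sigma_\mu,\chi_\lambda^2\rangle$.

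For these, I expand each $\res^{S_n}_{S_\mu}\chi_\lambda$ using the Littlewood--Richardson rule (\Cref{the_LR_rule}) and its Pieri/branching specialisations (\Cref{Pierirule}, \Cref{Pierirule_1}, \Cref{branching_rule}). For $\mu=(n-3,1^3)$, $S_\mu=S_{n-3}$ and iterated branching gives $\res^{S_n}_{S_{n-3}}\chi_\lambda=\sum_{\nu\vdash n-3}f^{\lambda/\nu}\chi_\nu$, so $\langle \sigma_{(n-3,1^3)},\chi_\lambda^2\rangle=\sum_\nu(f^{\lambda/\nu})^2$. For $\mu=(n-3,3)$ the LR rule yields $\langle \sigma_{(n-3,3)},\chi_\lambda^2\rangle=\sum_{\nu\vdash n-3,\,\rho\vdash 3}(c^\lambda_{\nu\rho})^2$. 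For $\mu=(n-3,2,1)$ one further composes with the branching $S_3\downarrow S_2\times S_1$ to obtain an analogous (slightly more involved) sum of squared LR coefficients.

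These sums are evaluated combinatorially. Since $|\rho|\le 3$, every relevant LR coefficient lies in $\{0,1,2\}$: by the Pieri rules, $c^\lambda_{\nu,(3)}$ (resp.\ $c^\lambda_{\nu,(1^3)}$) is the indicator that $\lambda/\nu$ is a horizontal (resp.\ vertical) $3$-strip, while $c^\lambda_{\nu,(2,1)}$ is evaluated directly from \Cref{the_LR_rule}. Classifying the $3$-box skew shapes of $\lambda$ by the distribution of their cells among the rows of $\lambda$ produces contributions naturally expressible in terms of $d_1(\lambda),d_2(\lambda),d_3(\lambda)$ and their conjugates. The correction terms $k$ and $l$ record the contributions from those local patterns---two equal rows followed by a drop of at least two, or a unit stair-step of length two---where an extra $3$-box configuration is produced, or an LR coefficient takes the value $2$, beyond what the $d_t$'s alone would account for.

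The main obstacle will be this final bookkeeping step: a careful case analysis on the relative positions of the removable corners of $\lambda$ is required, with particular attention to degenerate partitions where $\lambda$ has few distinct row lengths so that some $d_t$'s are small or zero (this is also why the hypothesis $n\geq 6$ is needed, so that all three shapes $(n-3,3),(n-3,2,1),(n-3,1^3)$ are actual partitions and none of the arguments collapse). Once the three $\langle \sigma_\mu,\chi_\lambda^2\rangle$ are expressed as explicit polynomials in $d_t(\lambda),d_t(\lambda'),k,l$, formulas (1)--(3) follow by substitution into the inversion from the first paragraph.
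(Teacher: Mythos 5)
Your plan is essentially the paper's proof: reduce each $\langle\sigma_\mu,\chi_\lambda^2\rangle$ to $\langle\res_{S_\mu}\chi_\lambda,\res_{S_\mu}\chi_\lambda\rangle$, evaluate these via the Littlewood--Richardson/Pieri/branching rules by classifying the three-box skew shapes $\lambda/\nu$, and then invert the triangular system coming from Young's rule together with \Cref{formula_of_multiplicities_sigma_hook2}. This is exactly the route the authors take (they enumerate nine possible three-box skew shapes, count how many $\nu\vdash n-3$ realize each in terms of $d_1,d_2,d_3,k,l$, and record the coefficient values $c^\lambda_{\nu,(3)},c^\lambda_{\nu,(2,1)},c^\lambda_{\nu,(1^3)}$ and the branching multiplicities on each shape). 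However, what you have written is a plan rather than a proof: the step you yourself flag as "the main obstacle" --- the explicit enumeration of the skew shapes, the counts of each type, and the coefficient values on each --- is precisely where all of the specific polynomials $2d_1-3$, $3d_1-4$, $2d_1^2-8d_1+7$, and the correction terms $k$ and $l$ come from, and none of it is carried out. Until that case analysis is done and the resulting sums of squares are assembled and fed through the Young's-rule inversion, formulas (1)--(3) are not established.
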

 
 	\begin{proof}
 	We compute $\langle \sigma_{(n-3,3)},\chi_{\lambda}^2\rangle$, $\langle\sigma_{(n-3,2,1)},\chi_{\lambda}^2\rangle$, and  $\langle \sigma_{(n-3,1^3)},\chi_{\lambda}^2\rangle$ one by one. This is equivalent to computing $\langle \res^{S_n}_{S_{n-3}\times S_3}\chi_{\lambda}$, $\res^{S_n}_{S_{n-3}\times S_3}\chi_{\lambda}\rangle$, $\langle \res^{S_n}_{S_{n-3}\times S_2}\chi_{\lambda}, \res^{S_n}_{S_{n-3}\times S_2}\chi_{\lambda}\rangle$, and $\langle \res^{S_n}_{S_{n-3}}\chi_{\lambda}, \res^{S_n}_{S_{n-3}}\chi_{\lambda}\rangle$ respectively (see \Cref{multiplicity_square_partition_rep}). We use the LR rule (\Cref{the_LR_rule}) for this computation. There are nine  possible skew shapes with three boxes, which are given in \Cref{Fig_4}. 
 		
 	\begin{figure}[!h]
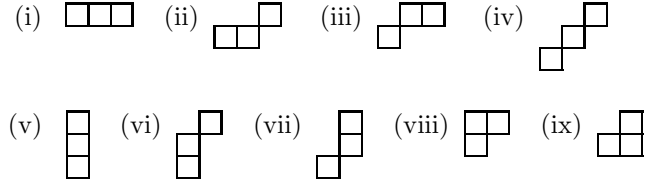

 		(i)\;\; \begin{ytableau}
 			 		 $\text{}$ &  &  \\
 				\end{ytableau}\;\;\;
 		(ii)\;\;\begin{ytableau}
 					\none &  \none &  \\
 					& 
 				\end{ytableau}\;\;\;\; 
 		(iii)\;\;\begin{ytableau}
 					\none &  &  \\
 			 		& \none & \none
 				\end{ytableau}\;\;\;\;
 		(iv)\;\;\begin{ytableau}
 					\none & \none &   \\
 					\none &  & \none\\
 					 & \none & \none
 				\end{ytableau}\;\;\;\;
 		
 		\vspace{0.5 cm}
 		
 		(v)\;\; \begin{ytableau}
 			 		\\
 			 		\\
 			 		\\
 				\end{ytableau}\;\;\;
 		(vi)\;\;\begin{ytableau}
 					\none & \\
 					\\
 					\\
 				\end{ytableau}\;\;\;
 		(vii)\;\;\begin{ytableau}
 					\none &  \\
 					\none &  \\
 					& \none
 				\end{ytableau}\;\;\;
 		(viii)\;\;\begin{ytableau}
 					 $\text{}$ &  \\
 					 & \none
 				\end{ytableau}\;\;\;
 		(ix)\;\;\begin{ytableau}
 					\none & \\
 				     	  &
 				\end{ytableau}
 		\caption{Possible skew shapes with three boxes.}
 		\label{Fig_4}
 	\end{figure}
 	Given $\lambda\vdash n$, we compute the number of partitions $\mu\vdash n-3$ such that $\lambda/\mu$ is given by each of these shapes. Clearly, the number of $\mu's$ for shape (i) is $d_3(\lambda)$ and that of shape (v) is $d_3(\lambda')$. The number of $\mu's$ such that $\lambda/\mu$ has shape given by one of (ii) or (iii) is $d_2(\lambda)(d_1(\lambda)-1)$. Since shapes in (vi) and (vii) are conjugates of (ii) and (iii), respectively, the  number of $\mu's$ such that $\lambda/\mu$ is given by one of (vi) or (vii) is $d_2(\lambda')(d_1(\lambda')-1)$. The number of $\mu's$ for shape (iv) is clearly $\binom{d_1(\lambda)}{3}$. Finally, it is not difficult to see that the number of $\mu's$ for the shape in (viii) and (ix) is given by $l$ and $k$, respectively.
 	
 	\medskip
 	
 	We start with the evaluation of $\langle \res^{S_n}_{S_{n-3}\times S_3}\chi_{\lambda}$, $\res^{S_n}_{S_{n-3}\times S_3}\chi_{\lambda}\rangle$. We have,
 	$$\displaystyle \res_{S_{n-3}\times S_3}^{S_n}\chi_{\lambda}=\sum_{\mu\vdash n-3}c_{\mu,(3)}^{\lambda}(\chi_{\mu}\tensor\chi_{(3)})+\sum_{\mu\vdash n-3}c_{\mu,(2,1)}^{\lambda}(\chi_{\mu}\tensor\chi_{(2,1)})+\sum_{\mu\vdash n-3}c_{\mu,(1^3)}^{\lambda}(\chi_{\mu}\tensor\chi_{(1^3)}).$$
	Using the Pieri rule (see \Cref{Pierirule}), $c_{\mu,(3)}^{\lambda}=1$ if $\lambda/\mu$ is a horizontal strip, that is, $\lambda/\mu$ has shape given by one of (i)-(iv), otherwise $c_{\mu,(3)}^{\lambda}=0$. Thus, $\sum\limits_{\mu \vdash n-3} [c_{\mu,(3)}^{\lambda}]^2=d_3(\lambda)+d_2(\lambda)(d_1(\lambda)-1)+\binom{d_1(\lambda)}{3}$. Similarly, by \Cref{Pierirule_1}, $c_{\mu,(1^3)}^{\lambda}=1$ if $\lambda/\mu$ is a vertical strip, that is, $\lambda/\mu$ has shape given by one of (iv)-(vii), otherwise $c_{\mu,(1^3)}^{\lambda}=0$. Thus, $\sum\limits_{\mu \vdash n-3}[c_{\mu,(1^3)}^{\lambda}]^2=d_3(\lambda')+d_2(\lambda')(d_1(\lambda')-1)+\binom{d_1(\lambda)}{3}$. Now we compute $c_{\mu,(2,1)}^{\lambda}$. Using the LR rule, if $\mu$ is such that $\lambda/\mu$ is one of the shapes in (i)-(ix), then $c_{\mu,(2,1)}^{\lambda}$ is the number of SSYTs of that shape and type $(2,1)$ such that the reverse reading word is a lattice permutation. We have the following:
	\[c_{\mu,(2,1)}^{\lambda}=
	\begin{cases}
		1 & \text{if $\lambda/\mu$ has shape as in (ii), (iii), and (vi)-(ix)},\\
		2 & \text{if $\lambda/\mu$ has the shape in (iv)},\\
		0 & \text{otherwise}.
	\end{cases}
	\]
	\noindent Thus, $\sum\limits_{\mu \vdash n-3}[c_{\mu,(2,1)}^{\lambda}]^2=d_2(\lambda)(d_1(\lambda)-1)+d_2(\lambda')(d_1(\lambda')-1)+4\binom{d_1(\lambda)}{3}$+k+l. We conclude that
	\begin{eqnarray*}
		&& \langle \sigma_{(n-3,3)},\chi_{\lambda}^2\rangle=\sum_{\mu \vdash n-3} [c_{\mu,(3)}^{\lambda}]^2+[c_{\mu,(2,1)}^{\lambda}]^2+[c_{\mu,(1^3)}^{\lambda}]^2\\ &=&d_3(\lambda)+d_3(\lambda')+2d_2(\lambda')(d_1(\lambda')-1)+2d_2(\lambda)(d_1(\lambda)-1)+6\binom{d_1(\lambda)}{3}+k+l.
	\end{eqnarray*} 
	By \Cref{Young's_rule_application}, $\chi_{(n-3,3)}=\sigma_{(n-3,3)}-\chi_{(n-2,2)}-\chi_{(n-1,1)}-\chi_{(n)}$, and hence $(1)$ follows from \Cref{formula_of_multiplicities_sigma_hook2}.
	 	
 	\medskip
 	
 	Now we compute $\langle \res_{S_{n-3}\times S_2}^{S_n} \chi_{\lambda}, \res_{S_{n-3}\times S_2}^{S_n} \chi_{\lambda}\rangle$. Let $\displaystyle \res_{S_{n-3}\times S_2}^{S_n} \chi_{\lambda}=\sum_{\mu\vdash n-3}a_{\mu}(\chi_{\mu}\tensor\chi_{(2)})+\sum_{\mu\vdash n-3}b_{\mu}(\chi_{\mu}\tensor\chi_{(1^2)}).$ At first, we compute $a_{\mu}$. The skew shapes with two boxes are given by 
 	\begin{figure}[!h]
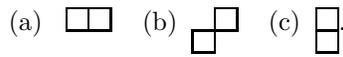

 		(a)\;\; \begin{ytableau}
 					$\text{}$ &   \\
 				\end{ytableau}\;\;\;
 		(b)\;\;\begin{ytableau}
 					\none &   \\
 					& \none
 				\end{ytableau}\;\;\;
 		(c)\;\;\begin{ytableau}
 					\\
 					\\
 				\end{ytableau}.
 		\caption{Possible skew shapes with two boxes.}
 		\label{Fig_5}
 	\end{figure}
 
 	Applying the branching rule and Pieri rule, we note that given a $\mu$ such that $\lambda/\mu$ is one of the skew shapes in (i)-(ix) of \Cref{Fig_4}, the value of $a_{\mu}$ is equal to the number of partitions $\nu\vdash n-1$ such that $\mu \subset \nu \subset\lambda$ with $\lambda/\nu$ being a single box and $\nu/\mu$ being a horizontal strip with two boxes, that is, the skew shapes (a) and (b) in \Cref{Fig_5}. For example, if $\lambda/\mu$ is the skew shape in \Cref{Fig_4}(ii), then the number of distinct  $\nu$'s with the above property is two, whence $a_{\mu}=2$. The following colored diagram explains this.
 	
 	\begin{figure}[!h]
 		\begin{ytableau}
 			\none &  \none & *(red) \\
 			 *(green) & *(green)
 		\end{ytableau}\;\;\;\; 
 		\begin{ytableau}
 			\none &  \none & *(green) \\
 			 *(green) & *(red) 
 		\end{ytableau}\;\;\;\; 
 	\end{figure}
 	In both of the diagrams above, $\nu$ is such that $\lambda/\nu$ is the red colored box and $\nu/\mu$ is the green colored horizontal strip. More generally, we have the following:
 	\[a_{\mu}=
 	\begin{cases}
 		1 & \text{if $\lambda/\mu$ has shape as in (i), (vi)-(ix)},\\
 		2 & \text{if $\lambda/\mu$ has the shape as in (ii) and (iii)},\\
 		3 & \text{if $\lambda/\mu$ has the shape as in (iv)},\\
 		0 & \text{otherwise}.
 	\end{cases}
 	\]
 	Thus, $\sum\limits_{\mu \vdash n-3} a_{\mu}^2=d_3(\lambda)+4d_2(\lambda)(d_1(\lambda)-1)+d_2(\lambda')(d_1(\lambda')-1)+k+l+9\binom{d_1(\lambda)}{3}$.
 	Similarly, we can compute the value of $b_{\mu}$. In this case, we have to take into account the vertical strips with two boxes, that is, skew shapes (b) and (c) of \Cref{Fig_5}. We have the following:
 	\[b_{\mu}=
 	\begin{cases}
 		1 & \text{if $\lambda/\mu$ has shape as in (ii), (iii), (v), (viii), and (ix)},\\
 		2 & \text{if $\lambda/\mu$ has the shape as in (vi) and (vii)},\\
 		3 & \text{if $\lambda/\mu$ has the shape as in (iv)},\\
 		0 & \text{otherwise}.
 	\end{cases}
 	\]
 	Thus, $\sum\limits_{\mu \vdash n-3}b_{\mu}^2=d_3(\lambda')+ d_2(\lambda)(d_1(\lambda)-1)+4d_2(\lambda')(d_1(\lambda')-1)+k+l+9\binom{d_1(\lambda)}{3}$. Therefore, we conclude that
 	\begin{eqnarray*}
 		&& \langle \sigma_{(n-3,2,1)},\chi_{\lambda}^2\rangle=\sum_{\mu \vdash n-3} a_{\mu}^2+b_{\mu}^2\\ &=& d_3(\lambda)+d_3(\lambda')+5d_2(\lambda')(d_1(\lambda')-1)+5d_2(\lambda)(d_1(\lambda)-1)+18\binom{d_1(\lambda)}{3}+2k+2l.
 	\end{eqnarray*} 
 	By \Cref{Young's_rule_application}, $\chi_{(n-3,2,1)}=\sigma_{(n-3,2,1)}-\chi_{(n-3,3)}-\chi_{(n-2,1^2)}-2\chi_{(n-2,2)}-2\chi_{(n-1,1)}-\chi_{(n)}$. Thus, by \Cref{formula_of_multiplicities_sigma_hook2}, (2) follows.
 	
 	\medskip
 	
 	Finally, we compute $\langle \res_{S_{n-3}}^{S_n}\chi_{\lambda}, \res_{S_{n-3}}^{S_n}\chi_{\lambda}\rangle$. We have,
 	$$\res_{S_{n-3}}^{S_n}\chi_{\lambda}=\sum_{\mu\vdash n-3}a_{\mu}\chi_{\mu}.$$
 	By applying the branching rule, we note that if $\mu\vdash n-3$ is such that $\lambda/\mu$ is one of the skew shapes (i)-(ix) of \Cref{Fig_4}, then $a_{\mu}$ is the cardinality of the set of all tuples $(\nu,\eta)$, where $\eta\vdash n-1$, $\nu\vdash n-2$ are such that $\mu\subset\nu\subset\eta\subset\lambda$ with $\lambda/\eta$, $\eta/\nu$, $\nu/\mu$ each being a single box. For example, if $\mu$ is such that $\lambda/\mu$ is the skew shape in (ii), then the number of different tuples $(\mu,\eta)$ with the desired property is three. The following colored diagram explains this.
 	\begin{figure}[!h]
 		\begin{ytableau}
 			\none &  \none & *(red) \\
 			*(green) & *(yellow)
 		\end{ytableau}\;\;\;\; 
 		\begin{ytableau}
 			\none &  \none & *(yellow) \\
 			*(green) & *(red) 
 		\end{ytableau}\;\;\;\; 
 		\begin{ytableau}
 			\none &  \none & *(green) \\
 			*(yellow) & *(red) 
 		\end{ytableau}
 	\end{figure}
 
 	\noindent In the above figure, $\lambda/\eta$ is denoted by the red box, $\eta/\nu$ is denoted by the yellow box, and $\nu/\mu$ is denoted by the green box. This way of counting gives the following:
 	\[a_{\mu}=
 	\begin{cases}
 		1 & \text{if $\lambda/\mu$ has shape as in (i) and (v)},\\
 		2 & \text{if $\lambda/\mu$ has the shape as in (viii) and (ix)},\\
 		3 & \text{if $\lambda/\mu$ has the shape as in (ii), (iii), (vi), and (vii)},\\
 		6 & \text{if $\lambda/\mu$ has the shape as in (iv)}.
 	\end{cases}
 	\]
 	Thus, we conclude that
 	\begin{eqnarray*}
 		&& \langle \sigma_{(n-3,1^3)},\chi_{\lambda}^2\rangle=\sum_{\mu \vdash n-3} a_{\mu}^2\\ &=& d_3(\lambda)+d_3(\lambda')+9d_2(\lambda')(d_1(\lambda')-1)+9d_2(\lambda)(d_1(\lambda)-1)+36\binom{d_1(\lambda)}{3}+4k+4l.
 	\end{eqnarray*} 
 	By \Cref{Young's_rule_application}, $\displaystyle \chi_{(n-3,1^3)}=\sigma_{(n-3,1^3)}-2\chi_{(n-3,2,1)}-\chi_{(n-3,3)}-3\chi_{(n-2,1^2)}-3\chi_{(n-2,2)}-3\chi_{(n-1,1)}-\chi_{(n)}$, and hence $(3)$ also follows from \Cref{formula_of_multiplicities_sigma_hook2}. The proof is now complete.
	\end{proof}
	
	\begin{remark}
		The previous proof can be written completely in the language of symmetric functions. We recall that for partitions $\mu,\lambda$ such that $\mu\subseteq \lambda$, $s_{\lambda/\mu}$ is the skew-Schur function corresponding to the skew shape $\lambda/\mu$. Further, if $\lambda\vdash m+n, \mu \vdash m$, and $\nu\vdash n$ then
		$$c^{\lambda}_{\mu\nu}=\langle s_{\mu}s_{\nu}, s_{\lambda} \rangle=\langle s_{\mu},s_{\lambda/\nu} \rangle = \langle s_{\nu}, s_{\lambda/\mu}\rangle.$$
		
		We note that $\langle \sigma_{\mu},\chi_{\lambda}^2\rangle$ is equal to the coefficient of $s_{\lambda}$ in the Schur expansion of $h_{\mu}*s_{\lambda}$. A consequence of  \Cref{Kroneceker_coeff_LR_coeff} is the following result (see \cite[Theorem 3.1]{gg}). 
		\begin{equation}\label{complete_schur_kronecker_expansion}
					h_{\mu}*s_{\lambda}=\sum\prod_{i\geq 1}s_{\lambda^i/\lambda^{i-1}},
		\end{equation}
		summed over all sequences $(\lambda^0,\lambda^1,\cdots, \lambda^l)$ of partitions such that $\phi=\lambda^0\subseteq \lambda^1\subseteq \cdots \subseteq \lambda^l=\lambda$ and $|\lambda^i/\lambda^{i-1}|=\mu_i$ for all $i\geq 1$. Using this decomposition and the  LR rule, finding the coefficient of $s_{\lambda}$ boils down to the same computations that we have done above.
	\end{remark}
	\begin{lemma}\label{miller_lemma_1_improved}
		Let $n\geq 6$ and $\lambda$ be a non-rectangular partition of $n$ such that $\lambda\notin \{(4,2),(2^2,1^2),(n-1,1), (2,1^{n-2})\}$. Then $c(\sigma_{\mu(3)})\subseteq c(\chi_{\lambda}^2)$.
	\end{lemma}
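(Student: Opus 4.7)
\medskip
\noindent\textbf{Proof Plan.} The plan is to combine Miller's \Cref{miller_lemma_1} with the explicit multiplicity formulas in \Cref{multiplicity_firspart(n-3)_square_rep}. By \Cref{Young's_rule_application}(3),
\[\sigma_{\mu(3)}=\chi_{(n-3,1^3)}+2\chi_{(n-3,2,1)}+\chi_{(n-3,3)}+3\chi_{(n-2,1^2)}+3\chi_{(n-2,2)}+3\chi_{(n-1,1)}+\chi_{(n)},\]
and the last four summands account for all the constituents of $\sigma_{\mu(2)}=\sigma_{(n-2,1^2)}$. Since $\lambda$ is non-rectangular, \Cref{miller_lemma_1} already yields $c(\sigma_{\mu(2)})\subseteq c(\chi_\lambda^2)$, so the problem reduces to showing that each of $\chi_{(n-3,3)}$, $\chi_{(n-3,2,1)}$, $\chi_{(n-3,1^3)}$ appears in $\chi_\lambda^2$ with positive multiplicity, which is exactly what the three formulas in \Cref{multiplicity_firspart(n-3)_square_rep} compute.

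A useful preliminary observation is that $d_1(\lambda)=d_1(\lambda')$ (both equal the number of distinct parts of $\lambda$) and $\chi_\lambda^2=\chi_{\lambda'}^2$, so the analysis can be organised by $d_1:=d_1(\lambda)$. If $d_1\geq 4$, each of the cubic contributions $d_1(d_1-1)(d_1-3)$, $d_1(2d_1^2-8d_1+7)$, $(d_1-1)(d_1^2-3d_1+1)$ in \Cref{multiplicity_firspart(n-3)_square_rep} is strictly positive while every other summand is non-negative, so positivity of all three multiplicities is immediate. If $d_1=3$, the first cubic vanishes but the coefficient $2d_1-3=3$ in front of $d_2(\lambda)$ is useful: either $d_2(\lambda)\geq 1$ (or by symmetry $d_2(\lambda')\geq 1$) and the corresponding term dominates, or $\lambda=(3^p,2^q,1^r)$ with $p,q,r\geq 1$ and the integers $k,l$ pick up the slack (for instance the staircase $(3,2,1)$ contributes $l=2$).

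The crux is $d_1=2$, i.e.\ $\lambda=(a^p,b^q)$ with $a>b\geq 1$ and $p,q\geq 1$. Here the three cubic terms become $-2,-2,-1$, so each of the three formulas can potentially vanish. The six statistics $d_2(\lambda),d_3(\lambda),d_2(\lambda'),d_3(\lambda'),k,l$ reduce to indicator sums in $(a,b,p,q)$: for example $d_2(\lambda)=[a-b\geq 2]+[b\geq 2]$, $k=[p\geq 2\text{ and }a-b\geq 2]+[q\geq 2\text{ and }b\geq 2]$, and $l=[a-b=1\text{ and }q=1]$. A four-way split on $(a-b,b)\in\{(1,1),(\geq 2,1),(1,\geq 2),(\geq 2,\geq 2)\}$, refined by whether $p$ and $q$ exceed $1$, isolates the four excluded partitions as precisely the tight boundary cases: $(n-1,1)$ and its conjugate $(2,1^{n-2})$ simultaneously annihilate all three multiplicities (the sub-case $(\geq 2,1)$ with $p=q=1$), while $(4,2)$ and its conjugate $(2^2,1^2)$ annihilate only the multiplicity of $\chi_{(n-3,3)}$ (the sub-case $(\geq 2,\geq 2)$ at $(a,b)=(4,2)$, $p=q=1$, together with the sub-case $(1,1)$ at $p=q=2$). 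In every other configuration at least one auxiliary indicator is nonzero and compensates for the negative cubic contribution.

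The main obstacle is the bookkeeping for $d_1=2$: each of the three formulas must be verified positive on the full parameter region with exactly four exceptions, and one must check that the list $\{(n-1,1),(2,1^{n-2}),(4,2),(2^2,1^2)\}$ is precisely the vanishing locus. The identity $d_1(\lambda)=d_1(\lambda')$ together with $\chi_\lambda^2=\chi_{\lambda'}^2$ folds conjugate pairs and substantially reduces the case count, making the check tractable.
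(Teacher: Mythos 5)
Your proposal follows essentially the same route as the paper: reduce to showing that $\chi_{(n-3,3)}$, $\chi_{(n-3,2,1)}$, and $\chi_{(n-3,1^3)}$ lie in $c(\chi_{\lambda}^2)$ (the remaining constituents of $\sigma_{\mu(3)}$ being exactly those of $\sigma_{\mu(2)}$, covered by \Cref{miller_lemma_1}), then apply the multiplicity formulas of \Cref{multiplicity_firspart(n-3)_square_rep} and case-analyse on $d_1(\lambda)$, with the crux being $d_1(\lambda)=2$. Your bookkeeping for $d_1(\lambda)=2$ (including the identification of $\{(n-1,1),(2,1^{n-2}),(4,2),(2^2,1^2)\}$ as precisely the vanishing locus) checks out and corresponds to the paper's Cases I--IV, so the plan is sound.
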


	\begin{proof}
		We need to show that $\{ \chi_{(n-3,3)}, \chi_{(n-3,2,1)}, \chi_{(n-3,1^3)} \} \subseteq c(\chi_{\lambda}^2)$. We use the multiplicity results of \Cref{multiplicity_firspart(n-3)_square_rep}. We also use the fact that $d_1(\lambda)=d_1(\lambda')$. Assume first that $d_1(\lambda)\geq 3$. Note that the quadratic functions $2x^2-8x+7$ and $x^2-3x+1$ are both strictly positive in the interval $[3,\infty)$. Therefore, it easily follows that $\langle \chi_{(n-3,2,1)}, \chi_{\lambda}^2\rangle >0$ and $\langle \chi_{(n-3,1^3)}, \chi_{\lambda}^2\rangle >0$. If $d_2(\lambda)$ or $d_2(\lambda')$ is positive, then $\langle \chi_{(n-3,3)}, \chi_{\lambda}^2\rangle>0$. Thus, we may assume that both $d_2(\lambda)$ and $d_2(\lambda')$ are zero. If $d_1(\lambda)\geq 4$, we once again conclude that $\langle \chi_{(n-3,3)},\chi_{\lambda}^2\rangle>0$. Now assume that $d_1(\lambda)=3$. It is clear that the only choice is  $\lambda=(3,2,1)$ in this case. If $\lambda=(3,2,1)$, then $l=1$. We conclude that $\langle \chi_{(n-3,3)}, \chi_{\lambda}^2\rangle>0$. Overall, we conclude that if $d_1(\lambda)\geq 3$, then $c(\sigma_{\mu(3)})\subseteq c(\chi_{\lambda}^2)$. Now we may assume that $d_1(\lambda)=2$. We have the following:
		\begin{enumerate}
			\item $\langle \chi_{(n-3,3)}, \chi_{\lambda}^2\rangle =d_3(\lambda)+d_3(\lambda')+d_2(\lambda)+d_2(\lambda')+l+k-2$.
			\item $\langle \chi_{(n-3,2,1)}, \chi_{\lambda}^2\rangle = 2d_2(\lambda)+2d_2(\lambda')+l+k-2$.
			\item $\langle \chi_{(n-3,1^3)}, \chi_{\lambda}^2\rangle = d_2(\lambda)+d_2(\lambda')+l+k-1$.
		\end{enumerate}
		Since $d_1(\lambda)=2$, we write $\lambda=(a^x,b^y)$, where $x,y\geq 1$ and $a>b$. If $b\geq 2$ and $y\geq 2$, then both $d_2(\lambda)$ and $d_2(\lambda')$ are positive. Moreover, $k\geq 1$. We conclude that each of the inner-products above is positive, and hence $c(\sigma_{\mu(3)})\subseteq c(\chi_{\lambda}^2)$ as required. Notice that  $\lambda'=((x+y)^b, x^{a-b})$. The above argument applies to $\lambda'$ when $x\geq 2$ and $a-b\geq 2$, whence we can conclude that $c(\sigma_{\mu(3)})\subseteq c(\chi_{\lambda'}^2)$. Since $\chi_{\lambda}^2=\chi_{\lambda'}^2$, we conclude that $c(\sigma_{\mu(3)})\subseteq c(\chi_{\lambda}^2)$. Therefore, we can assume that $\lambda$ satisfies (i) $b=1$ or $y=1$ and (ii) $a-b=1$ or $x=1$. It is enough to show that all the inner-products from (1)-(3) are positive in these remaining cases as well. We do this case-by-case.
		
		\medskip
		
		\textbf{Case I}: Assume that $b=1$ and $a=b+1=2$, whence $\lambda=(2^x,1^y)$. Note that since $n\geq 6$ and $\lambda\neq (2,1^{n-2})$, we have $x\geq 2$ and $y\geq 2$. Moreover, as $\lambda\neq (2^2,1^2)$, at least one of $x$ or $y$ is greater than two. We conclude that $d_3(\lambda')\geq 1$ and $d_2(\lambda')=2$, whence all the inner-products from (1)-(3) are positive.
		
		\textbf{Case II}: Assume that $b=1$ and $x=1$, whence $\lambda=(a,1^y)$. Since $\lambda\neq (n-1,1), (2,1^{n-2})$, we can assume $y\geq 2$ and $a\geq 3$. Clearly, $d_2(\lambda)=2$ and $d_2(\lambda')=1$. Once again, all the inner-products from (1)-(3) are positive.
		
		\textbf{Case III}: Assume that $y=1$ and $a=b+1$, whence $\lambda=((b+1)^x,b)$. Notice that $l=1$ in this case. Suppose $x>2$. Then $d_{3}(\lambda')\geq 1$, which also implies $d_2(\lambda')\geq 1$, and we conclude that all inner-products from (1)-(3) are positive. Thus, we may assume that $x\leq 2$. If $x=2$, then $\lambda=(b+1,b+1,b)$. Since $n\geq 6$, $b\geq 2$. Thus, $d_2(\lambda)=1$ and $d_2(\lambda')=1$, whence all inner-products from (1)-(3) are positive once again. If $x=1$, $\lambda=(b+1,b)$. Since $n\geq 6$, $b\geq 3$. In this case $d_3(\lambda)=1$, which implies that $d_2(\lambda)\geq 1$. We once again conclude that the inner-products from (1)-(3) are positive.

		\textbf{Case IV}: Assume that $y=1$ and $x=1$, whence $\lambda=(a,b)$. Clearly $b\geq 2$. If $b\geq 3$, then $d_3(\lambda)\geq 1$ which implies $d_2(\lambda)\geq 1$. Further if $a\geq b+2$, note that $d_2(\lambda)=2$. Once again all the inner-products from (1)-(3) are positive. Finally, if $a=b+1$, then $l=1$, whence the inner-products from (1)-(3) are positive once again. We can assume that $b=2$. Since $n\geq 6$ and $\lambda\neq (4,2)$, we assume $a\geq 5$. In this case, $d_3(\lambda)=1$ and $d_2(\lambda)=2$, whence all the inner-products from (1)-(3) are positive.
	\end{proof}
	
	\begin{lemma}\label{miller_lemma_2_improved}
		Let $n\geq 12$, $\lambda$ be a rectangular partition of $n$, and $\lambda\neq (n),(1^n)$. Then $c(\sigma_{\mu(6)})\subseteq c(\chi_{\lambda}^4)$.
	\end{lemma}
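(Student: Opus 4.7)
The plan is to strengthen Miller's \Cref{miller_lemma_2} by accounting for the extra constituents of $\sigma_{\mu(6)}$ relative to $\sigma_{\mu(5)}$. Since $c(\sigma_{\mu(6)})\setminus c(\sigma_{\mu(5)})$ consists precisely of the characters $\chi_{(n-6,\tau)}$ for $\tau\vdash 6$, and the condition $n\geq 12$ forces $\tau_1\leq n-6$ for every partition of $6$, there are exactly eleven characters to verify. For each such $\tau$, my goal is to find constituents $\chi_\mu, \chi_\nu$ of $\chi_\lambda^2$ such that the subcharacter $\chi_\mu\chi_\nu \leq \chi_\lambda^4$ contains $\chi_{(n-6,\tau)}$; combined with \Cref{miller_lemma_2}, this proves the claim.

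In the generic regime $\lambda=(a^b)$ with $\min(a,b)\ge 3$, direct computation from the definitions yields $d_1(\lambda)=d_1(\lambda')=1$, $d_2(\lambda)=d_2(\lambda')=d_3(\lambda)=d_3(\lambda')=1$, $k=1$, and $l=0$. Substituting into \Cref{multiplicity_firspart(n-3)_square_rep} gives $\langle\chi_\lambda^2,\chi_{(n-3,3)}\rangle=\langle\chi_\lambda^2,\chi_{(n-3,1^3)}\rangle=1$, and \Cref{formula_of_multiplicities_sigma_hook2} gives $\langle\chi_\lambda^2,\chi_{(n-2,2)}\rangle=1$. Hence $\chi_{(n-2,2)},\chi_{(n-3,3)},\chi_{(n-3,1^3)}\in c(\chi_\lambda^2)$, and every pairwise Kronecker product of two of these is a subcharacter of $\chi_\lambda^4$.

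Extracting the required constituents then reduces to a symmetric-function expansion. Writing $s_{(n-3,3)}=h_{(n-3,3)}-h_{(n-2,2)}$ and similar M\"obius inversions on \Cref{Young's_rule_application}, and using the sign-twist $\chi_{(n-3,1^3)}=\epsilon\cdot\chi_{(4,1^{n-4})}$ whenever helpful, each pairwise product may be converted via \Cref{Kroneceker_coeff_LR_coeff} into a signed combination of Schur products, which \Cref{Pierirule}, \Cref{Pierirule_1}, and \Cref{the_LR_rule} expand into Schur functions. A case analysis over the eleven partitions $\tau\vdash 6$ then verifies that each $\chi_{(n-6,\tau)}$ occurs with positive coefficient in at least one of the six pairwise products $\chi_{(n-2,2)}^2$, $\chi_{(n-2,2)}\chi_{(n-3,3)}$, $\chi_{(n-2,2)}\chi_{(n-3,1^3)}$, $\chi_{(n-3,3)}^2$, $\chi_{(n-3,3)}\chi_{(n-3,1^3)}$, and $\chi_{(n-3,1^3)}^2$.

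The boundary case $\min(a,b)=2$ is handled by conjugation: since $\chi_{\lambda'}=\epsilon\chi_\lambda$ and $\epsilon^4=\mathbbm{1}_{S_n}$, we have $\chi_\lambda^4=\chi_{\lambda'}^4$, so one may assume $\lambda=(n/2,n/2)$ with $n/2\geq 6$. Here $d_3(\lambda')=0$, which alters the multiplicities of $\chi_{(n-3,3)}$ and $\chi_{(n-3,1^3)}$ in $\chi_\lambda^2$ but keeps them (or near-substitutes such as $\chi_{(n-3,2,1)}$ or $\chi_{(n-4,4)}$) present as constituents, so the same expansion argument applies. The principal obstacle is the bookkeeping step in the previous paragraph: confirming positivity of the eleven Littlewood-Richardson-style coefficients by hand. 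The uniform $d$-profile of rectangular partitions keeps this calculation tractable but not automatic, and some care is needed to avoid double-counting cancellations between the two $h_\mu$-expansions of each $s_\nu$.
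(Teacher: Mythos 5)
Your overall architecture matches the paper's: reduce via \Cref{miller_lemma_2} to the eleven characters $\chi_{(n-6,\tau)}$, $\tau\vdash 6$, compute the $d$-invariants of a rectangle to extract low-degree constituents of $\chi_\lambda^2$ from \Cref{multiplicity_firspart(n-3)_square_rep} and \Cref{formula_of_multiplicities_sigma_hook2}, and then capture the eleven characters inside pairwise products of those constituents. However, there is a genuine gap: the central claim that every $\chi_{(n-6,\tau)}$ occurs in one of the six pairwise products of $\chi_{(n-2,2)}$, $\chi_{(n-3,3)}$, $\chi_{(n-3,1^3)}$ is not only left unverified (you flag it yourself as the ``principal obstacle'') but is actually false. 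Take $\tau=(3,2,1)$. Any constituent of $\chi_\alpha\chi_\beta$ satisfies $\nu_1\geq\alpha_1+\beta_1-n$, so the three products involving $\chi_{(n-2,2)}$ cannot reach first part $n-6$; for the remaining three products one computes, e.g.\ via \Cref{Kroneceker_coeff_LR_coeff} and \Cref{Young's_rule_application}, that the multiplicity of $\chi_{(n-6,3,2,1)}$ is $0$ in $\chi_{(n-3,3)}^2$, in $\chi_{(n-3,3)}\chi_{(n-3,1^3)}$ (the contribution $2$ from $h_{(n-3,1^3)}*s_{(n-3,3)}$ is exactly cancelled by $2\chi_{(n-3,2,1)}\chi_{(n-3,3)}$), and in $\chi_{(n-3,1^3)}^2$ (by \Cref{Kronecker_coeff_two_hook_shapes}(4) with $e=f=3$, $n_3=3$, $x=3$). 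So your three seed characters are insufficient even in the generic case $\min(r,s)\geq 3$.

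The boundary case is worse. When $r=2$ or $s=2$ you lose $\chi_{(n-3,3)}$ entirely ($\langle\chi_\lambda^2,\chi_{(n-3,3)}\rangle=0$ there), and your proposed ``near-substitute'' $\chi_{(n-3,2,1)}$ is never available: \Cref{multiplicity_firspart(n-3)_square_rep}(2) gives $\langle\chi_\lambda^2,\chi_{(n-3,2,1)}\rangle=0$ for every rectangle. With only $\{(n-2,2),(n-3,1^3)\}$ in hand, the first-part bound and \Cref{Kronecker_coeff_two_hook_shapes} rule out $\chi_{(n-6,6)}$, $\chi_{(n-6,5,1)}$, $\chi_{(n-6,4,2)}$, $\chi_{(n-6,4,1^2)}$, $\chi_{(n-6,3,3)}$, and $\chi_{(n-6,3,2,1)}$ from all pairwise products. (Conjugation buys you nothing here, since $\chi_\lambda^2=\chi_{\lambda'}^2$ already.) The paper's proof gets around exactly this by proving the additional fact $\chi_{(n-4,4)}\in c(\chi_\lambda^2)$ for every rectangle with $n\geq 12$ --- itself a nontrivial Littlewood--Richardson computation exploiting that skew shapes $\lambda/\mu$ of a rectangle are right-justified --- then invoking Murnaghan's expansions for the products with $(n-2,2)$, $(n-3,1^3)$, $(n-4,4)$, and finally handling the one residual character $\chi_{(n-6,3^2)}$ by a separate computation inside $\chi_{(n-4,4)}^2$ using \Cref{Kronecker_product_complete_symmetric}. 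You would need to incorporate $\chi_{(n-4,4)}$ (with proof of its membership) and an analogue of that last step to close the argument.
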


	\begin{proof}
		Let $\lambda=(r^s)\vdash n$, where $r,s\geq 2$. Note that $d_i(\lambda)=d_i(\lambda')=1$ for $i=1,2$. Also, $d_{3}(\lambda)=1$ if $r\geq 3$ and $d_3(\lambda')=1$ if $s\geq 3$. Finally, $k=1$ and $l=0$. Thus, \Cref{multiplicity_firspart(n-3)_square_rep} yields (i) $\langle \chi_{(n-3,1^3)}, \chi_{\lambda}^2 \rangle =1$, (ii) $\langle \chi_{\lambda}^2,\chi_{(n-3,3)}\rangle=1$ unless $r=2$ or $s=2$, and (iii) $\langle \chi_{\lambda}^2,\chi_{(n-3,2,1)}\rangle=0$. Using \Cref{formula_of_multiplicities_sigma_hook2}, $\langle \chi_{\lambda}^2,\chi_{(n-2,2)}\rangle=1$, $\langle \chi_{\lambda}^2, \chi_{(n-1,1)}\rangle=\langle \chi_{\lambda}^2, \chi_{(n-1,1^2)}\rangle=0$. Using Young's rule, we observe that 
		\[\langle \chi_{\lambda}^2,\chi_{(n-4,4)}\rangle=\langle \chi_{\lambda}^2,\sigma_{(n-4,4)}\rangle-\langle \chi_{\lambda}^2,\sigma_{(n-3,3)}\rangle=\begin{cases}
			\langle \chi_{\lambda}^2,\sigma_{(n-4,4)}\rangle-2 & \text{if $r=2$, or $s=2$}\\
			\langle \chi_{\lambda}^2,\sigma_{(n-4,4)}\rangle-3 & \text{otherwise}
		\end{cases}.\]
		By \Cref{multiplicity_square_partition_rep}, $\langle \chi_{\lambda}^2,\sigma_{(n-4,4)}\rangle=\langle \res_{S_{n-4}\times S_4}^{S_n}\chi_{\lambda},\res_{S_{n-4}\times S_4}^{S_n}\chi_{\lambda}\rangle$. To compute this inner-product, we need to  compute $c^{\lambda}_{\mu\nu}$ where $\nu \vdash 4$ (see \Cref{LR_coeffcients}). Since $\lambda$ is rectangular, for any $\mu\subseteq \lambda$, the skew diagram $T_{\lambda/\mu}$ must be right justified. Thus, using \Cref{the_LR_rule}, we obtain the following:
		\begin{itemize}
			\item there exists $!\mu \vdash n-4$ such that $c^{\lambda}_{\mu,(4)}=1$ provided $r\geq 4$, otherwise $c_{\mu,(4)}^{\lambda}=0$.
			\item there exists $!\mu \vdash n-4$ such that $c^{\lambda}_{\mu,(1^4)}=1$ provided $s\geq 4$, otherwise $c_{\mu,(1^4)}^{\lambda}=0$.
			\item there exists $!\mu \vdash n-4$ such that $c^{\lambda}_{\mu,(3,1)}=1$ provided $r\geq 3$, otherwise $c_{\mu,(3,1)}^{\lambda}=0$.
			\item there exists $!\mu \vdash n-4$ such that $c^{\lambda}_{\mu,(2,2)}=1$ provided $r\geq 3$ or $s\geq 3$, otherwise $c_{\mu,(2,2)}^{\lambda}=0$.
			\item there exists $!\mu \vdash n-4$ such that $c^{\lambda}_{\mu,(2,1,1)}=1$ provided $s\geq 3$, otherwise $c_{\mu,(2,1,1)}^{\lambda}=0$.
		\end{itemize}
	Since $n\geq 12$, from the above informations, it is easy to conclude that $\langle \chi_{\lambda}^2,\chi_{(n-4,4)}\rangle>0$. This yields that $c(\chi_{\lambda}^4)\supseteq c(\chi_{\mu}\chi_{\nu})$, where $\mu$ and $\nu$ can be chosen from the set $\{(n-2,2), (n-3,1^3), (n-4,4)\}$. Due to \Cref{miller_lemma_2}, to get the final result, it is enough to show that $\chi_{\nu}\in c(\chi_{\lambda}^4)$ for each $\nu\vdash n$ with the first part equal to $n-6$. By \cite[Section 2, Eqn. 24,53,54]{mu1}, we conclude that all such $\chi_{\nu}$'s occur with the possible exception of $\chi_{(n-6,3^2)}$. On the other hand, using the fact that $\chi_{(n-4,4)}^2=\sigma_{(n-4,4)}^2-2\sigma_{(n-4,4)}\sigma_{(n-3,3)}+\sigma_{(n-3,3)}^2$ (see \Cref{Youngrule}), an easy check using \Cref{Kronecker_product_complete_symmetric} yields that $\chi_{(n-6,3^2)}\in c(\chi_{(n-4,4)}^2)$, whence our proof is complete.
	\end{proof}
	For $\lambda, \mu\vdash n$, let $|\lambda \setminus \mu|$ be the number of boxes of $T_{\lambda}$ that lie outside of $T_{\mu}$. For example, if $\lambda=(4,3,2,2)$ and $\mu=(3,3,3,1,1)$, then $\mid \lambda \setminus \mu \mid =2$. The next lemma is an easy application of \Cref{complete_schur_kronecker_expansion} and the Pieri rule.
	
	\begin{lemma}\cite[Lemma 10]{mi}\label{hook_constituents}
		Let $\lambda\vdash n$, $1\leq k\leq n-2$, and $r\in \mathbb{N}$. Then $c(\sigma_{\mu(k)}^r\chi_{\lambda})=\{\chi_{\mu}\mid  |\lambda \setminus \mu|\leq kr\}.$
	\end{lemma}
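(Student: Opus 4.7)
The plan is to induct on $r$, using the projection formula and the branching rule to reduce the problem to a combinatorial statement about partitions. Identifying the Young subgroup $S_{n-k}\times S_1^{k}$ with $S_{n-k}$, one has $\sigma_{\mu(k)}=\mathrm{Ind}_{S_{n-k}}^{S_n}\mathbbm{1}$, so \Cref{induction_commutes_with_tensor} gives $\sigma_{\mu(k)}\psi=\mathrm{Ind}_{S_{n-k}}^{S_n}\mathrm{Res}_{S_{n-k}}^{S_n}\psi$ for any character $\psi$ of $S_n$, and iterating yields $\sigma_{\mu(k)}^r\chi_\lambda=(\mathrm{Ind}_{S_{n-k}}^{S_n}\mathrm{Res}_{S_{n-k}}^{S_n})^r\chi_\lambda$.

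For the base case $r=1$, I would iterate the branching rule (\Cref{branching_rule}) $k$ times to obtain
$$\mathrm{Res}_{S_{n-k}}^{S_n}\chi_\lambda=\sum_{\nu\vdash n-k,\,\nu\subseteq\lambda}f^{\lambda/\nu}\chi_\nu,\qquad \mathrm{Ind}_{S_{n-k}}^{S_n}\chi_\nu=\sum_{\mu\vdash n,\,\nu\subseteq\mu}f^{\mu/\nu}\chi_\mu,$$
where $f^{\lambda/\nu}$ counts standard Young tableaux of shape $\lambda/\nu$. Since all coefficients are non-negative, $\chi_\mu$ is a constituent of $\sigma_{\mu(k)}\chi_\lambda$ iff some $\nu\vdash n-k$ satisfies $\nu\subseteq\lambda\cap\mu$, equivalently $|\lambda\cap\mu|\geq n-k$, i.e., $|\lambda\setminus\mu|\leq k$. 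For the inductive step, writing $\sigma_{\mu(k)}^{r-1}\chi_\lambda$ as a non-negative combination of irreducibles and applying the base case to each constituent, I get
$$c(\sigma_{\mu(k)}^r\chi_\lambda)=\{\chi_\tau : \exists\,\mu\vdash n\text{ with }|\lambda\setminus\mu|\leq k(r-1)\text{ and }|\mu\setminus\tau|\leq k\}.$$

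It remains to show this set equals $\{\chi_\tau : |\lambda\setminus\tau|\leq kr\}$. The inclusion $\subseteq$ is the triangle inequality $|\lambda\setminus\tau|\leq|\lambda\setminus\mu|+|\mu\setminus\tau|$, valid because each box in $T_\lambda\setminus T_\tau$ lies either inside or outside $T_\mu$. For $\supseteq$, given $|\lambda\setminus\tau|\leq kr$ I build the required $\mu$ by iterating a \emph{swap lemma}: for distinct partitions $\alpha,\beta\vdash n$, there exists a partition $\gamma\vdash n$ with $|\alpha\setminus\gamma|=1$ and $|\gamma\setminus\beta|=|\alpha\setminus\beta|-1$. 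Concretely, let $p=\max\{i:\alpha_i>\beta_i\}$ and $j=\min\{i:\alpha_i<\beta_i\}$, and define $\gamma$ by $\gamma_p=\alpha_p-1$, $\gamma_j=\alpha_j+1$, and $\gamma_i=\alpha_i$ otherwise. Applying this swap $\max(0,|\lambda\setminus\tau|-k)$ times starting from $\lambda$ produces the desired $\mu$.

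The main technical obstacle is verifying that $\gamma$ in the swap lemma is a valid partition. A preliminary observation is that $j>p$: if $j<p$ then $\alpha_j<\beta_j\leq\beta_p<\alpha_p$ contradicts the monotonicity $\alpha_j\geq\alpha_p$, while $j=p$ is ruled out by $\alpha_p>\beta_p$. When $j>p+1$ the two modified rows are separated by an unchanged row, and partitionhood follows from the facts that $(p,\alpha_p)$ is removable in $\alpha$ (since $\alpha_{p+1}\leq\beta_{p+1}\leq\beta_p<\alpha_p$) and $(j,\alpha_j+1)$ is addable (since $\alpha_{j-1}\geq\beta_{j-1}\geq\beta_j>\alpha_j$). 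The delicate case is $j=p+1$, where one needs $\alpha_p-1\geq\alpha_{p+1}+1$; this is forced by the chain $\alpha_p\geq\beta_p+1\geq\beta_{p+1}+1\geq\alpha_{p+1}+2$.
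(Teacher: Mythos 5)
Your argument for $r=1$ is essentially the paper's, phrased representation-theoretically instead of through symmetric functions: the paper expands $h_{\mu(k)}*s_{\lambda}=\sum s_{\nu}s_1^k$ over chains $\nu=\nu_0\subseteq\cdots\subseteq\nu_k=\lambda$ adding one box at a time (which is exactly $\ind_{S_{n-k}}^{S_n}\res_{S_{n-k}}^{S_n}$ unwound through the branching rule) and then applies the Pieri rule, arriving at the same criterion $|\lambda\setminus\mu|\le k$. For $r\ge 2$ the paper says only that ``the result follows by induction''; your triangle inequality together with the swap lemma is a legitimate way to supply those omitted details, and the bookkeeping in the application (apply the swap $\max(0,|\lambda\setminus\tau|-k)$ times against $\beta=\tau$, so that $|\lambda\setminus\mu|\le |\lambda\setminus\tau|-k\le k(r-1)$ and $|\mu\setminus\tau|\le k$) is correct.

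There is, however, one false statement in your verification of the swap lemma: the ``preliminary observation'' that $j>p$ is not true, and the inequality you invoke for it ($\beta_j\le\beta_p$ when $j<p$) runs in the wrong direction for a weakly decreasing sequence. A counterexample is $\alpha=(3,3)$, $\beta=(4,2)$, where $p=2$ and $j=1$. Consequently your case analysis ($j>p+1$ versus $j=p+1$) does not exhaust the possibilities. The construction itself still works: for $j<p$ the only local checks are $\alpha_{j-1}\ge\alpha_j+1$ (minimality of $j$, as you already argue), $\alpha_p-1\ge\alpha_{p+1}$ (maximality of $p$), and, in the adjacent case $j=p-1$, the inequality $\gamma_{p-1}\ge\gamma_p$, i.e.\ $\alpha_j+1\ge\alpha_p-1$, which is immediate from $\alpha_j\ge\alpha_p$. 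The box removed at $(p,\alpha_p)$ still lies outside $T_\beta$ and the box added at $(j,\alpha_j+1)$ still lies inside it, so the counts $|\alpha\setminus\gamma|=1$ and $|\gamma\setminus\beta|=|\alpha\setminus\beta|-1$ are unaffected. So the gap is real but easily closed; once the case $j<p$ is added, your proof is complete.
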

	
	\begin{proof}
		We prove it when $r=1$. By \Cref{complete_schur_kronecker_expansion}, 
		$$h_{\mu(k)}*s_{\lambda}=\sum s_{\nu}s_1^k,$$
		where the sum runs over all sequences $\nu_0 \subseteq \nu_1 \subseteq \cdots \subseteq \nu_k=\lambda$ such that $\nu_0=\nu\subseteq \lambda$ is a partition of $n-k$, and $|\nu^i/\nu^{i-1}|=1$ for every $1\leq i\leq k$. Using the Pieri rule, we conclude that if $\langle s_{\nu}s_1^k,s_{\mu}\rangle >0$, then $|\lambda\setminus \mu|\leq k$. Conversely, if $\mu \vdash n$ is such that $|\lambda\setminus \mu|\leq k$, then $\mu\cap \lambda$ is a partition of $t$ where $t\geq n-k$. Thus, we can choose $\nu\vdash n-k$ such that $\nu \subseteq \lambda\cap \mu \subseteq \lambda$, and hence $s_{\nu}s_1^k$ appears in the above summand. Once again the Pieri rule implies that $\langle s_{\nu}s_1^k,s_{\mu}\rangle >0$. This yields the result for $r=1$. For $r\geq 2$, the result follows by induction.
	\end{proof}
	\begin{proof}[\textbf{Proof of \Cref{Theorem_1}}]
		If $n\leq 11$, the theorem can be verified directly using SageMath (\cite{sam}). So we may assume $n\geq 12$. Let $\lambda\vdash n$ be non-rectangular. Set $n=3k+r$, where $0\leq r\leq 2$ and $k\geq 3$. Then $c(\sigma_{\mu(3)}^k)\subseteq c(\chi_{\lambda}^{2k})$. If $r\in \{0,1\}$, then by \Cref{covering_number_sigma_hook}, $c(\sigma_{\mu(3)}^k)=\irr(S_n)$ and hence $c(\chi_{\lambda}^{2k})=\irr(S_n)$, whence the result follows. If $r=2$, notice that $c(\sigma_{\mu(3)}^k\chi_{\lambda})\subseteq c(\chi_{\lambda}^{2k+1})$ and $c(\sigma_{\mu(3)}^k\chi_{\lambda})=\irr(S_n)$ by \Cref{hook_constituents}. Hence $c(\chi_{\lambda}^{2k+1})=\irr(S_n)$ and once again the result follows. Now we consider the rectangular partitions. Let $n=6k+r$ where $0\leq r\leq 5$ and $k\geq 1$. By \Cref{miller_lemma_2_improved}, we obtain 
		\begin{equation}\label{main_proof_eqn}
			c(\sigma_{\mu(6)}^k)\subseteq c(\chi_{\lambda}^{4k}).
		\end{equation}
		If $r\in \{0,1\}$, then by \Cref{covering_number_sigma_hook}, $c(\sigma_{\mu(6)}^k)=\irr(S_n)$ which implies $c(\chi_{\lambda}^{4k})=\irr(S_n)$, whence the result follows. If $r=2$, then $\lceil \frac{2(n-1)}{3} \rceil=4k+1$. From \Cref{main_proof_eqn}, we get $c(\sigma_{\mu(6)}^k\chi_{\lambda})\subseteq c(\chi_{\lambda}^{4k+1})$.
		By \Cref{hook_constituents}, it is easy to check that $c(\sigma_{\mu(6)}^k\chi_{\lambda})=\irr(S_n)$, from which the result once again follows. Now we assume that $r\in \{3,4,5\}$. We have seen in the proof of \Cref{miller_lemma_2_improved} that $\chi_{(n-3,1^3)}\in c(\chi_{\lambda}^2)$. Thus, from \Cref{main_proof_eqn}, we get $c(\sigma_{\mu(6)}^k\chi_{(n-3,1^3)})\subseteq c(\chi_{\lambda}^{4k+2})$. Once again using \Cref{hook_constituents} and the fact that $n\geq 12$, it is easy to deduce that $c(\sigma_{\mu(6)}^k\chi_{(n-3,1^3)})=\irr(S_n)$ when $r\in \{3,4\}$. Moreover, when $r=5$, we get that $c(\sigma_{\mu(6)}^k\chi_{(n-3,1^3)})=\irr(S_n)\setminus\{\epsilon\}$. Since $\chi_{(4,1^{n-4})}\in c(\sigma_{\mu(6)}^k)$ (once again by \Cref{hook_constituents}), we conclude that $\epsilon \in c(\sigma_{\mu(6)}^k\chi_{(n-3,1^3)})$. Overall, if $r\in \{3,4,5\}$, we get that $c(\chi_{\lambda}^{4k+2})\supseteq c(\sigma_{\mu(6)}^k\chi_{(n-3,1^3)})=\irr(S_n)$. Since $\lceil \frac{2(n-1)}{3} \rceil=4k+2$ when $r=3,4$ and $\lceil \frac{2(n-1)}{3} \rceil=4k+3$ when $r=5$, our result follows.
		
		\medskip
		
		The final assertion follows since $\ccn(\chi_{(n-2,2)};S_n)\geq \ccn(\sigma_{(n-2,2)};S_n)=\left\lceil \frac{2(n-1)}{3} \right\rceil$ by \Cref{upperbound_covering_number_sigma_tworow}. The fact that $\ccn(\chi_{(2^2,1^{n-4})};S_n)=\left\lceil \frac{2(n-1)}{3} \right\rceil$ follows since $\ccn(\chi_{\lambda};S_n)=\ccn(\chi_{\lambda'};S_n)$.
	\end{proof}
	
	\begin{proof}[\textbf{Proof of \Cref{Theorem_2}}]
		Let $k=\frac{n+1}{2}$ and $\lambda=(k,k-1)$. By \cite[Corollary 4.1]{gwxz},  we get that $\displaystyle \chi_{\lambda}^2=\sum_{l(\mu)\leq 4}\chi_{\mu}$. It follows from \Cref{Kronecker_product_complete_symmetric} that $c(\sigma_{\lambda}^2)=c(\chi_{\lambda}^2)$. By the discussion before \Cref{upperbound_covering_number_sigma_tworow}, we have $\ccn(\sigma_{\lambda};S_n)\geq \lceil \log_2 n \rceil.$ Let $\alpha_0=(n)$. For each $r\geq 1$, let $\alpha_r$ be the partition obtained from the $\alpha_{r-1}\times \lambda$ matrix $A^r$ (by a decreasing rearrangement of its entries) chosen in the following way: If $\alpha_{r-1}=(a_1,a_2,\ldots, a_s)$, then for each $i\geq 1$, choose $A^r_{i,1}=\lceil \frac{a_i}{2} \rceil$ and $A^r_{i,2}=\lfloor \frac{a_i}{2} \rfloor$, until a unique value of $i$, say $x$ (which may or may not exist), from which point the choice of the entries in each row is simply the reverse of the previous choice. The unique value $x$ is obtained in such a  way that the first column sum equals $k$ and the second column sum equals $k-1$. It is clear that when $r=\lceil \log_2 n \rceil-1$, the matrix $A^{r+1}$ obtained by the above choice has all its entries to be 1, and hence the partition obtained from $A^{r+1}$ is $\alpha^{r+1}=(1^n)$. By the discussion prior to \Cref{covering_number_sigma_hook}, we conclude that $\ccn(\sigma_{\lambda};S_n)\leq \lceil \log_2 n \rceil$. Thus, $\ccn(\sigma_{\lambda};S_n)=\lceil \log_2 n \rceil$. Since $n\geq 5$, we get $\ccn(\sigma_{\lambda};S_n)\geq 3$, whence \Cref{basic_lemma_3} implies the result.
	\end{proof}
	
	As an application of \Cref{miller_lemma_1_improved}, we prove that every irreducible character of $S_n$ appears as a constituent of the product of all (non-linear) hook characters.
	
	\begin{theorem}\label{product_of_hooks}
		Let $n\geq 5$. Then $\displaystyle c\left(\prod_{1\leq k\leq n-2}\chi_{\mu(k)}\right)=\irr(S_n)$.
	\end{theorem}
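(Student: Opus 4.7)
My plan is to invoke the pairing $\mu(k)' = \mu(n-1-k)$ and the identity $\chi_{\lambda'} = \epsilon\chi_{\lambda}$ to write $\chi_{\mu(k)}\chi_{\mu(n-1-k)} = \epsilon\,\chi_{\mu(k)}^{2}$. Setting $m = \lfloor n/2\rfloor$ and pairing $\mu(k) \leftrightarrow \mu(n-1-k)$ for $k = 1,\dots,m-1$, with the self-conjugate middle hook $\mu(m)$ left alone when $n$ is odd, the product becomes $\epsilon^{m-1}\prod_{k=1}^{m-1}\chi_{\mu(k)}^{2}$ when $n$ is even, and $\epsilon^{m-1}\chi_{\mu(m)}\prod_{k=1}^{m-1}\chi_{\mu(k)}^{2}$ when $n$ is odd. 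Since tensoring with $\epsilon$ is a bijection on $\irr(S_n)$, it suffices to show that the unsigned version of the product covers $\irr(S_n)$.

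For $2 \leq k \leq m-1$, the hook $\mu(k)$ is non-rectangular and does not lie in the excluded set of \Cref{miller_lemma_1_improved}, yielding $c(\chi_{\mu(k)}^{2}) \supseteq c(\sigma_{\mu(3)})$. Iterating \Cref{basic_lemma_1}, $c(\prod_{k=2}^{m-1}\chi_{\mu(k)}^{2}) \supseteq c(\sigma_{\mu(3)}^{m-2})$. The remaining factor decomposes as $\chi_{\mu(1)}^{2} = \chi_{(n)} + \chi_{\mu(1)} + \chi_{(n-2,2)} + \chi_{\mu(2)}$ (by \Cref{standard_character_kronecker}), contributing both $\mathbbm{1}$ and $\chi_{\mu(2)}$; hence the constituents of the full product include those of $\sigma_{\mu(3)}^{m-2}$, of $\chi_{\mu(2)}\sigma_{\mu(3)}^{m-2}$, and—when $n$ is odd—also of $\chi_{\mu(m)}\sigma_{\mu(3)}^{m-2}$.

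Applying \Cref{hook_constituents}, these three sets of constituents are described explicitly as $\{\chi_\nu : \nu_1 \geq n - 3(m-2)\}$, $\{\chi_\nu : |\mu(2)\setminus\nu| \leq 3(m-2)\}$, and $\{\chi_\nu : |\mu(m)\setminus\nu| \leq 3(m-2)\}$ respectively. Using $|\mu(2)\setminus\nu| = n + 1 - \min(\nu_1, n-2) - \min(l(\nu), 3)$ and $\max_{\nu\vdash n}|\mu(m)\setminus\nu| = m$ (attained at $\nu \in \{(n), (1^n)\}$ when $n$ is odd), an elementary case check on $\nu_1$ and $l(\nu)$ shows the union is $\irr(S_n)$ for every $n \geq 6$. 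Partitions with small first part—most notably $(1^n)$—that escape $c(\sigma_{\mu(3)}^{m-2})$ are captured either by the $\chi_{\mu(2)}$-cover (for $n \geq 8$, and for the four partitions of $6$ with $\nu_1\le 2$ when $n=6$) or, in the single borderline instance $n = 7$ where $|\mu(2)\setminus(1^7)|=4>3$, by the middle-hook cover since $|\mu(3)\setminus(1^7)|=3=3(m-2)$.

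The case $n = 5$ is exceptional because $m - 2 = 0$ collapses all three covers; here I would argue directly. \Cref{formula_of_multiplicities_sigma_hook2}, together with the self-conjugacy of $(3,1,1)$ which forces $\langle\chi_{(3,1,1)}^{2}, \chi_\nu\rangle = \langle\chi_{(3,1,1)}^{2}, \chi_{\nu'}\rangle$ for all $\nu \vdash 5$, yields $c(\chi_{(3,1,1)}^{2}) = \irr(S_5)$. Since $\chi_{(3,1,1)} \in c(\chi_{(4,1)}^{2})$, the product $\chi_{(4,1)}\chi_{(3,1,1)}\chi_{(2,1^3)} = \epsilon\,\chi_{(4,1)}^{2}\chi_{(3,1,1)}$ contains $\epsilon\,\chi_{(3,1,1)}^{2}$, which covers $\irr(S_5)$. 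The main obstacle I anticipate is the bookkeeping in paragraph three—checking that every partition of small first part or bounded column length lies in at least one of the three explicit covers, with the borderline $n = 7$ case needing the middle-hook cover to catch $(1^7)$ and nothing else slipping through.
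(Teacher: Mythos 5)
Your proposal is correct and follows essentially the same route as the paper: pair each hook with its conjugate to reduce the product to $\epsilon$-twists of squares $\chi_{\mu(k)}^2$, apply \Cref{miller_lemma_1_improved} to each middle square to extract $\sigma_{\mu(3)}$, and finish with the hook-permutation-character machinery of \Cref{hook_constituents} (the paper instead absorbs $\chi_{\mu(1)}^2$ into $\sigma_{\mu(2)}$ and reduces to a single power of $\sigma_{\mu(1)}$ via \Cref{covering_number_sigma_hook}, while you run a union-of-covers case check; both are fine, and your $n=5$ and $n=7$ treatments are valid). One caveat: your auxiliary claim that $\max_{\nu\vdash n}|\mu(m)\setminus\nu|=m$ is false in general --- for near-square $\nu$ one gets $|\mu(m)\setminus\nu|\approx n-2\sqrt{n}+1$, which exceeds $m$ for large odd $n$ --- but this does not damage the argument, since the middle-hook cover is only ever invoked for $(1^7)$ when $n=7$, where the specific value $|\mu(3)\setminus(1^7)|=3$ is correct.
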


	\begin{proof}
		Let $n\geq 6$ be even. Then, using \Cref{miller_lemma_1_improved}, we get the following: $$\displaystyle c\left(\chi_{(n-1,1)}^2\prod_{2\leq k\leq \frac{n}{2}-1}\chi_{\mu(k)}^2\right)\supseteq c(\sigma_{\mu(2)}\sigma_{\mu(3)}^{\frac{n}{2}-2})=c(\sigma_{\mu(1)}^{\frac{3n}{2}-4})=\irr(S_n),$$ as $\frac{3n}{2}-4\geq n-1$ (see \Cref{covering_number_sigma_hook} and \Cref{basic_lemma_2}). Assume now that $n\geq 9$ is odd. Once again, using \Cref{miller_lemma_1_improved}, we get the following:
		$$c\left(\chi_{(n-1,1)}^2\chi_{\mu(\frac{n-1}{2})}\prod_{2\leq k\leq \frac{n-3}{2}}\chi_{\mu(k)}^2\right)\supseteq c(\sigma_{\mu(2)}\sigma_{\mu(3)}^{\frac{n-5}{2}})=c(\sigma_{\mu(1)}^{\frac{3n-11}{2}})=\irr(S_n),$$
		since $\frac{3n-11}{2}\geq n-1$ (see \Cref{covering_number_sigma_hook} and \Cref{basic_lemma_2}). When $n=5,7$, the result follows by direct computation.
	\end{proof}

	We end this section with the following conjecture:
	
	\begin{conjecture}\label{conjecture_two_row_partition}
		Let $n\geq 5$ and $\lambda=(n-k,k)$ where $1\leq k\leq \lfloor \frac{n}{2} \rfloor$. The following hold.
		\begin{enumerate}
			\item If $n$ is odd, then $c(\chi_{\lambda}^2)=c(\sigma_{\lambda}^2)$. In particular, $\ccn(\chi_{\lambda};S_n)=\ccn(\sigma_{\lambda};S_n)$.
			\item Let $n$ be even, and $\lambda\neq (\frac{n}{2},\frac{n}{2})$. If $1\leq k<\lceil \frac{n}{4} \rceil$, then $c(\chi_{\lambda}^2)=c(\sigma_{\lambda}^2)$. Otherwise,  $c(\chi_{\lambda}^3)=c(\sigma_{\lambda}^3)$. In particular, $\ccn(\chi_{\lambda};S_n)=\ccn(\sigma_{\lambda};S_n)$.
			\item If $n$ is even, $\lceil \log_2 n \rceil\leq \ccn(\chi_{(\frac{n}{2},\frac{n}{2})};S_n)\leq \lceil \log_2n \rceil +1.$
		\end{enumerate}
	\end{conjecture}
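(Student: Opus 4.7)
The plan is to extend the techniques used for \Cref{Theorem_1} and \Cref{Theorem_2} from the boundary values $k=2$ and $k=(n-1)/2$ to the whole range $1\le k\le \lfloor n/2\rfloor$. The unifying identity, immediate from \Cref{Youngrule}, is
\[
\chi_{(n-k,k)}=\sigma_{(n-k,k)}-\sigma_{(n-k+1,k-1)}.
\]
Writing $\lambda=(n-k,k)$ and $\lambda^{-}=(n-k+1,k-1)$ and expanding,
\[
\chi_\lambda^r=\sum_{j=0}^{r}(-1)^j\binom{r}{j}\sigma_\lambda^{\,r-j}\sigma_{\lambda^-}^{\,j},
\]
and each summand is a product of partition characters that decomposes explicitly via \Cref{Kronecker_product_complete_symmetric} into a sum over two-column non-negative integer matrices. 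Since $c(\chi_\lambda^r)\subseteq c(\sigma_\lambda^r)$ is automatic from \Cref{basic_lemma_1}, to prove parts (1) and (2) it is enough to show that every constituent of $\sigma_\lambda^{\,r}$ survives the alternating sum for $r=2$ (in the ``odd'' and ``small-$k$'' cases) or $r=3$ (in the ``dense'' case), and then to appeal to \Cref{basic_lemma_3}.

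For part (1), when $n$ is odd, I would first enumerate $c(\sigma_\lambda^2)$ by constructing, for every admissible $\mu$ with $l(\mu)\le 4$, an explicit $\lambda\times\lambda$ matrix whose row-sum partition is $\mu$, as in the proof of \Cref{Theorem_2}. Then, using the multiplicity formulas of \Cref{formula_of_multiplicities_sigma_hook2} and \Cref{multiplicity_firspart(n-3)_square_rep} (and extending them to arbitrary $\mu$ with $l(\mu)\le 4$ via \Cref{multiplicity_square_partition_rep} and the LR rule), I would show that the cross term $-2\sigma_\lambda\sigma_{\lambda^-}+\sigma_{\lambda^-}^2$ cannot cancel the positive contribution from $\sigma_\lambda^2$. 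The parity mismatch between $n-k$ and $k$ (forced by $n$ odd) should rule out a class of ``obstruction matrices'' that would otherwise produce exact cancellations. For part (2), the dichotomy at $k=\lceil n/4\rceil$ reflects the point where $\lambda^{-}$ becomes too close to $\lambda$ and certain constituents do cancel in $\chi_\lambda^2$; I would treat the sparse regime $k<\lceil n/4\rceil$ exactly as in (1) with a finer case split on $n\bmod 4$, and in the dense regime mimic \Cref{miller_lemma_1_improved} and \Cref{miller_lemma_2_improved} to show that $c(\sigma_{\mu(k_0)})\subseteq c(\chi_\lambda^3)$ for a suitable hook width $k_0$, after which \Cref{hook_constituents} completes the argument.

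For part (3), the lower bound $\lceil\log_2 n\rceil$ is immediate from the discussion preceding \Cref{upperbound_covering_number_sigma_tworow} together with $\ccn(\chi_\lambda;S_n)\ge \ccn(\sigma_\lambda;S_n)$. For the upper bound $\lceil\log_2 n\rceil+1$, the idea is to run the doubling algorithm of \Cref{Theorem_2} with column-sum vector $(n/2,n/2)$: starting from $\alpha_0=(n)$, at each step split every part as evenly as possible, reaching $(1^n)$ in at most $\lceil\log_2 n\rceil$ steps (the algorithm may require one extra step when $n$ is not a power of $2$ because forcing equal column sums can create a parity obstruction). This gives $c(\sigma_\lambda^{\lceil\log_2 n\rceil+1})=\irr(S_n)$; one then verifies that after the $r$-th power, the set of constituents is so saturated that a final multiplication by $\chi_\lambda$ absorbs the defect $\sigma_\lambda-\sigma_{\lambda^-}$ without eliminating anything, giving $c(\chi_\lambda^{\lceil\log_2 n\rceil+1})=\irr(S_n)$.

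The main obstacle is the combinatorial analysis of cancellations in the dense regime, where $k$ is close to $n/2$: here $\sigma_{\lambda^-}$ shares almost all of its irreducible constituents with $\sigma_\lambda$, and the naive estimate $c(\chi_\lambda^r)\supseteq c(\sigma_\lambda^r)\setminus c(\sigma_\lambda^{r-1}\sigma_{\lambda^-})$ is much too weak. Unlike the extremal cases treated in the paper ($k=1,2$ via \Cref{formula_of_multiplicities_sigma_hook2} and \Cref{multiplicity_firspart(n-3)_square_rep}; $k=(n-1)/2$ via \cite[Corollary~4.1]{gwxz}), no closed-form expression for $s_{(n-k,k)}*s_{(n-k,k)}$ is currently known at intermediate $k$, so establishing non-cancellation in this regime would likely require either a new combinatorial identity for this Kronecker product or a genuinely new inductive input on $k$. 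This is precisely the step for which the SageMath evidence is merely suggestive, and where a full proof of the conjecture is out of reach of the present methods.
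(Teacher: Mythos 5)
This statement is stated in the paper as a conjecture: the authors offer no proof, only the evidence of \Cref{Theorem_1}, \Cref{Theorem_2}, and SageMath computations. So there is no argument of theirs to compare yours against, and your proposal should be judged on its own terms. On those terms it is a research plan, not a proof, and you say as much yourself in the final paragraph: the central step --- showing that the constituents of $\sigma_\lambda^r$ survive the alternating sum $\chi_\lambda^r=\sum_j(-1)^j\binom{r}{j}\sigma_\lambda^{r-j}\sigma_{\lambda^-}^j$ --- requires positivity information about $s_{(n-k,k)}*s_{(n-k,k)}$ at intermediate $k$ that is not available from the tools in the paper. The extremal cases the paper does handle ($k=1,2$ via \Cref{formula_of_multiplicities_sigma_hook2} and \Cref{multiplicity_firspart(n-3)_square_rep}, and $k=\lfloor n/2\rfloor$ for odd $n$ via the closed form of \cite[Corollary 4.1]{gwxz}) are precisely the ones where such a formula exists, and your plan correctly identifies that the gap is not a matter of bookkeeping but of a missing combinatorial input. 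The setup is sound --- the identity $\chi_{(n-k,k)}=\sigma_{(n-k,k)}-\sigma_{(n-k+1,k-1)}$ is correct by Young's rule, and the reduction to \Cref{basic_lemma_3} is the right frame --- but the conjecture remains open after your proposal exactly as it was before it.

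One concrete error beyond the acknowledged gap: your argument for the upper bound in part (3) does not work even in outline. Running the doubling algorithm shows $c(\sigma_{(\frac n2,\frac n2)}^{r})=\irr(S_n)$ for some $r$ near $\lceil\log_2 n\rceil$, but since $c(\chi_\lambda^r)\subseteq c(\sigma_\lambda^r)$, controlling $\ccn(\sigma_\lambda;S_n)$ only yields a \emph{lower} bound on $\ccn(\chi_\lambda;S_n)$ (this is exactly the inequality $\ccn(\chi_\lambda;S_n)\geq\ccn(\sigma_\lambda;S_n)$ stated after \Cref{basic_lemma_2}). The sentence claiming that ``a final multiplication by $\chi_\lambda$ absorbs the defect $\sigma_\lambda-\sigma_{\lambda^-}$ without eliminating anything'' is not a verification but a restatement of the thing to be proved; to get the upper bound you would need to exhibit some power $i\leq\ccn(\sigma_\lambda;S_n)$ with $c(\chi_\lambda^i)=c(\sigma_\lambda^i)$ (so that \Cref{basic_lemma_3} applies), or argue directly about constituents of $\chi_\lambda^{\lceil\log_2 n\rceil+1}$, and neither is done. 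Note also that for $\lambda=(\frac n2,\frac n2)$ the conjecture deliberately leaves a window of width one, which is a signal from the authors' computations that the clean equality $c(\chi_\lambda^i)=c(\sigma_\lambda^i)$ may genuinely fail there; any proof strategy for part (3) has to accommodate that.
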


	\section{Proof of \Cref{Theorem_3}}
	
	In this section, we prove \Cref{Theorem_3}. Recall that we denote a hook partition $(n-r,1^r)$ by $\mu(r)$. The Durfee rank of a partition $\lambda$ is the size of the largest square contained in it. In particular, partitions with Durfee rank 1 are hook partitions $\mu(r)$, where $0\leq r\leq n-1$. Partitions with Durfee rank 2 are called double hooks. They can be written in the form $(n_4,n_3,2^{d_2},1^{d_1})$. Here we assume $n_4\geq n_3\geq 2$. For  a proposition $P$, the notation $((P))$ takes the value 1 if $P$ is true; otherwise, it takes the value 0. The Kronecker coefficients $g_{\mu\nu\lambda}$ when two of the partitions are hook shapes and the other one is arbitrary were determined by Remmel (\cite[Theorem 2.1]{re1}) and  reproved by Rosas (\cite[Theorem 3]{ro}) (a slightly different statement). We state this result by mixing both to suit our needs. 
	
	\begin{theorem}\label{Kronecker_coeff_two_hook_shapes}
		Let $\mu,\nu,\lambda \vdash n$, where $\mu=(n-e,1^e)$ and $\nu=(n-f,1^f)$ are hook-shaped partitions. Assume that $f\geq e\geq 1$ and $f+e\leq n-1$. Then the Kronecker coefficients $g_{\mu\nu\lambda}$ are given by the following:
		\begin{enumerate}
			\item If $\lambda$ is such that $d(\lambda)>2$, then $g_{\mu\nu\lambda}=0.$
			\item If $\lambda=(n)$, then $g_{\mu\nu\lambda}=1$ if and only if $\mu=\nu$, and if $\lambda=(1^n)$, then $g_{\mu\nu\lambda}=1$ if and only if $\mu=\nu'$.
			\item Let $\lambda=(n-r,1^r)$ be a hook shape where $1\leq r\leq n-2$. Then $$g_{\mu\nu\lambda}=((f-e\leq r\leq e+f+1)).$$
			\item Let $\lambda=(n_4,n_3,2^{d_2},1^{d_1})$ be a double hook where $n_4\geq n_3\geq 2$ and $x=2d_2+d_1$. Then
			\begin{eqnarray*}
				g_{\mu\nu\lambda}&=&((n_3-1\leq \frac{e+f-x}{2}\leq n_4))((f-e\leq d_1))\\ && + ((n_3\leq \frac{e+f-x+1}{2}\leq n_4))((f-e\leq d_1+1)).
			\end{eqnarray*} 
		\end{enumerate}
	\end{theorem}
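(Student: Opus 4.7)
The plan is to convert $s_\mu * s_\nu$ into an alternating sum of ordinary products of Schur functions via a Jacobi--Trudi-type expansion of the hook Schur function, combined with Littlewood's formula (Theorem~\ref{Kroneceker_coeff_LR_coeff}), and then to read off the coefficient of $s_\lambda$ using the Littlewood--Richardson rule. This is essentially the approach of Remmel and Rosas.

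First I would establish the identity
\[
s_{(n-e,1^e)} \;=\; \sum_{k=0}^{e-1}(-1)^k h_{n-e+k}\,e_{e-k} \;+\; (-1)^e h_n,
\]
which is an immediate telescoping consequence of the Pieri-type decomposition $h_a e_b = s_{(a,1^b)} + s_{(a+1,1^{b-1})}$. Substituting into $s_\mu * s_\nu$, applying Theorem~\ref{Kroneceker_coeff_LR_coeff} to each product $h_{n-e+k}e_{e-k}$, and using the basic identities $h_a * s_\alpha = s_\alpha$ (the trivial character is the Kronecker identity) and $e_b * s_\beta = s_{\beta'}$ (since $e_b = \epsilon\cdot h_b$), yields
\[
s_\mu * s_\nu \;=\; \sum_{k=0}^{e-1}(-1)^k \!\!\!\sum_{\substack{\alpha\vdash n-e+k\\\beta\vdash e-k}}\!\!\! c^{\nu}_{\alpha\beta}\, s_\alpha s_{\beta'} \;+\; (-1)^e s_\nu.
\]
Because $\nu=(n-f,1^f)$ is itself a hook, the LR rule forces each nonzero $c^{\nu}_{\alpha\beta}$ to have both $\alpha$ and $\beta$ hook-shaped, and in fact $c^{\nu}_{\alpha\beta}\in\{0,1\}$; the support is the explicit family of pairs arising from splitting the arm and leg of $\nu$ between $\alpha$ and $\beta$. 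Hence every surviving term is a product of two hook Schur functions $s_\alpha s_{\beta'}$, and a direct LR computation shows that such a product is a nonnegative combination of Schur functions indexed by partitions of Durfee rank at most $2$. This proves (1) at once, and reduces (2) to inspection of the rectangular contributions (using $\chi_{\mu'}=\epsilon\otimes\chi_\mu$).

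The remaining task, and the main technical obstacle, is the coefficient extraction for a fixed $\lambda$ of Durfee rank $\leq 2$. Parametrising the surviving pairs $(\alpha,\beta)$ explicitly in terms of $k$ and a splitting index, one computes $\langle s_\alpha s_{\beta'}, s_\lambda\rangle$ by the LR rule for products of two hooks and then tracks the alternating sum over $k$; most contributions cancel telescopically, reflecting the signed nature of the Jacobi--Trudi expansion. For hook $\lambda=(n-r,1^r)$ only extremal values of $k$ survive, producing the single indicator $((f-e\le r\le e+f+1))$ in (3). For a double hook $\lambda=(n_4,n_3,2^{d_2},1^{d_1})$, exactly two inequivalent splittings of the arm and leg of $\nu$ among $(\alpha,\beta)$ survive the cancellation; these correspond precisely to the two Pieri summands in $h_a e_b = s_{(a,1^b)}+s_{(a+1,1^{b-1})}$ and yield the two bracketed terms in (4). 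A useful consistency check throughout is the case $e=f=1$, which reproduces the standard-character Kronecker formula from Example~\ref{standard_character_kronecker}.
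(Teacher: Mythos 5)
First, note that the paper does not prove this theorem at all: it is quoted from Remmel \cite[Theorem 2.1]{re1} and Rosas \cite[Theorem 3]{ro}, so there is no in-paper argument to compare against. Your skeleton is the standard route from that literature, and the first half of it is sound: the telescoping identity $s_{(n-e,1^e)}=\sum_{k=0}^{e-1}(-1)^k h_{n-e+k}\,e_{e-k}+(-1)^e h_n$, the application of \Cref{Kroneceker_coeff_LR_coeff} with $h_a*s_\alpha=s_\alpha$ and $e_b*s_\beta=s_{\beta'}$, the observation that $c^{\nu}_{\alpha\beta}\in\{0,1\}$ with $\alpha,\beta$ hooks, and the Durfee bound $d(\lambda)\le d(\alpha)+d(\beta')\le 2$ (essentially \Cref{durfee_inequality_LR_coeff}) do give part (1). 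Part (2) needs none of this machinery; it is immediate from $\langle\chi_\mu\chi_\nu,\chi_{(n)}\rangle=\langle\chi_\mu,\chi_\nu\rangle$ and $\langle\chi_\mu\chi_\nu,\epsilon\rangle=\langle\chi_\mu,\chi_{\nu'}\rangle$.

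The gap is in (3) and (4), which carry the entire content of the theorem: you assert that ``most contributions cancel telescopically,'' that ``only extremal values of $k$ survive,'' and that ``exactly two inequivalent splittings survive,'' but none of this bookkeeping is performed, and it is precisely where all the work and all the risk of off-by-one errors live. Worse, the consistency check you yourself propose refutes the formula you claim to obtain. Take $e=f=1$ and $r=3$: by \Cref{standard_character_kronecker}, $\chi_{(n-1,1)}^2=\chi_{(n)}+\chi_{(n-1,1)}+\chi_{(n-2,2)}+\chi_{(n-2,1^2)}$, which contains no $\chi_{(n-3,1^3)}$, while $((f-e\le 3\le e+f+1))=1$; the same failure occurs for $e=1$, $f=2$, $r=4$. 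The correct upper bound in (3) is $e+f$, not $e+f+1$ --- which is also what the paper implicitly uses downstream, e.g.\ when it writes $c(\chi_{\mu(2)}^2)=\{\chi_\lambda\mid \lambda_1=n-4+i,\ \lambda_2\le 2+i,\ 0\le i\le 4\}$ in the proof of \Cref{Theorem_3}(1), thereby excluding $(n-5,1^5)$. So a correct execution of your own plan would not ``produce the single indicator $((f-e\le r\le e+f+1))$''; that your sketch lands exactly on the printed (erroneous) formula is strong evidence the coefficient extraction was never actually carried out. To complete the proof you must explicitly parametrize the pairs $(\alpha,\beta)$ with $c^{\nu}_{\alpha\beta}=1$, compute $\langle s_\alpha s_{\beta'},s_\lambda\rangle$ for hook and double-hook $\lambda$ by the LR rule, and track the alternating sum honestly; doing so will also correct the statement.
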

	
	\noindent The following result of Blasiak will be required. We give a proof as well since it is a direct application of \Cref{Kroneceker_coeff_LR_coeff}.
	
	\begin{lemma}[Proposition 3.1, \cite{bl}] \label{Kroncker_coeff_and_LR_for_hook}Let $\mu(d)$ denote the hook shape $(n-d,1^d)$ where $0\leq d\leq n-1$. Then $\displaystyle g_{\lambda\mu(d)\nu}+g_{\lambda\mu(d-1)\nu}=\sum_{\alpha\vdash d, \beta\vdash n-d}c^{\lambda}_{\alpha\beta}c_{\alpha'\beta}^{\nu}$.
	\end{lemma}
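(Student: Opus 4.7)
The plan is to observe that by the Pieri rule the product $h_{n-d} e_d$ has a very short Schur expansion: starting from the column shape $(1^d)$ and adding a horizontal strip of size $n-d$, the only partitions of $n$ that one can reach are $\mu(d) = (n-d,1^d)$ (adding the strip entirely in row $1$ except for one box placed at the bottom of column $1$) and $\mu(d-1) = (n-d+1,1^{d-1})$ (adding the entire strip in row $1$). Hence $h_{n-d} e_d = s_{\mu(d-1)} + s_{\mu(d)}$, and the left-hand side of the claim equals $\langle (h_{n-d}e_d)\ast s_\lambda,\, s_\nu\rangle$.

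Next, I would apply Littlewood's formula, namely the second part of \Cref{Kroneceker_coeff_LR_coeff}, to $(h_{n-d}e_d)\ast s_\lambda$ with $f_1 = h_{n-d}$ and $f_2 = e_d$. This expresses
\[
(h_{n-d}e_d)\ast s_\lambda \;=\; \sum_{\beta\vdash n-d}\sum_{\alpha\vdash d}\langle s_\beta s_\alpha,\, s_\lambda\rangle\,(h_{n-d}\ast s_\beta)(e_d\ast s_\alpha).
\]
The two inner Kronecker products simplify by general principles already recorded in the paper: $h_m$ is the Frobenius image of the trivial character, so $h_m \ast s_\beta = s_\beta$ for $\beta\vdash m$; and $e_m$ is the Frobenius image of the sign character, so pointwise multiplication by $\epsilon$ gives $e_m \ast s_\alpha = s_{\alpha'}$ for $\alpha\vdash m$. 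Substituting and identifying $\langle s_\alpha s_\beta, s_\lambda\rangle$ with the Littlewood--Richardson coefficient $c^\lambda_{\alpha\beta}$ yields
\[
(h_{n-d}e_d)\ast s_\lambda \;=\; \sum_{\alpha\vdash d,\,\beta\vdash n-d} c^\lambda_{\alpha\beta}\, s_\beta\, s_{\alpha'}.
\]

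Finally, I would take the Hall inner product with $s_\nu$, which converts $\langle s_\beta s_{\alpha'}, s_\nu\rangle$ into $c^\nu_{\alpha'\beta}$ (using $s_\beta s_{\alpha'} = s_{\alpha'} s_\beta$ and \Cref{LR_coeffcients}). Combined with the first paragraph this gives
\[
g_{\lambda\mu(d)\nu}+g_{\lambda\mu(d-1)\nu} \;=\; \langle (s_{\mu(d)}+s_{\mu(d-1)})\ast s_\lambda, s_\nu\rangle \;=\; \sum_{\alpha\vdash d,\,\beta\vdash n-d} c^\lambda_{\alpha\beta}\, c^\nu_{\alpha'\beta},
\]
which is the desired identity. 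There is no real obstacle in this proof strategy: the entire argument is essentially a bookkeeping exercise once one notices the fortunate factorisation $s_{\mu(d-1)}+s_{\mu(d)} = h_{n-d}e_d$, which packages the two hooks into a form to which Littlewood's formula applies cleanly.
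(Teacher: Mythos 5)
Your proof is correct and follows essentially the same route as the paper's: both exploit the Pieri-rule factorisation $s_{(n-d)}s_{(1^d)}=s_{\mu(d-1)}+s_{\mu(d)}$, apply \Cref{Kroneceker_coeff_LR_coeff} to $(h_{n-d}e_d)*s_{\lambda}$ using $h_{n-d}*s_{\beta}=s_{\beta}$ and $e_d*s_{\alpha}=s_{\alpha'}$, and finish by pairing with $s_{\nu}$.
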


	\begin{proof}
		By \Cref{Kroneceker_coeff_LR_coeff}, we have $\displaystyle s_{\lambda}*s_{(n-d)}s_{(1^d)}=\sum_{\alpha\vdash d, \beta\vdash n-d}c^{\lambda}_{\alpha\beta}s_{\alpha'}s_{\beta}$. On the other hand, using the Pieri rule, it is easy to see that $s_{(n-d)}s_{(1^d)}=s_{\mu(d)}+s_{\mu(d-1)}$. Thus, $$s_{\lambda}*s_{(n-d)}s_{(1^d)}=s_{\lambda}*s_{\mu(d)}+s_{\lambda}*s_{\mu(d-1)}=\sum_{\alpha\vdash d, \beta\vdash n-d}c^{\lambda}_{\alpha\beta}s_{\alpha'}s_{\beta}.$$
		Taking inner-product with $s_{\nu}$ on both sides yields the result.
	\end{proof}
	
	\noindent The following lemma  will be central to the proof of \Cref{Theorem_3}.
	\begin{lemma}\label{hook_containment_lemma}
		Let $1\leq s\leq \frac{n-1}{2}$. Then $c(\chi_{\mu(s-1)}\chi_{\mu(s)})\subseteq c(\chi_{\mu(s)}^2)$ and $c(\chi_{\mu(s-1)}^2)\subseteq c(\chi_{\mu(s)}^2)$. More generally, for any $k\geq 2$, $c(\chi_{\mu(s-1)}\chi_{\mu(s)}^{k-1})\subseteq c(\chi_{\mu(s)}^k)$ and $c(\chi_{\mu(s-1)}^k)\subseteq c(\chi_{\mu(s)}^k)$.
	\end{lemma}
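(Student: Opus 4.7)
The plan is to establish the two base cases at $k=2$ by a direct comparison of Kronecker coefficients using the explicit formulas of \Cref{Kronecker_coeff_two_hook_shapes}, and then lift to all $k\geq 2$ via \Cref{basic_lemma_1}. Since $\langle \chi_{\mu}\chi_{\nu},\chi_{\lambda}\rangle = g_{\mu\nu\lambda}$, the two base inclusions reduce respectively to the implications: for every $\lambda \vdash n$, $g_{\mu(s-1),\mu(s),\lambda} > 0 \Rightarrow g_{\mu(s),\mu(s),\lambda} > 0$, and $g_{\mu(s-1),\mu(s-1),\lambda} > 0 \Rightarrow g_{\mu(s),\mu(s),\lambda} > 0$. (The edge case $s=1$, where $\chi_{\mu(0)}$ is trivial, will be handled separately by a direct check.)

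I would split by the Durfee rank of $\lambda$. If $d(\lambda) > 2$, all three coefficients vanish by part (1) of the theorem. For $\lambda = (n)$, part (2) trivially implies both inclusions. For $\lambda = (1^n)$, part (2) combined with $s \leq (n-1)/2$ makes the left-hand sides vanish, since the equalities $\mu(s-1) = \mu(s)'$ and $\mu(s-1) = \mu(s-1)'$ respectively force $s = n/2$ and $s = (n+1)/2$. For $\lambda = \mu(r)$ a hook, part (3) reduces to the interval containments $[1, 2s] \subseteq [0, 2s+1]$ and $[1, 2s-1] \subseteq [0, 2s+1]$.

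The crux is when $\lambda = (n_4, n_3, 2^{d_2}, 1^{d_1})$ is a double hook with $x := 2d_2 + d_1$, so that each coefficient is a sum of two indicators (part (4)) testing whether quantities of the form $(2s - j - x)/2$ lie in intervals involving $n_3, n_4$. The plan is to express each firing region as an interval in a single parameter and compare the union of the LHS firing regions with that of the RHS. In every sub-case the only potentially dangerous situation is one where the LHS indicator fires but neither RHS indicator does: a short computation shows that this pins the relevant quantity to the right endpoint $n_4$ of its interval. Combining with the identity $|\lambda| = n = n_3 + n_4 + x$ then yields $n_4 - n_3 \leq 2s - 1 - n$ (or $2s - 2 - n$), and the hypothesis $s \leq (n-1)/2$ forces $n_4 - n_3 \leq -2$, contradicting $n_4 \geq n_3$. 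This rules out every dangerous case and finishes the base step.

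For general $k \geq 2$, I would induct. Calling the base cases (A) $c(\chi_{\mu(s-1)}\chi_{\mu(s)}) \subseteq c(\chi_{\mu(s)}^2)$ and (B) $c(\chi_{\mu(s-1)}^2) \subseteq c(\chi_{\mu(s)}^2)$, inclusion (A) combined with \Cref{basic_lemma_1} (taking $\chi_2 = \rho_2 = \chi_{\mu(s)}^{k-2}$) gives $c(\chi_{\mu(s-1)}\chi_{\mu(s)}^{k-1}) \subseteq c(\chi_{\mu(s)}^k)$ at once. For the second containment, (B) together with \Cref{basic_lemma_1} yields $c(\chi_{\mu(s-1)}^k) \subseteq c(\chi_{\mu(s-1)}^{k-2}\chi_{\mu(s)}^2)$, and then I trade $\mu(s-1)$'s for $\mu(s)$'s: iterated use of (A) (via \Cref{basic_lemma_1}) replaces a factor $\chi_{\mu(s-1)}\chi_{\mu(s)}$ inside the product by $\chi_{\mu(s)}^2$, reducing the exponent of $\mu(s-1)$ by one, and after $k-2$ such steps one lands in $c(\chi_{\mu(s)}^k)$. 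The main obstacle is the double-hook analysis, where tracking the parity of $x$ and the positions of the (possibly half-integer) indicator arguments relative to $n_3$ and $n_4$ is delicate; the hypothesis $s \leq (n-1)/2$ enters precisely at the step that rules out the edge cases.
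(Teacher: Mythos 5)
Your proposal is correct and follows essentially the same route as the paper: both reduce the $k=2$ base cases to a comparison of the Remmel--Rosas Kronecker coefficients from \Cref{Kronecker_coeff_two_hook_shapes}, split on the Durfee rank of $\lambda$, isolate the same dangerous endpoint in the double-hook case (where the indicator argument hits $n_4$) and kill it with $n=n_3+n_4+x$ together with $s\leq\frac{n-1}{2}$, and then lift to general $k$ via \Cref{basic_lemma_1}. The only cosmetic differences are your separate treatment of $s=1$ (the paper invokes \Cref{standard_character_kronecker}) and your more explicit iteration for the second containment at general $k$, which the paper leaves implicit.
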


	\begin{proof}
		Note that $\mu(1)=(n-1,1)$, whence the result follows from \Cref{standard_character_kronecker}. So we may assume $s\geq 2$. We show that $c(\chi_{\mu(s)}\chi_{\mu(s-1)})\subseteq c(\chi_{\mu(s)}^2)$. Using \Cref{Kronecker_coeff_two_hook_shapes}(1), we note that if the  Durfee rank of $\lambda$ is greater than 2, then $\chi_{\lambda}$ does not appear as a constituent of both $\chi_{\mu(s)}\chi_{\mu(s-1)}$ and $\chi_{\mu(s)}^2$.
		Further, by our choice of $s$, $\chi_{(1^n)}, \chi_{(n)} \notin c(\chi_{\mu(s-1)\mu(s)})$. Thus, we only consider $\lambda\vdash n$ such that $\lambda=\mu(r)$ where $1\leq r\leq n-2$ or $\lambda$ is a double hook shape.
		
		\medskip
		
		\textbf{Case I}: Suppose $\lambda=(n-r,1^r)$ where $1\leq r\leq n-2$. By \Cref{Kronecker_coeff_two_hook_shapes}(3),  $\chi_{\lambda}\in c(\chi_{\mu(s)}\chi_{\mu(s-1)})$ if and only if $1\leq r\leq 2s$.  The last inequality also implies $0\leq r\leq 2s+1$, whence \Cref{Kronecker_coeff_two_hook_shapes}(3) implies that $\chi_{\lambda}\in c(\chi_{\mu(s)}^2)$.
		
		\medskip
		
		\textbf{Case II}: Assume that $\lambda=(n_4,n_3,2^{d_2},1^{d_1})$, where $n_4\geq n_3\geq 2$ and $x=2d_2+d_1$. Assume that $\chi_{\lambda}\in c(\chi_{\mu(s)}\chi_{\mu(s-1)})$. By \Cref{Kronecker_coeff_two_hook_shapes}(4), the following expression is positive.
		\begin{equation}\label{eqn1}
				((n_3-1\leq \frac{2s-1-x}{2}\leq n_4))((d_1\geq 1))+ ((n_3\leq \frac{2s-x}{2}\leq n_4))((d_1\geq 0)).
		\end{equation}
		To show that $\chi_{\lambda}\in c(\chi_{\mu(s)}^2)$, once again using \Cref{Kronecker_coeff_two_hook_shapes}(4), we need to show that the following expression is positive.
		\begin{equation}\label{eqn2}
			((n_3-1\leq \frac{2s-x}{2}\leq n_4))((d_1\geq 0))+ ((n_3\leq \frac{2s-x+1}{2}\leq n_4))((d_1+1\geq 0)).
		\end{equation}
		Notice that if the second summand in expression \ref{eqn1} is positive, then the first summand in expression \ref{eqn2} is positive as well, and we are done. Therefore, we may assume that the second summand in expression \ref{eqn1} is zero. In that case, the first summand in expression \ref{eqn1} is positive. This automatically means that $d_1\geq 1$. We also have $n_3-\frac{1}{2}\leq \frac{2s-x}{2}\leq n_4+\frac{1}{2}$. But since the second summand  in expression \ref{eqn1} is zero, it follows that either $\frac{2s-x}{2}$ takes the value $n_3-\frac{1}{2}$ or $n_4+\frac{1}{2}$. In the former case, the first summand in expression \ref{eqn2} is positive, and hence we are done. On the other hand, the latter case is not possible. To see this, we note that $n=n_4+n_3+x=(n_4+\frac{x}{2})+(n_3+\frac{x}{2})$. Since $n_4\geq n_3$, we conclude that $n_3+\frac{x}{2}\leq \frac{n}{2}$. Also, $n_4=n-n_3-x$  implies that $\frac{2s-x}{2}-\frac{1}{2}=n-n_3-x$, whence 	$n+\frac{1}{2}=s+n_3+\frac{x}{2}\leq n$ as $s\leq \frac{n}{2}$. We get a contradiction, as desired.	
		
		\medskip
		
		Similar computations yield that $c(\chi_{\mu(s-1)}^2)\subseteq c(\chi_{\mu(s)}^2)$. The latter statements follow immediately by using  \Cref{basic_lemma_1}.
	\end{proof}
	
	We can now prove the first part of \Cref{Theorem_3}.
	
	\begin{proof}[\textbf{Proof of \Cref{Theorem_3}(1)}] 
		Since $\ccn(\chi_{\lambda};S_n)=\ccn(\chi_{\lambda'};S_n)$, it is enough to prove the result for $\chi_{\mu(2)}$. The result can be easily verified for $5\leq n\leq 9$ using SageMath (\cite{sam}). So we may assume $n\geq 10$.
		
		Using \Cref{Kronecker_coeff_two_hook_shapes}, we get that $c(\chi_{\mu(2)}^2)=\{\chi_{\lambda}\mid \lambda_1=n-4+i, \lambda_2\leq 2+i; 0\leq i\leq 4\}$. We claim the following: for any $2\leq k\leq \lfloor\frac{n}{3}\rfloor$, 
		\begin{equation}\label{claim_1_Theorem3_1}
				c(\chi_{\mu(2)}^k)\supseteq \{\chi_{\lambda}\mid \lambda_1=n-2k+i, \lambda_2\leq k+i;0\leq i \leq 2k\}.
		\end{equation}
		We make two observations at this point: (a) $2\leq k\leq \lfloor\frac{n}{3} \rfloor$ ensures that $n-2k+i\geq k+i$ for every $0\leq i\leq 2k$, (b) since $\chi_{\mu(2)}\in c(\chi_{\mu(2)}^2)$, we get that $c(\chi_{\mu(2)}^2)\subseteq c(\chi_{\mu(2)}^3)\subseteq c(\chi_{\mu(2)}^4)\subseteq \dots$. We now prove our claim by induction on $k$. The claim holds for $k=2$. Thus, we may assume $k\geq 3$. Since $c(\chi_{\mu(2)}^{k-1})\subseteq c(\chi_{\mu(2)}^k)$ by observation (b) made above, we conclude by induction hypothesis that 
		\begin{equation}\label{induction_hypothesis}
			\{\chi_{\lambda} \mid \lambda_1=n-2k+2+i,\lambda_2\leq k+i-1; 0\leq i\leq 2k-2\}\subseteq c(\chi_{\mu(2)}^k).
		\end{equation}
		Thus, it remains to show that $\chi_{\lambda}\in c(\chi_{\mu(2)}^k)$ whenever  $\lambda$ satisfies one of the following conditions: (1) $\lambda_1=n-2k+i, \lambda_2\leq k+i$ where $i=0,1$ and (2) $\lambda_1=n-2k+i$, $k+i-2\leq \lambda_2\leq k+i$ for every $2\leq i\leq 2k$. In the latter case, we may assume that $i\leq \frac{k+2}{2}$ since if $i>\frac{k+2}{2}$, then $\lambda_2\geq k+i-2$ is not possible as $\lambda_2\leq 2k-i$. Now we determine the conditions under which $\lambda$ is a two-row partition, that is, $\lambda_3=0$. In (1), clearly $\lambda_3>0$. In (2), if $i<\frac{k}{2}$, then $\lambda_3>0$. Thus, $\lambda_3>0$ unless (a) $i=\frac{k+1}{2}$ where $k$ is odd, whence $\lambda=(n-\frac{3k-1}{2},\frac{3k-1}{2})$ and (b)  $\frac{k}{2}\leq i\leq \frac{k}{2}+1$ where $k$ is even, whence $\lambda=(n-\frac{3k}{2},\frac{3k}{2})$, or $\lambda=(n-\frac{3k}{2}+1,\frac{3k}{2}-1)$.
		
		\medskip

		\textbf{Case I}: Let $\lambda=(\lambda_1,\lambda_2,\ldots)$ be as in (1) or (2) with  $\lambda_3>0$. Let $\tilde{\lambda}=(\lambda_2,\lambda_3,\ldots)$ so that $|\tilde{\lambda}|=2k-i$. Since $\lambda_3>0$, there exists $\tilde{\beta}\vdash 2k-2-i$ such that $\tilde{\lambda}/\tilde{\beta}$ is one of the two shapes \begin{ytableau}
			\none &   \\
			& \none
		\end{ytableau}\;\;\; or, \;
		\begin{ytableau}
			\\
			\\
		\end{ytableau}. We make the choice of $\tilde{\beta}$ arbitrary with the above property provided $\lambda_2\leq k+i-1$. If $\lambda_2=k+i$, notice that $\lambda_3\leq k+i$ and $\lambda_3=k+i$ only when $i=0$, in which case $\lambda=(n-2k,k,k)$. Either way, we choose $\tilde{\beta}$ by removing one box from the first row of $\tilde{\lambda}$ and another box from a different admissible row of $\tilde{\lambda}$. Thus, we obtain that $\tilde{\beta}_1\leq k-1+i$ in all cases. We define $\beta=(\beta_1,\beta_2,\ldots )\vdash n-2$ as follows:
		\[\beta_r=
		\begin{cases*}
			\lambda_1 & \text{if $r=1$}\\
			\tilde{\beta}_{r-1} & \text{if $r\geq 2$}
		\end{cases*}.
		\]
	 	Let $\alpha=(1,1)$. Clearly $c^{\lambda}_{\alpha\beta}=1$. Now we define a partition $\nu=(\nu_1,\nu_2,\ldots)\vdash n$ as follows:
		\[\nu_r=
		\begin{cases*}
			n-2k+2+i & \text{if $r=1$}\\
			\beta_r & \text{if $r\geq 2$}
		\end{cases*}.
		\]
		Clearly, $\nu_2\leq k-1+i$ since $\beta_2=\tilde{\beta}_1\leq k-1+i$. By \Cref{induction_hypothesis}, $\chi_{\nu}\in c(\chi_{\mu(2)}^{k-1})$. Further, $\nu/\beta$ is a horizontal strip with two boxes, which implies that  $c^{\nu}_{\alpha'\beta}=1$. Thus, we can conclude that $\displaystyle \sum_{\gamma\vdash 2,\delta\vdash n-2} c^{\lambda}_{\gamma\delta}c^{\nu}_{\gamma'\delta}$ is positive, whence \Cref{Kroncker_coeff_and_LR_for_hook} implies $g_{\lambda\mu(2)\nu}+g_{\lambda\mu(1)\nu}>0$. If $g_{\lambda\mu(2)\nu}>0$, then $\chi_{\lambda}\in c(\chi_{\nu}\chi_{\mu(2)})\subseteq c(\chi_{\mu(2)}^k)$ and we are done. If $g_{\lambda\mu(1)\nu}>0$, then $\chi_{\lambda}\in c(\chi_{\nu}\chi_{\mu(1)})\subseteq c(\chi_{\mu(2)}^{k-1}\chi_{\mu(1)})$. By \Cref{hook_containment_lemma}, $c(\chi_{\mu(1)}\chi_{\mu(2)}^{k-1})\subseteq c(\chi_{\mu(2)}^k)$, whence we conclude that $\chi_{\lambda}\in c(\chi_{\mu(2)}^k)$ and we are done.

		By the assumption in \textbf{Case I} and the discussion prior to it, we are left with the following case:
		
		\medskip
		
		\noindent \textbf{Case II}: (a) $\lambda=(n-\frac{3k-1}{2}, \frac{3k-1}{2})$ when $k$ is odd and (b) $\lambda=(n-\frac{3k}{2},\frac{3k}{2})$, or $\lambda=(n-\frac{3k}{2}+1,\frac{3k}{2}-1)$ when $k$ is even. First, we consider (a).
		Since $k\geq 3$, $\lambda_2\geq 4$. Let $\alpha=(2)\vdash 2$. Let $\beta=(n-\frac{3k-1}{2}, \frac{3k-1}{2}-2)$. Clearly $c^{\lambda}_{\alpha\beta}=1$. Let $\nu=(n-\frac{3k-1}{2}+1, \frac{3k-1}{2}-2,1)$. It is immediate that $c^{\nu}_{\alpha'\beta}=1$ as $\nu/\beta$ is a vertical strip with two boxes. Thus, we obtain $\displaystyle \sum_{\gamma\vdash 2,\delta\vdash n-2} c^{\lambda}_{\gamma\delta}c^{\nu}_{\gamma'\delta}>0$ . Notice that $\chi_{\nu}\in c(\chi_{\mu(2)}^{k-1})$.  By \Cref{Kroncker_coeff_and_LR_for_hook}, we get $g_{\lambda\mu(2)\nu}+g_{\lambda\mu(1)\nu}>0$. Suppose that $g_{\lambda\mu(2)\nu}>0$. This implies $\chi_{\lambda}\in c(\chi_{\nu}\chi_{\mu(2)})\subseteq c(\chi_{\mu(2)}^k)$ and we are done. If $g_{\lambda\mu(1)\nu}>0$, then $\chi_{\lambda}\in c(\chi_{\nu}\chi_{\mu(1)})\subseteq c(\chi_{\mu(2)}^{k-1}\chi_{\mu(1)})$. By \Cref{hook_containment_lemma}, $c(\chi_{\mu(1)}\chi_{\mu(2)}^{k-1})\subseteq c(\chi_{\mu(2)}^k)$, whence we conclude that $\chi_{\lambda}\in c(\chi_{\mu(2)}^k)$ and we are done. The arguments for the cases in (b) follow along similar lines. We simply mention the choices of $\alpha,\beta$, and $\nu$ in these cases. If $\lambda=(n-\frac{3k}{2}, \frac{3k}{2})$, then $\lambda_2\geq 5$ as $k\geq 3$. We choose $\alpha=(2)\vdash 2$, $\beta=(n-\frac{3k}{2},\frac{3k}{2}-2)\vdash n-2$, and $\nu=(n-\frac{3k}{2}+1,\frac{3k}{2}-2,1)\vdash n$. If $\lambda=(n-\frac{3k}{2}+1, \frac{3k}{2}-1)$, then $\lambda_2\geq 4$ as $k\geq 3$. We choose $\alpha=(2)\vdash 2$, $\beta=(n-\frac{3k}{2}+1,\frac{3k}{2}-3)\vdash n-2$, and $\nu=(n-\frac{3k}{2}+2,\frac{3k}{2}-3,1)\vdash n$.
		
		\medskip
		
		By \Cref{hook_constituents} we get,
		
		\begin{equation}\label{auxillary_eqn}
			c(\sigma_{\mu(2)}^k)=\{\chi_{\lambda}\mid \lambda_1\geq n-2k\}=\{\chi_{\lambda}\mid \lambda_1=n-2k+i; 0\leq i\leq 2k\}.
		\end{equation}
		Now we compare the sets in the RHS of \Cref{claim_1_Theorem3_1} and \Cref{auxillary_eqn} by taking $k=\lfloor\frac{n}{3}\rfloor$. It is easy to check that when $n\equiv 0(\text{mod}\; 3)$, that is, $n=3k$, then both sets are equal, whence we conclude that $c(\chi_{\mu(2)}^k)=c(\sigma_{\mu(2)}^k)$. Assume now that $n=3k+1$. In this case, it is easy to see that $c(\sigma_{\mu(2)}^k)\setminus c(\chi_{\mu(2)}^k)=\{\chi_{\lambda} \mid \lambda_1=\lambda_2=n-2k+i; 0\leq i\leq \frac{k-1}{2}\}.$ We claim that $c(\sigma_{\mu(2)}^{k+1})=c(\chi_{\mu(2)}^{k+1})$. Since $c(\sigma_{\mu(2)}^{k+1})\supseteq c(\chi_{\mu(2)}^{k+1})$, using \Cref{hook_constituents} and the fact that $c(\chi_{\mu(2)}^{k})\subseteq c(\chi_{\mu(2)}^{k+1})$, we need to show that $\chi_{\lambda}\in c(\chi_{\mu(2)}^{k+1})$, where  $\lambda$ satisfies one of the following conditions: (1) $\lambda_1=n-2k-i$ where $i=1,2$ and (2) $\lambda_1=\lambda_2=n-2k+i$ for every $0\leq i\leq \frac{k-1}{2}$. To show this, we repeat the same process as done previously in \textbf{Case I} and \textbf{Case II}. More specifically, we note that $\lambda_3>0$ in both (1) and (2) except when $k$ is odd and $\lambda=(n-2k+i,n-2k+i)$ with $i=\frac{k-1}{2}$. Thus, when $\lambda_3>0$, we use the argument in \textbf{Case I} above, and for the latter case, the argument in \textbf{Case II} is used. When $n=3k+2$, it can be proved similarly that $c(\chi_{\mu(2)}^{k+1})=c(\sigma_{\mu(2)}^{k+1})$. Overall, we conclude that $c(\sigma_{\mu(2)}^{\lceil \frac{n}{3} \rceil})=c(\chi_{\mu(2)}^{\lceil \frac{n}{3} \rceil})$. The proof of the theorem now follows from \Cref{basic_lemma_3} and \Cref{covering_number_sigma_hook}.
	\end{proof}
	
	\begin{remark}
		In the above proof, one can prove that \Cref{claim_1_Theorem3_1} can be strengthened to an equality.
	\end{remark}
		
	We now move on to the proof of the second statement of \Cref{Theorem_3}. It is enough to consider $\lambda=(\frac{n+1}{2},1^{\frac{n-1}{2}})$ when $n$ is odd, and $\lambda=(\frac{n}{2}+1, 1^{\frac{n}{2}-1})$ when $n$ is even. In other words, $\lambda=\mu(k)$, where $k=\lfloor \frac{n-1}{2} \rfloor$. We will prove our theorem via a sequence of lemmas.
	
	\begin{lemma}\label{durfee_lemma_1}
		Let  $k=\lfloor \frac{n-1}{2} \rfloor$ and $\lambda\vdash n$ be such that $d(\lambda)=2m$, where $m\geq 2$. Then there exists $\alpha\vdash k$ and $\beta \vdash n-k$ such that $d(\alpha)=d(\beta)=m$ and  $c^{\lambda}_{\alpha\beta}>0$ (i.e., $\langle s_{\alpha}s_{\beta},s_{\lambda} \rangle>0)$.
	\end{lemma}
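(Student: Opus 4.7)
The plan is to construct explicit $\alpha\vdash k$ and $\beta\vdash n-k$ with $d(\alpha)=d(\beta)=m$, and to exhibit a Littlewood--Richardson tableau of shape $\lambda/\alpha$ and content $\beta$ in order to conclude $c^{\lambda}_{\alpha\beta}>0$. The main tool is the observation that for \emph{any} $\alpha\subseteq\lambda$, the \emph{canonical filling} of the skew shape $\lambda/\alpha$ --- obtained by placing the entry $i$ in the $i$-th topmost box of each column of $\lambda/\alpha$ --- is always an LR tableau. Semi-standardness is immediate, and in its reverse reading word (rows top-to-bottom, each row right-to-left) the $i$-th box of any column is read strictly before the $(i+1)$-st box of the same column (since the former lies in a strictly earlier row), so the lattice inequality $\mathrm{count}(i)\ge\mathrm{count}(i+1)$ is preserved at every partial prefix. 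The content of this canonical tableau is the partition $\beta$ given by $\beta_j=\#\{\text{columns of }\lambda/\alpha\text{ of height}\ge j\}$.

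Accordingly, I would choose $\alpha\subseteq\lambda$ with $|\alpha|=k$ and $d(\alpha)=m$ so that the column-height sequence $c_j:=\lambda'_j-\alpha'_j$ of $\lambda/\alpha$ satisfies (i) at least $m$ columns have $c_j\ge m$, and (ii) at most $m$ columns have $c_j>m$; these together force $\beta_m\ge m$ and $\beta_{m+1}\le m$, hence $d(\beta)=m$. Condition (i) is automatic: $d(\alpha)=m$ gives $\alpha'_j\le m$ for $j>m$ and $d(\lambda)=2m$ gives $\lambda'_j\ge 2m$ for $j\le 2m$, so $c_j\ge m$ for all $m<j\le 2m$ --- that is already $m$ columns. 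For (ii), I impose $\alpha'_j\ge\lambda'_j-m$ whenever $j\le m$, which forces $c_j\le m$ on the left; then the only columns that can have $c_j>m$ lie in $(m,2m]$, of which there are exactly $m$.

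The substantive technical step is to verify that such $\alpha$ of size exactly $k$ is realizable, which amounts to the inequality $D\le n-k$ where $D=\sum_{i>m}(\lambda_i-m)^{+}$ counts the boxes of $\lambda$ in rows $>m$ and columns $>m$. I would prove the sharper bound $D\le B+C$ (with $B$, $C$ the number of boxes of $\lambda$ in rows $\le m$ columns $>m$, respectively rows $>m$ columns $\le m$) by an explicit injection: for $m<i\le 2m$, match the $(\lambda_i-m)$ $D$-boxes of row $i$ with the $(\lambda_{i-m}-m)\ge(\lambda_i-m)$ $B$-boxes of row $i-m$; for $i>2m$, note $\lambda_i\le 2m$, so row $i$'s $D$-boxes (at most $m$) are absorbed by its own $C$-boxes ($=\min(\lambda_i,m)$). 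Together with $A+B+C+D=n$ and $A=m^2$ this gives $2D\le n-m^2$, hence $D\le(n-m^2)/2<n-k$. The main obstacle I anticipate is the careful bookkeeping when the naive baseline construction has minimum size exceeding $k$ (which happens when $\lambda'_{2m+1}$ is sizeable): in that case one must selectively relax the constraint $\alpha'_j\ge\lambda'_j-m$ on a few columns $j>2m$, using up part of the $m$-column budget for $c_j>m$, in order to bring $|\alpha|$ down to exactly $k$ while preserving the partition condition on $\alpha$ and $d(\alpha)=m$.
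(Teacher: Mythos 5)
Your reduction to the canonical column filling is where the argument breaks. The observation that the column-superstandard filling of any $\lambda/\alpha$ is an LR tableau is correct, and condition (i) is verified correctly; but committing to that filling forces the content $\beta$ to be exactly the column-height profile of $\lambda/\alpha$, and an $\alpha\vdash k$ with $d(\alpha)=m$ making that profile have Durfee rank $m$ need not exist. Concretely, take $\lambda=(4,4,4,4,4,2,2,2,2)\vdash 28$, so $\lambda'=(9,9,5,5)$, $d(\lambda)=4$, $m=2$, $k=13$. Any $\alpha\subseteq\lambda$ with $d(\alpha)=2$ has $\alpha'_3,\alpha'_4\le 2$, so columns $3$ and $4$ of $\lambda/\alpha$ have heights $c_3,c_4\ge 3$; these two columns already exhaust your budget of $m=2$ columns of height $>m$, so $d(\beta)=2$ forces $c_1,c_2\le 2$, i.e.\ $\alpha'_1,\alpha'_2\ge 7$, whence $|\alpha|\ge 14>13$. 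So no admissible $\alpha$ exists. This also shows that your inequality $D\le n-k$ (here $D=6\le 15$) is necessary but far from sufficient: the real obstruction is $\sum_{j\le m}(\lambda'_j-m)$ exceeding $k$ when the middle columns $m<j\le 2m$ have $\lambda'_j>2m$ and are therefore unsavable, and this obstruction is not located at columns $j>2m$ as your final paragraph anticipates. Swapping the roles (inner shape of size $n-k$) does not rescue the method either: for $\lambda=(4^5,2^6)\vdash 32$ the same computation forces the inner shape to have size at least $18>17=n-k>15=k$.

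The paper's proof shares your first step (carefully choosing an inner shape of the right size and Durfee rank, with constraints of the flavour $\alpha'_j\ge\lambda'_j-m$ on long columns and a box-counting argument for realizability), but it crucially does \emph{not} use the canonical filling: only the first $2m$ rows of the skew shape are filled column-superstandardly, and the boxes in rows below $2m$ receive large, essentially row-indexed entries (with delicate adjustments to keep the reverse reading word a lattice permutation and to keep each entry $>m$ used at most $m$ times). This decouples the content of the tableau from the column heights of the skew shape, which is exactly what the example above shows is necessary. To repair your argument you would need to replace the canonical filling by such a flexible filling; the choice of $\alpha$ alone cannot carry the proof.
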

	
	\begin{proof}
		At first, we define a partition $\beta\subseteq \lambda$ by constructing its Young diagram $T_{\beta}$ from that of $T_{\lambda}$ as follows:
		\begin{itemize}
			\item We choose a box from the first $m$ rows of $T_{\lambda}$ if there are at least $m$ boxes below it in its column.
			
			\vspace{2 mm}
			
			\item We say that a box in $T_{\lambda}$ satisfy \textbf{Property P} if there are at least $m$ boxes right to it (in its row). For each $1\leq j\leq m$, we choose those boxes in the $j$-th column that satisfy \textbf{Property P} but aren't  among the last $m$ boxes in the same column that satisfy \textbf{Property P}.
			
			\vspace{2 mm}
			
			\item At this point, we observe that the number of boxes chosen is clearly less than or equal to $n-k$, since there is an obvious injection from the set of chosen boxes to the set of non-chosen ones.
			
			\vspace{2 mm}
			
			\item Now if there are more boxes to choose for $T_{\beta}$, we do it by choosing boxes from the first column (column-wise), and then from the second column, and so on till the $m$-th column.
			
			\vspace{2 mm}
			
			\item If even more boxes are required, we choose boxes from the first row (row-wise), then from the second row, and so on until the $m$-th row.
			
			\vspace{2 mm} 
			
			\item In the above steps, we stop at the point where we have chosen $n-k$ many boxes. Since $d(\lambda)=2m$, the total number of boxes that are neither in the first $m$ rows nor in the first $m$ columns is clearly less than or equal to $k$, and hence, by performing the steps mentioned above, we certainly get $n-k$ many boxes at some stage.
			
			\vspace{2 mm}
		
			\item Finally, it is also clear that $d(\beta)=m$.
		\end{itemize}
		\noindent We give an example to illustrate the choice of $\beta$ (see \Cref{Fig_6}). Let $\lambda=(11,10^2,8,7,6^2,4^2,2^3,1)\vdash 73$. We have $d(\lambda)=6$, $m=3$, and  $k=36$. Thus, $\beta\vdash 37$.  The red colored boxes in the first diagram of the given illustration are the ones chosen in steps 1 and 2. The second diagram is obtained from the first by performing the steps after step 2 (indicated by yellow colored boxes) and finally gives $\beta=(8,7,6,3,2^4,1^5)\vdash 37$.
	
		\begin{figure}[!h]
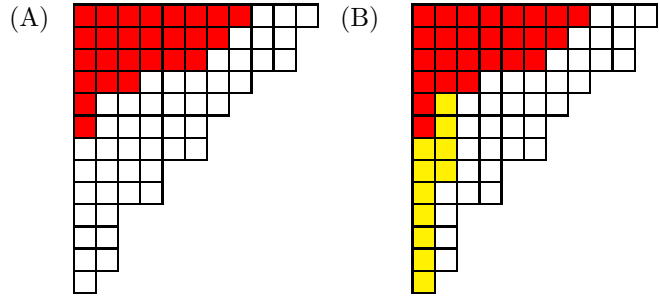

			(A)\;\;\ytableausetup{smalltableaux}
				\begin{ytableau}
				*(red) & *(red) & *(red) & *(red) & *(red) & *(red) & *(red) & *(red) & & &\\
				*(red) & *(red) & *(red) & *(red) & *(red) & *(red) & *(red) & & &\\
				*(red) & *(red) & *(red) & *(red) & *(red) & *(red) & & & &\\
				*(red) &  *(red) & *(red) & & & & &\\
				*(red) & & & & & &\\
				*(red) & & & & &\\
				& & & & &\\
				& & &\\
				& & &\\
				& \\
				&\\
				&\\
				\\
			\end{ytableau}\;\;\;(B)\;\;
			\ytableausetup{smalltableaux}
			\begin{ytableau}
				*(red) & *(red) & *(red) & *(red) & *(red) & *(red) & *(red) & *(red) & & &\\
				*(red) & *(red) & *(red) & *(red) & *(red) & *(red) & *(red) & & &\\
				*(red) & *(red) & *(red) & *(red) & *(red) & *(red) & & & &\\
				*(red) &  *(red) & *(red) & & & & &\\
				*(red) & *(yellow) & & & & &\\
				*(red) & *(yellow) & & & &\\
				*(yellow) & *(yellow) & & & &\\
				*(yellow)& *(yellow) & &\\
				*(yellow)& & &\\
				*(yellow)& \\
				*(yellow)&\\
				*(yellow)&\\
				*(yellow)\\
			\end{ytableau}
			\caption{Construction of $\beta\subseteq \lambda$.}
			\label{Fig_6}
		\end{figure}

		\medskip
	
		Now it is required to fill the skew shape $T_{\lambda/\beta}$ in a way that its type $\alpha$ has Durfee rank equal to $m$, and it is a LR tableaux. For that, we make some observations on the skew diagram $T_{\lambda/\beta}$. It is easy to see that $T_{\lambda/\beta}$ has no $(m+1)\times (m+1)$ square contained in it. We use the convention that the rows of $T_{\lambda/\beta}$ are labeled with respect to those of $T_{\lambda}$. With this convention, let $p$ be the least positive integer such that the number of boxes (say $r_0$) in the $(2m+p)$-th row of $T_{\lambda/\beta}$ is less than $m$, that is, $r_0<m$. Note that $p$ always exists with the convention that there are zeroes after the last part of the skew partition $\lambda/\beta$. Let $\widetilde{T_{\lambda/\beta}}$ be the sub-diagram of $T_{\lambda/\beta}$ starting from the $(2m+p)$-th row to the last row (say $(2m+p+d)$-th row). We write $\widetilde{T_{\lambda/\beta}}=(r_0,r_1,\ldots,r_d)$, where $r_i$ denotes the number of boxes in the $(2m+p+i)$-th row.  Now we make an important observation on $\widetilde{T_{\lambda/\beta}}$. There exists a positive integer $c$ (possibly) such that $r_0\geq \cdots \geq r_{c-1}$, $r_c=r_{c-1}+1$, and $r_c\geq \cdots \geq r_d$. We remark that such a \emph{`c'} may not exist, in which case $r_0\geq \cdots \geq r_d$. To see this, consider the box (if there is any) of $T_{\lambda}$ that lies to the immediate left of the first box of the $(2m+p)$-th row of  $T_{\lambda/\beta}$. If there is no such box, then it is clear that such a \emph{`c'} won't exist. But if such a box exists, then (a) it is a part of the diagram $T_{\beta}$ and hence it lies in some $k$-th column of $T_{\lambda}$ where $1\leq k\leq m$, (b) in the $k$-th column, our concerned box lies just below the bottom most box of that column having \textbf{Property P}. By our choice of $\beta$, all the boxes from the first column to the $(k-1)$-th column are in the diagram of $T_{\beta}$, whence our observation follows.
		
		\medskip
		
		\noindent With these observations, we are now in a position to fill $T_{\lambda/\beta}$. We divide this filling into three parts.
		
		\begin{itemize}
			\item For the first $2m$ rows of $T_{\lambda/\beta}$, we fill each column with numbers $1$ to $j$ in increasing order, where $j$ is the number of boxes in that column. Clearly, $j\leq m$ by our choice of $\beta$. Since $d(\lambda)=2m$, it is also clear that each of the numbers from 1 to $m$ has been used at least $m$ times. Observe also that the filling until the $2m$-th row is semi-standard and the reverse reading word  is a lattice permutation.
			
			\vspace{2 mm}
			
			\item  Now we fill  the sub-diagram  of $T_{\lambda/\beta}$ from $(2m+1)$-th to $(2m+p-1)$-th row (at this step it is assumed $p\geq 2$). Let this sub-diagram be written as $(s_1,\ldots,s_{p-1})$, where $s_i$ is the number of boxes in the $(2m+i)$-th row. Then $m\leq s_i \leq 2m$ (by the choice of $p$). We fill the last $m$ boxes of the $(2m+i)$-th row by the number $m+i$, and then we fill the row from left to right with the least possible numbers in a semi-standard way. Note that these \emph{``least possible numbers''} will always be less than or equal to $m$. If not, then it implies that the diagram would contain a $(m+1)\times (m+1)$ square, a contradiction. Thus, we have a filling until the $(2m+p-1)$-th row, which still has the desired properties of a LR tableaux. Further, any number greater than $m$ that has been used occurs exactly $m$ many times.
			
			\vspace{2 mm}
			
			\item Finally, we fill last remaining sub-diagram $\widetilde{T_{\lambda/\beta}}$, which is from $(2m+p)$-th row to $(2m+p+d)$-th row. Recall the observation we made about this sub-diagram before. If $r_0\geq \cdots \geq r_d$, we fill all the boxes of the $(2m+p+i)$-th row with $m+p+i$, where $0\leq i\leq d$. If not, then $r_0\geq \cdots \geq r_{c-1}$, $r_c=r_{c-1}+1$, and $r_c\geq\cdots\geq r_d$. Let $e\geq 2$ be the least positive integer such that $r_{c+e}\leq r_0$. If such $e$ does not exist, we take $e=d+1$. Then $r_{c-1}+1=r_c=\cdots=r_{c+e-1}$. We fill all boxes of the $(2m+p+i)$-th row with $m+p+i$, where $1\leq i\leq c-1$. For $c\leq i\leq e-1$, we fill the first box of the $(2m+p+i)$-th row with $m+p+i-c$ and the remaining boxes with $m+p+i$. For $e\leq i\leq d$, all boxes of the $(2m+p+i)$-th row are filled with $m+p+i$ once again.
		\end{itemize}
		
		\medskip
		
		\noindent Overall, we get a LR tableaux whose type has Durfee rank $m$. We end our proof by filling the tableaux in our example according to the method described above (see \Cref{Fig_7} below). In this case, $\alpha=(9,8^2,3^2,2,1^3)$.
		
		\begin{figure}[!h]
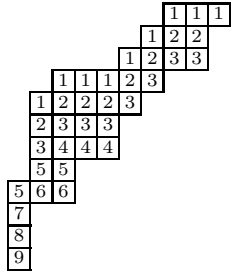

			\ytableausetup{centertableaux}
			\begin{ytableau}
				\none & \none & \none & \none & \none & \none & \none & \none & 1 & 1 & 1\\
				\none & \none & \none & \none & \none & \none & \none & 1 & 2 & 2\\
				\none & \none & \none & \none & \none & \none & 1 & 2 & 3 & 3\\
				\none & \none  & \none & 1 & 1 & 1 & 2 & 3\\
				\none & \none & 1 & 2 & 2 & 2 & 3\\
				\none & \none & 2 & 3 & 3 & 3\\
				\none & \none & 3 & 4 & 4 & 4\\
				\none & \none & 5 & 5\\
				\none & 5 & 6 & 6\\
				\none& 7 \\
				\none& 8 \\
				\none& 9 
			\end{ytableau}
			\hspace{-2 cm}\caption{LR tableaux of shape $(11,10^2,8,7,6^2,4^2,2^3,1)/(8,7,6,3,2^4,1^5)$ and type $(9,8^2,3^2,2,1^3)$.}
			\label{Fig_7}
		\end{figure}
	\end{proof}

	We prove an analogue of \Cref{durfee_lemma_1} when $d(\lambda)$ is odd. The proof is similar, but there are subtle changes, so we write it in detail.

	\begin{lemma}\label{durfee_lemma_2}
		Let  $k=\lfloor \frac{n-1}{2} \rfloor$ and $\lambda\vdash n$ be such that $d(\lambda)=2m-1$, where $m\geq 2$. Then there exists $\alpha\vdash k$ and $\beta \vdash n-k$ such that $m-1\leq d(\alpha)\leq m$, $d(\beta)=m$, and  $c^{\lambda}_{\alpha\beta}>0$ (i.e., $\langle s_{\alpha}s_{\beta},s_{\lambda} \rangle>0)$.
	\end{lemma}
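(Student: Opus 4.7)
The plan is to mirror the proof of \Cref{durfee_lemma_1}, adapting both the construction of $\beta$ and the subsequent LR-filling to account for the fact that the Durfee square of $\lambda$ now has odd side length $2m-1$. Since $d(\lambda)=2m-1\leq 2m$, we shall see that the same algorithm still produces a partition $\beta\subseteq\lambda$ of size $n-k$ with $d(\beta)=m$, while the corresponding LR-filling now creates a type $\alpha$ whose Durfee rank lies in $\{m-1,m\}$ instead of being pinned at $m$.

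First I would construct $\beta\subseteq \lambda$ by running the very same four-step algorithm from \Cref{durfee_lemma_1}: mark in each of the first $m$ rows every box whose column extends at least $m$ more rows below, then mark in each of the first $m$ columns the boxes satisfying Property~P except for the bottom $m$ such boxes, then if more boxes are needed top up column-wise through columns $1,\ldots,m$ and finally row-wise through rows $1,\ldots,m$. To see this algorithm terminates with $|\beta|=n-k$, I would verify that the number of boxes of $T_\lambda$ lying strictly southeast of the $m$-th row and $m$-th column is at most $k$; this follows from $d(\lambda)=2m-1$ since columns $m+1,\ldots,2m-1$ and rows $m+1,\ldots,2m-1$ each contain at most $m-1$ such boxes, and columns (resp.\ rows) past index $2m-1$ are short by the Durfee rank bound. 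By construction $d(\beta)=m$.

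Next I would fill $T_{\lambda/\beta}$ using the same three-stage recipe: fill each column of the first $2m-1$ rows of the skew shape with $1,2,\ldots,j$ from top to bottom (where $j\leq m$ is the column height in the skew shape); then for each row between $(2m)$ and $(2m+p-1)$ place the entries $m+i$ in the last $m$ boxes and pad the left with the smallest admissible values; finally handle the bottom sub-diagram $\widetilde{T_{\lambda/\beta}}$ exactly as before, either by filling each row uniformly or by performing the single-box shift whenever the row-length sequence jumps up by one. The verification that the result is semistandard and that the reverse reading word is a lattice permutation is identical to the even case, so the type $\alpha$ of this LR tableau satisfies $c^{\lambda}_{\alpha\beta}>0$.

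The main obstacle, and the only point where the parity intervenes, is pinning down $d(\alpha)$. The upper bound $d(\alpha)\leq m$ is immediate because each value strictly greater than $m$ is used at most $m$ times by the stage~2/stage~3 rules. For the lower bound $d(\alpha)\geq m-1$, I would argue by counting how often the values $1,\ldots,m-1$ appear in stage~1: columns $m+1,\ldots,2m-1$ of $\lambda$ each have length $\geq 2m-1$, and $\beta$ uses at most $m$ entries from each, so in the first $2m-1$ rows of $T_{\lambda/\beta}$ each of these $m-1$ columns contributes at least $m-1$ boxes. Hence every value $j\in\{1,\ldots,m-1\}$ occurs at least $m-1$ times, which yields $\alpha_{m-1}\geq m-1$ and thus $d(\alpha)\geq m-1$. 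Depending on whether an additional column of $T_{\lambda/\beta}$ happens to contribute an $m$-th copy of the value $m$ in stage~1, one obtains either $d(\alpha)=m-1$ or $d(\alpha)=m$, giving the claimed range.
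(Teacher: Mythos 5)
Your plan is to reuse the algorithm of \Cref{durfee_lemma_1} verbatim, both for the construction of $\beta$ and for the three-stage filling, and the two places where this breaks are exactly the two places where the paper's proof of \Cref{durfee_lemma_2} departs from the even case. The first gap is in the construction of $\beta$: when $d(\lambda)=2m-1$, the unmodified rule ``choose a box in the first $m$ rows if its column has at least $m$ boxes below it'' no longer forces the $m\times m$ square into $\beta$, because a column of length exactly $2m-1$ contributes only its first $m-1$ boxes under this rule. Concretely, take $m=2$ and $\lambda=(3,3,3)\vdash 9$, so $k=4$ and $|\beta|=5$: your stages one and two mark only the three boxes of the first row, and the column-wise top-up then yields $\beta=(3,1,1)$ with $d(\beta)=1\neq m$. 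The paper's proof avoids this by genuinely changing the algorithm: it first places the entire $m\times m$ square into $\beta$, restricts the ``at least $m$ below'' rule to the first $m-1$ rows, excludes only the last $m-1$ (not $m$) Property P boxes in each of the first $m-1$ columns, and reorders the top-up so that columns and rows $1,\dots,m-1$ are used before column $m$ and row $m$.

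The second gap is in the filling. With $d(\lambda)=2m-1$ one can only guarantee an $m\times(m-1)$ rectangle (not $m\times m$) inside the first $2m-1$ rows of $T_{\lambda/\beta}$ --- the paper establishes even this weaker fact by a nontrivial counting argument based on $\sum_i(2\beta_i-\lambda_i)\leq 2$ --- so after stage one the value $m$ may have occurred only $m-1$ times. If you then write $m+1$ in the last $m$ boxes of row $2m$, the prefix of the reverse reading word ending at those boxes contains $m$ copies of $m+1$ but only $m-1$ copies of $m$, violating the lattice condition. This is precisely why the paper splits into Case I and Case II and, in Case II, places only $m-1$ copies of $m+i$ per middle-band row, pads with a restricted ``least number not yet used $m$ times'' rule, and modifies the final-band filling via the auxiliary parameter $t$. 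Your closing argument that $d(\alpha)\geq m-1$ via the columns $m+1,\dots,2m-1$ is reasonable in spirit, but it is predicated on a $\beta$ and a filling that your construction does not actually produce, so the proof as written does not establish the lemma.
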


	\begin{proof}
		As before, we define the partition $\beta\subseteq \lambda$ by constructing its Young diagram $T_{\beta}$ from that of $T_{\lambda}$ as follows:
		\begin{itemize}
			\item We choose all boxes from the first $m\times m$ square of $T_{\lambda}$.
			
			\vspace{2 mm}
			
			\item We choose a box from the first $(m-1)$ rows of $T_{\lambda}$ if there are at least $m$ boxes below it in its column.
			
			\vspace{2 mm}
			
			\item We say that a box in $T_{\lambda}$ satisfy \textbf{Property P} if there are at least $m$ boxes right to it (in its row). For each $1\leq j\leq m-1$, we choose those boxes in the $j$-th column that satisfy \textbf{Property P} but aren't  among the last $(m-1)$ boxes in the same column that satisfy \textbf{Property P}.
			
			\vspace{2 mm}
			
			\item At this point, we observe that the number of boxes chosen is clearly less than or equal to $n-k$, since there is an obvious injection from the set of chosen boxes to the set of non-chosen ones.
			
			\vspace{2 mm}
			
			\item Now, if there are more boxes to choose to form $\beta$, we do it in the following sequence:
				
				\vspace{2 mm}
				
				\begin{enumerate}
					\item Choose boxes from the first column (column-wise), then from the second column, and so on until the $(m-1)$-th column.
					
					\vspace{2 mm}
					
					\item Choose boxes from the first row (row-wise), then from the second row, and so on until the $(m-1)$-th row.
					
					\vspace{2 mm}
					
					\item Choose boxes from the $m$-th column (column-wise).
					
					\vspace{2 mm}
					
					\item Choose boxes from the $m$-th row (row-wise).
				\end{enumerate}
				
				\vspace{2 mm}
				
				\noindent We stop at that point of the above sequence when the total number of boxes chosen equals $n-k$.
				
			\vspace{2 mm}
				
			\item Since $d(\lambda)=2m-1$, the total number of boxes that are neither in the first $m$ rows nor in the first $m$ columns is clearly less than or equal to $k$, and hence, by performing the steps mentioned above, we certainly get $n-k$ many boxes at some stage.
			
			\vspace{2 mm}
			
			\item Finally, it is also clear that $d(\beta)=m$.
		\end{itemize}
		Now it is required to fill the skew shape $T_{\lambda/\beta}$ in a way that its type $\alpha$ has Durfee rank either $m-1$ or $m$ and it is a LR tableaux. For that, we make some observations on the skew diagram $T_{\lambda/\beta}$. We use the convention that the rows of $T_{\lambda/\beta}$ are labeled with respect to those of $T_{\lambda}$. With this convention, we have the following observations:
		
		\begin{itemize}
			\item The sub-diagram of $T_{\lambda/\beta}$ from the first row to the $(2m-1)$-th row has a $m\times m-1$ rectangle contained in it. To see this, write $\lambda=(\lambda_1,\ldots,\lambda_{2m-1},\lambda_{2m},\ldots, \lambda_{l})$ where $\lambda_i\geq 2m-1$ if $1\leq i\leq 2m-1$, and $\lambda_i\leq 2m-1$ if $2m\leq i\leq p$. Let $\beta=(\beta_1,\beta_2,\ldots)$. Now assume that our assertion is not true. By our choice of $\beta$, we have the following inequalities: (a) $\beta_i=\lambda_i$ when $1\leq i\leq m-1$, (b) $\beta_m\geq m$, $\beta_i=m$ when $m+1\leq i\leq 2m-1$, and there exists $j\in\{m,m+1,\ldots,2m-1\}$ such that $\lambda_j<\beta_m+m-1$, and (c) $\lambda_i-\beta_i<\beta_i$ for all $2m\leq i\leq p$. Note that
			\begin{equation}
				|T_{\beta}|-|T_{\lambda/\beta}|=\sum_{i}\beta_i-\sum_{i}(\lambda_i-\beta_i)=\sum_{i}(2\beta_i-\lambda_i)\leq 2,
			\end{equation}
			since $\beta\vdash n-k$ and $\alpha\vdash k$. Now, 
			$$\sum_i (2\beta_i-\lambda_i)=\sum_{i=1}^{m-1}\lambda_i+(2\beta_m-\lambda_m)+\sum_{i=m+1}^{2m-1} (2m-\lambda_i)+\sum_{i=2m}^{p}(2\beta_i-\lambda_i).$$
			The last summand in the above equation is always non-negative. In (b), if $j=m$, that is, $\lambda_m<\beta_m+m-1$, then $2\beta_m-\lambda_m>\beta_m+1-m\geq 1$. Also, $2m+\lambda_i-\lambda_{m+i}\geq 2m$ for all $1\leq i\leq m-1$. Hence $\sum_{i}(2\beta_i-\lambda_i)> 2$, a contradiction. Otherwise, in (b), if $j\neq m$, then $2m-\lambda_j>m-\beta_m+1$, whence $(2\beta_m-\lambda_m)+(2m-\lambda_j)>\beta_m+m+1-\lambda_m\geq 2m+1-\lambda_m$. But $\lambda_1+2m+1-\lambda_m\geq 2m+1$ and $\lambda_i+2m-\lambda_{k}\geq 2m$ for distinct pairs $(i,k)$ such that $2\leq i\leq m-1$ and $m+1\leq k\leq 2m-1$ with  $k\neq j$. This yields $\sum_{i}(2\beta_i-\lambda_i)>2$, once again a contradiction.
			
			\vspace{2 mm}
			
			\item There is no $(m+1) \times (m+1)$ square contained in $T_{\lambda/\beta}$.
			
			\vspace{2 mm}
			
			\item Let $p$ be the least positive integer such that the number of boxes (say $r_0$) in the $(2m-1+p)$-th row of $T_{\lambda/\beta}$ is less than $m$, that is, $r_0<m$. Note that $p$ always exists with the convention that there are zeroes after the last part of the skew partition $\lambda/\beta$. Let $\widetilde{T_{\lambda/\beta}}$ be the sub-diagram of $T_{\lambda/\beta}$ starting from the $(2m-1+p)$-th row to the last row (say $(2m-1+p+d)$-th row). We write $\widetilde{T_{\lambda/\beta}}=(r_0,r_1,\ldots,r_d)$, where $r_i$ denotes the number of boxes in the $(2m-1+p+i)$-th row. Then there exists a positive integer $c$ (possibly) such that $r_0\geq \cdots \geq r_{c-1}$, $r_c=r_{c-1}+1$, and $r_c\geq \cdots \geq r_d$. We remark that such a $c$ may not exist, in which case $r_0\geq \cdots \geq r_d$. The reason for this is exactly as in the previous lemma.
		\end{itemize}
		
		\medskip
		
		\noindent With these observations, we are now in a position to fill $T_{\lambda/\beta}$. We divide this filling into three parts.
		
		\medskip
		
		For the first $(2m-1)$ rows of $T_{\lambda/\beta}$, we fill each column with numbers $1$ to $j$ in increasing order, where $j$ is the number of boxes in that column. Clearly, $j\leq m$ by our choice of $\beta$. Also, since there is a $m\times m-1$ rectangle in the first $2m-1$ rows, we conclude that each of the numbers from $1$ to $m$ has been used at least $m-1$ many times. It is obvious that the filling till the  $(2m-1)$-th row is semi-standard, and the reverse reading word  is a lattice permutation. To fill the remaining part of the diagram, we make cases.
			
		\noindent \textbf{Case I}: Suppose that in our filling of the first $(2m-1)$ rows, the number $m$ (hence all the numbers from $1$ to $m-1$) has occurred at least $m$ times. In this case, the next two sub-diagrams are to be filled in exactly the same way as we did in the previous lemma. We observe that the type $\alpha$ has Durfee rank $m$ in this case.
		
		\medskip
		
		\noindent \textbf{Case II}: Suppose that in our filling of the first $(2m-1)$ rows, the number $m$ has occurred exactly $m-1$ many times. Now we fill  the sub-diagram  of $T_{\lambda/\beta}$ from $2m$-th to $(2m-2+p)$-th row (at this step it is assumed $p\geq 2$). Let this sub-diagram be written as $(s_1,\ldots,s_{p-1})$, where $s_i$ is the number of boxes in the $(2m-1+i)$-th row. Then $m\leq s_i \leq 2m-1$ (by the choice of $p$). We fill the boxes of the $(2m-1+i)$-th row by the following two steps.
		
		\vspace{2 mm}
		
		\noindent \textbf{Step 1}: We fill the last $m-1$ boxes of the row by  the number $m+i$.
		
		\vspace{2 mm}
		
		\noindent \textbf{Step 2}: We fill the row from left to right with \emph{``least possible numbers''} in a semi-standard way, but with the restriction that whenever a number bigger than $m$ is required it must be the least number that has not been used $m$ many times (in the filling till then).
		
		\medskip
		
		\noindent The reason we can get this done in a semi-standard way is as follows: For any $1\leq i\leq p-1$, consider the $(2m-1+i)$-th row. Notice that if any number greater than $m$ appears in this row, it must appear exactly once. Indeed, if it appears twice, then there is a $(m+1)\times (m+1)$ square contained in the diagram $T_{\lambda/\beta}$, a contradiction. Suppose now that this unique number bigger than $m$ has appeared in the concerned row. We claim that this number is certainly less than or equal to $m+i$. If not, then by our process mentioned in step 2, we get that there is a box in the skew diagram of $T_{\lambda/\beta}$ that lies in the first $m\times m$ square of $T_{\lambda}$. This is once again a contradiction to the choice of $\beta$. Thus, we conclude that our filing in step 2 is possible (that is, our filling is semi-standard). Also, the fact that the reverse reading word (till now) is a lattice permutation follows from the choice of our filling. 
		
		\medskip
		
		Now we proceed to fill the last diagram $\widetilde{T_{\lambda/\beta}}$ (at this point, recall the observation we made about this diagram before). Let $t$ be the least positive integer such that for each $t\leq j\leq p-1$, $m+j$ has occurred exactly $m-1$ many times (note that these occur in the previous sub-diagram). We need to fill from the $(2m-1+p)$-th row to the $(2m-1+p+d)$-th row.  If $r_0\geq \cdots \geq r_d$, we fill all the boxes of the $(2m-1+p+i)$-th row with $m+p+i$ where $0\leq i\leq d$. Otherwise, $r_0\geq \cdots \geq r_{c-1}$, $r_c=r_{c-1}+1$, and $r_c\geq \cdots \geq r_d$. Let $e\geq 2$ be the least positive integer such that $r_{c+e}\leq r_0$. If such $e$ does not exist, we take $e=d+1$. Then $r_{c-1}+1=r_c=\cdots=r_{c+e-1}$. We fill all boxes of the $(2m+p+i)$-th row with $m+p+i$ when $1\leq i\leq c-1$. For $c\leq i\leq e-1$, we fill the first box of the $(2m+p+i)$-th row with the least number possible except the numbers $m+1$ to $m+t-1$ (thus if $t=1$ we have no restriction), and the remaining boxes with $m+p+i$. For $e\leq i\leq d$, all boxes of the $(2m+p+i)$-th row are filled with $m+p+i$ once again.

		\medskip
		
		\noindent Overall, we get a LR tableaux whose type $\alpha$ has Durfee rank either $m-1$ or $m$, and we are done.
	\end{proof}
	
	\noindent If $f,g\in \Lambda_n$, then we say $f\geq g$ if $f-g$ is Schur-positive. \Cref{durfee_lemma_1} and \Cref{durfee_lemma_2} have the following interesting corollary. 
	
	\begin{corollary}
		\begin{enumerate}
			\item Let $m\geq 2$. Then, for every $\lfloor \frac{n-1}{2}\rfloor\leq k\leq \lfloor \frac{n-1}{2}\rfloor+m^2$, we have $$\displaystyle (\sum_{\substack{\alpha\vdash k\\d(\alpha)=m}}s_{\alpha})(\sum_{\substack{\beta \vdash n-k\\d(\beta)=m}} s_{\beta})\geq \sum_{\substack{\lambda\vdash n \\ d(\lambda)=2m}} s_{\lambda}.$$
			
			\item Let $m\geq 2$. Then, for every $\lfloor \frac{n-1}{2}\rfloor\leq k\leq \lfloor \frac{n-1}{2}\rfloor+m^2-(m-1)$, we have $$\displaystyle (\sum_{\substack{\alpha\vdash k\\m-1\leq d(\alpha)\leq m}}s_{\alpha})(\sum_{\substack{\beta \vdash n-k\\d(\beta)=m}} s_{\beta})\geq \sum_{\substack{\lambda\vdash n \\ d(\lambda)=2m-1}} s_{\lambda}.$$
		\end{enumerate}
	\end{corollary}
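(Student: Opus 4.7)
The inequality $f \geq g$ asserts that $f - g$ is a non-negative combination of Schur functions, so the content of both parts is: for every $\lambda$ of the stated Durfee rank and every $k$ in the indicated range, there exist $\alpha \vdash k$ and $\beta \vdash n - k$ satisfying the Durfee-rank constraints with $c^\lambda_{\alpha\beta} \geq 1$. The case $k = k_0 := \lfloor (n-1)/2 \rfloor$ is exactly Lemma \ref{durfee_lemma_1} for part (1) and Lemma \ref{durfee_lemma_2} for part (2); the remaining work is to extend these to every other $k$ in the range.

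My plan is to bootstrap from those lemmas by exploiting the flexibility already present in the construction of $\beta \subseteq \lambda$. Recall that $\beta$ is built up box-by-box in a specified priority order: first the boxes forced by the requirement $d(\beta) = m$, then the \emph{extra} boxes drawn from the first $m$ (or $m-1$) rows and columns that lie outside the Durfee square. Stopping this process $j$ boxes earlier produces a valid $\beta \subseteq \lambda$ of size $n - (k_0 + j)$ with $d(\beta) = m$, provided at least $j$ extras are available in the ordering. A short inspection shows that the supply of extras is at least $m^2$, and at least $m^2 - (m-1)$ in the asymmetric setting of Lemma \ref{durfee_lemma_2} — because the presence of an $m \times (m-1)$ top rectangle rather than a full $m \times m$ Durfee square eats up $m-1$ units of slack — which accounts for the upper endpoints in the two statements.

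With the shortened $\beta$ chosen, I would extend the LR filling of $\lambda/\beta$ described in each lemma proof to the enlarged skew shape. The newly exposed boxes all lie inside the first $m$ rows and columns of $T_\lambda$, so they integrate seamlessly into the ``top'' region of the filling, where each column is filled from top to bottom with the consecutive entries $1, 2, \ldots, r$ (with $r \leq m$). This extension preserves semistandardness and the lattice-word property, and critically does not increase $\alpha_i$ for any $i > m$. Consequently the type $\alpha$ of the new LR tableau is a partition of $k_0 + j$ whose Durfee rank remains $m$ in case (1) and lies in $\{m-1, m\}$ in case (2), as required.

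The main obstacle is verifying LR-admissibility uniformly across the range $0 \leq j \leq m^2$: the newly inserted boxes must fit column-strictly without forcing any entry larger than $m$, and the reverse reading word must remain a lattice permutation at every stage. Both conditions should stem from the careful priority ordering that drove the original lemma proofs (which placed exactly the correct boxes into $\beta$ to guarantee column availability in $\lambda/\beta$), and I anticipate handling them by a direct induction on $j$, checking the invariant at each step that the $j$-th newly exposed box sits at the bottom of a column of $\lambda/\beta$ containing at most $m - 1$ earlier entries. Once this bookkeeping is in place, the two parts of the corollary follow.
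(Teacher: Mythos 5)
Your proposal follows essentially the same route as the paper: the paper likewise reduces to the endpoint $k=\lfloor\frac{n-1}{2}\rfloor$, which is exactly the two Durfee-rank lemmas, and then observes that the region from which $\beta$ is drawn (the boxes lying in the first $m$ rows or first $m$ columns of $T_\lambda$) contains at least $\lfloor\frac{n-1}{2}\rfloor+m^2$ boxes, so the same construction of $\beta$ and the same LR filling go through unchanged for every $k$ in the stated range, with the loss of $m-1$ in part (2) accounted for just as you describe. The one inaccuracy is your claim that the newly exposed boxes all integrate into the ``top'' $2m$ rows of the filling --- boxes released from the first $m$ columns may sit far below row $2m$ and belong to the middle or tail portions of the filling --- but since the paper simply re-runs the entire three-part filling algorithm on the enlarged skew shape rather than patching the top region, this does not affect the substance of the argument.
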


	\begin{proof}
		If $k=\lfloor \frac{n-1}{2} \rfloor$, the statements in (1) and (2) are equivalent to \Cref{durfee_lemma_1} and \Cref{durfee_lemma_2}, respectively.
		Let $\lambda \vdash n$ with $d(\lambda)=2m$. Let $H_m$ be the partition obtained from $T_{\lambda}$ by removing the boxes $(i,j)$ where $i\geq m+1$ or $j\geq m+1$. Then $|H_m|\geq \lfloor \frac{n-1}{2} \rfloor+m^2$. Therefore, if $\lfloor \frac{n-1}{2} \rfloor<k\leq \lfloor \frac{n-1}{2} \rfloor+m^2$, the proof of \Cref{durfee_lemma_1} holds without any alteration. This proves (1). 
		
		Now assume $\lambda\vdash n$ with $d(\lambda)=2m-1$. Let $H_m$ be defined exactly as in the previous paragraph. Then $|H_m|\geq \lfloor \frac{n-1}{2} \rfloor+m^2$. If $\beta$ in the proof of \Cref{durfee_lemma_2} is such that $|\beta|\leq \lfloor \frac{n-1}{2} \rfloor+m^2-m+1$, the proof once again works without any alteration.
	\end{proof}
	\begin{lemma}\label{durfee_lemma_3}
		Let $k>0$ and $\alpha\vdash k,\beta \vdash n-k$ be such that $d(\alpha)=d(\beta)=m$. Then there exists $\eta \vdash n$ such that $d(\eta)=m$ and  $c^{\eta}_{\alpha\beta}>0$. The same conclusion is valid if $d(\alpha)=m-1$ and $d(\beta)=m$.
	\end{lemma}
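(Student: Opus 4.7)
The plan is to exhibit a single Littlewood--Richardson tableau of shape $\eta/\alpha$ and content $\beta$ for an explicitly constructed $\eta \vdash n$ with $d(\eta) = m$; by \Cref{the_LR_rule} this will yield $c^{\eta}_{\alpha\beta} \geq 1$. Let $\gamma$ be the partition obtained by listing the parts $\alpha_{m+1}, \alpha_{m+2}, \ldots$ together with $\beta_{m+1}, \beta_{m+2}, \ldots$ and sorting them into a weakly decreasing sequence. Define
\[
\eta_i := \begin{cases} \alpha_i + \beta_i, & 1 \leq i \leq m, \\ \gamma_{i-m}, & i > m. \end{cases}
\]
First I would verify that $\eta$ is a valid partition of $n$ with $d(\eta) = m$. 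The identity $|\eta| = |\alpha| + |\beta| = n$ is immediate; weak decrease within the top $m$ rows is inherited from $\alpha, \beta$, and within the lower rows it is built into $\gamma$. The junction is safe because $\eta_m = \alpha_m + \beta_m \geq m$ (as $\beta_m \geq m$) and $\eta_{m+1} = \gamma_1 \leq m$ (both $\alpha_{m+1} \leq m$---true whether $d(\alpha) = m$ or $d(\alpha) = m-1$---and $\beta_{m+1} \leq m$), which also forces $d(\eta) = m$.

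Next I would construct the LR tableau. The skew shape $\eta/\alpha$ decomposes into a top part (rows $1 \leq i \leq m$, where row $i$ contributes $\beta_i$ skew boxes placed in columns $\alpha_i+1, \ldots, \alpha_i+\beta_i$) and a bottom part which is precisely the skew shape $\gamma/\alpha^{(2)}$, where $\alpha^{(2)} = (\alpha_{m+1}, \alpha_{m+2}, \ldots)$. Fill each top row $i$ entirely with the symbol $i$. For the bottom, take any LR tableau of shape $\gamma/\alpha^{(2)}$ and content $\tilde\beta := (\beta_{m+1}, \beta_{m+2}, \ldots)$ and shift its entries up by $m$. Such a bottom filling exists because $\gamma$ is the sorted merge of $\alpha^{(2)}$ and $\tilde\beta$, and the positivity $\langle s_{\alpha^{(2)}} s_{\tilde\beta}, s_\gamma \rangle \geq 1$ is a direct application of \Cref{the_LR_rule} to the merged shape.

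Finally, I would check the two LR conditions. Weak row increase is automatic (top rows are constant; the bottom rows come from an LR filling). Column strict increase holds because the top block places labels $1, \ldots, m$ along strictly increasing row indices in each occupied column, and any column also hit by a bottom-block entry receives there a label $>m$. For the lattice permutation condition on the reverse reading word, the top block contributes $\beta_1$ ones, then $\beta_2$ twos, \ldots, then $\beta_m$ instances of $m$---a lattice prefix because $\beta$ is a partition---followed by a tail of labels $\geq m+1$ that is lattice by the bottom filling; the $m$-to-$(m+1)$ junction is safe since after the top block the count of $m$'s is $\beta_m \geq \beta_{m+1}$, which dominates the count of $(m+1)$'s at any later stage. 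The main obstacle will be the case $d(\alpha) = m-1$, where row $m$ of the skew (filled with $m$'s) may intrude into columns $\leq m$ that overlap with the bottom block; however, the strict column increase is preserved in those overlap columns because every bottom-block entry there carries a label strictly greater than $m$.
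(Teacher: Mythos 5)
Your proof is correct and is essentially the paper's construction: the $\eta$ you build (top rows $\alpha_i+\beta_i$, bottom rows the sorted merge $\gamma$ of the two tails) is exactly the partition the paper obtains by adjoining the first $m$ rows of $T_\beta$ to those of $T_\alpha$ and appending the remaining boxes of $\beta$ column by column, and the resulting filling is the same. The only step you leave slightly implicit is the positivity $c^{\gamma}_{\alpha^{(2)}\tilde\beta}>0$ for the sorted merge, which is the standard fact $c^{\mu\cup\nu}_{\mu\nu}=1$ (conjugate to $c^{\mu'+\nu'}_{\mu'\nu'}=1$) and is precisely what the paper's explicit column-wise appending of the leftover rows witnesses directly.
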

	
	\begin{proof}
		Let $\alpha=(\alpha_1,\alpha_2,\ldots)\vdash k$ and $\beta=(\beta_1,\beta_2,\ldots)\vdash n-k$. We have $\alpha_1\geq\cdots\geq \alpha_m\geq m$ and $\alpha_{m+1}\leq m$. The same holds true for $\beta$. Consider the Young diagram of $\beta$ and fill it with the number $i$ in all the boxes of the $i$-th row. Construct a partition $\eta=(\eta_1,\eta_2,\ldots)\vdash n$ by adjoining the (filled) Young diagram of $\beta$ to $\alpha$ as follows: For $1\leq i\leq m$, the $i$-th row of $T_{\beta}$ is adjoined to the $i$-th row of $T_{\alpha}$. The remaining rows are appended one after the other by putting their boxes one-by-one below the first column, then the second column, and so on until required. Since $\beta_i\leq m$  when $i\geq m+1$, it is not required to go beyond the $m$-th column for appending these rows. By the construction of $\eta$, it easily follows that $\eta\vdash n$ has Durfee rank $m$, $\alpha\subseteq \eta$, and $\eta/\alpha$ is a LR-tableaux of type $\beta$, whence $c^{\eta}_{\alpha\beta}>0$ as required.  The same construction works if we take $d(\alpha)=m$ and $d(\beta)=m-1$, whence $c^{\lambda}_{\beta\alpha}=c^{\lambda}_{\alpha\beta}>0$. We conclude the proof by illustrating the construction with a simple example (see \Cref{Fig_8} below). Let $\alpha=(5,4,3,3,1)\vdash 16$ and $\beta=(6,6,3,3,3,2,1)\vdash 24$. Note that $d(\alpha)=d(\beta)=3$.
		
		\begin{figure}[!h]
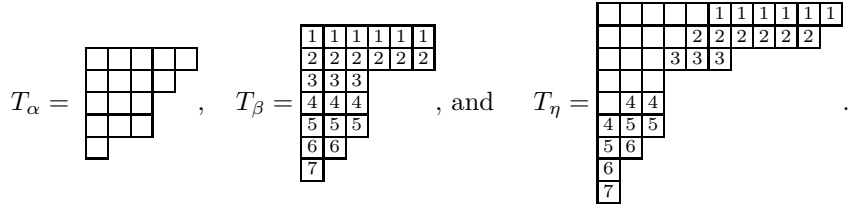

			$T_{\alpha}\;$=\; 
			\begin{ytableau}
			   ~ &   &  &  &\\
			     &   &  &\\
			     &   &\\
			     &   &\\
			     \\
			\end{ytableau},\;\;\;
			$T_{\beta}\;$=\;\begin{ytableau}
				1 & 1 & 1& 1& 1 & 1  \\
				2 & 2 & 2& 2& 2 & 2  \\
				3 & 3 & 3\\
				4 & 4 & 4\\
				5 & 5 & 5\\
				6 & 6\\
				7\\
			\end{ytableau}, and \;\;\;
			$T_{\eta}\;$=\;\begin{ytableau}
				~& & & & & 1 & 1 & 1 & 1 & 1 & 1\\
				 & & & & 2 & 2 & 2& 2& 2 & 2\\
				 & & & 3 & 3 & 3\\
				 &   &\\
				 & 4 & 4\\
				 4 & 5 & 5\\
				 5 & 6\\
				 6\\
				 7
			\end{ytableau}.
		\caption{Construction of $\eta$.}
		\label{Fig_8}
		\end{figure}
	\end{proof}
	
	The following easy observation will be the final ingredient for the proof of \Cref{Theorem_3}(2).
	
	\begin{lemma}\label{durfee_inequality_LR_coeff}
		Let $\mu\vdash m$, $\nu \vdash n$, and $\lambda \vdash m+n$. If $c^{\lambda}_{\mu\nu}>0$, then $d(\lambda)\leq d(\mu)+d(\nu)$.
	\end{lemma}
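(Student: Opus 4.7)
The plan is to work directly with the Littlewood-Richardson rule: since $c^{\lambda}_{\mu\nu}>0$, I fix an LR tableau $T$ of shape $\lambda/\mu$ and type $\nu$ and deduce $d(\lambda)\leq d(\mu)+d(\nu)$ from properties of $T$.

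First I would establish the following ``row bound'' lemma: for every cell $(i,j)\in\lambda/\mu$, the entry satisfies $T(i,j)\leq i$. The proof is by induction on $k=T(i,j)$. If $i<k$, then the row-weak increase forces every entry of row $i$ at column $\geq j$ to be $\geq k$, so no $(k-1)$ can appear there; the Yamanouchi condition on the reverse reading word then forces a $(k-1)$ to appear strictly earlier in the reading, hence in some row $i_1<i$; iterating through $k-2,k-3,\ldots,1$ yields a strictly decreasing chain $i>i_1>\cdots>i_{k-1}\geq 1$ of length $k$, so $i\geq k$, a contradiction.

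Next, writing $a=d(\mu)$ and $b=d(\nu)$, the goal reduces to proving the Horn-type single-row inequality
\[
\lambda_{i+j-1}\leq \mu_i+\nu_j \qquad \text{for all } i,j\geq 1,
\]
since specializing to $(i,j)=(a+1,b+1)$ gives $\lambda_{a+b+1}\leq \mu_{a+1}+\nu_{b+1}\leq a+b$ (using $\mu_{a+1}\leq a$ and $\nu_{b+1}\leq b$), which forces $d(\lambda)\leq a+b$. To extract this inequality from $T$, I would focus on row $i+j-1$ of $\lambda/\mu$: all of its $\lambda_{i+j-1}-\mu_{i+j-1}$ entries are $\leq i+j-1$ by the row bound, and combining column-strict increase in columns $\mu_{i+j-1}+1,\ldots,\lambda_{i+j-1}$ with the monotonicity $\mu_{i+j-1}\leq \mu_i$ and the type bound coming from $\nu$ should yield the claim.

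The hard part will be making this last step completely rigorous. The easy consequence of the row bound is only the cumulative inequality $\sum_{r\leq R}(\lambda_r-\mu_r)\leq \sum_{k\leq R}\nu_k$, which does not isolate a single row. A promising route around this is to simultaneously use the conjugate LR tableau $T^{*}$ witnessing $c^{\lambda'}_{\mu'\nu'}>0$: the analogous row bound $T^{*}(i,j)\leq i$ for $T^{*}$ translates, after swapping rows and columns, into a column bound on the entries of the original diagram. Combining the row and column bounds should allow a direct cell-counting argument in the $(b+1)\times(b+1)$ subsquare $[a+1,a+b+1]\times[a+1,a+b+1]\subseteq \lambda/\mu$ to rule out $d(\lambda)\geq a+b+1$ outright.
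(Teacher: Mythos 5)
Your overall strategy is sound in outline and your first lemma is fine: the bound $T(i,j)\le i$ for an LR tableau is correct and your induction sketch proves it, and the reduction of the Durfee inequality to the single Weyl inequality $\lambda_{a+b+1}\le \mu_{a+1}+\nu_{b+1}\le a+b$ (with $a=d(\mu)$, $b=d(\nu)$) is also correct. The genuine gap is that you never prove $\lambda_{i+j-1}\le\mu_i+\nu_j$, and you say so yourself; neither of the routes you sketch closes it. The cumulative inequality $\sum_{r\le R}(\lambda_r-\mu_r)\le\sum_{k\le R}\nu_k$ really is too weak (a single very long first row of $\mu$ makes it vacuous for the row you care about), and the conjugate-tableau idea has a structural problem: the LR tableau witnessing $c^{\lambda'}_{\mu'\nu'}>0$ is a \emph{different filling}, not the transpose of $T$, so the ``column bound'' it yields says something about the conjugate shape, not about the entries of $T$, and cannot be combined cell-by-cell with the row bound inside the subsquare you name. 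As written, the argument stops exactly at the step that carries all the content.

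The missing step can be closed by a lattice-word count, which is essentially what the paper does (applied directly to the square of side $d(\lambda)-d(\mu)$ contained in $\lambda/\mu$, rather than routed through a Weyl inequality). Concretely, set $r=i+j-1$ and consider the cells $(r,c)$ with $\mu_i<c\le\lambda_r$: since $\mu_{i'}\le\mu_i<c\le\lambda_r\le\lambda_{i'}$ for $i\le i'\le r$, each such column contains the $j$ skew cells $(i,c),\dots,(r,c)$, so column-strictness gives $T(r,c)\ge j$. Reading these $N=\lambda_r-\mu_i$ cells right to left, their entries are weakly decreasing and all $\ge j$, and an easy induction on $k$ using the Yamanouchi condition (if the $k$-th entry read equals the previous one, the count of that letter has gone up by one; if it drops, the count of the new, smaller letter already dominates the count of the old one) shows that after the $k$-th of these cells the letter $j$ has occurred at least $k$ times in the reverse reading word. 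Hence $\nu_j\ge N$, which is exactly $\lambda_{i+j-1}\le\mu_i+\nu_j$. The paper's proof is the case of the bottom row of the square in rows and columns $d(\mu)+1,\dots,d(\lambda)$, whose leftmost entry $a$ satisfies $a\ge d(\lambda)-d(\mu)>d(\nu)$, giving $\nu_a\ge d(\lambda)-d(\mu)$ against $\nu_a\le d(\nu)$. So your plan is salvageable and, once repaired, is essentially the paper's argument in a different costume; but the decisive counting argument is absent from the proposal.
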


	\begin{proof}
		Consider the skew shape $T_{\lambda/\mu}$. Clearly, it has a square of  size $d(\lambda)-d(\mu)$. Assume that $d(\lambda)-d(\mu)>d(\nu)$. By assumption, $c^{\lambda}_{\mu\nu}>0$, and hence there exists a LR tableaux of shape $\lambda/\mu$ and type $\nu$. Consider such a filling. In this filling, any number greater than $d(\nu)$ must occur at most $d(\nu)$ times. Since the filling is semi-standard, the left bottom corner of the square must have a filling, say $a$, where $a>d(\nu)$. The last row of the square must be filled with a sequence of numbers that is weakly increasing. Since the reverse reading word of the filling must also be a lattice permutation, if we traverse the row from right to left, we observe that for each entry, we must have filled the diagram with the number $a$ in some box above that row. This accounts for at least $d(\lambda)-d(\mu)$ many occurrences of $a$ in the diagram, a contradiction.
	\end{proof}

	\begin{proof}[\textbf{Proof of \Cref{Theorem_3}(2)}]
		We claim that $c(\chi_{\mu(k)}^2)$ consists of all $\chi_{\lambda}$'s such that $d(\lambda)\leq 2$ except possibly the sign character. But this can be easily verified using \Cref{Kronecker_coeff_two_hook_shapes}. Indeed, if $\lambda=(n-r,1^r)$ where $0\leq r\leq n-2$, the result follows easily by taking $e=f=k$ in \Cref{Kronecker_coeff_two_hook_shapes}. Let $\lambda=(n_4,n_3,d_2,d_1)$ be a double hook as in \Cref{Kronecker_coeff_two_hook_shapes}. Then $n_4+n_3=n-x$ where $x=2d_2+d_1$. Notice that the second summand in \Cref{Kronecker_coeff_two_hook_shapes}(4) is always 1. This yields that $g_{\mu(k)\mu(k)\lambda}>0$, thereby establishing our claim.
		
		Observe that $\chi_{\mu(k)}\in c(\chi_{\mu(k)}^2)$ and hence $c(\chi_{\mu(k)}^i)\subseteq c(\chi_{\mu(k)}^{i+1})$ for $i\geq 2$. Since the sign character $\epsilon$ clearly belongs to $c(\chi_{\mu(k)}^3)$, we conclude that $\epsilon \in c(\chi_{\mu(k)}^i)$ for $i\geq 3$.
		
		\medskip
		
		Now we claim that $c(\chi_{\mu(k)}^i)\setminus \{\epsilon\}=\{\chi_{\lambda}\mid d(\lambda)\leq 2^{i-1}\}\setminus \{\epsilon \}$ for every $2\leq i\leq l$, where $l$ is the least integer such that $2^{l-1}\geq \lfloor \sqrt{n} \rfloor$. Setting $i=l$ in the claim, we obtain that $c(\chi_{\mu(k)}^l)=\irr(S_n)$, and hence our theorem is proved once we establish our claim. We prove it using induction on $i$. For $i=2$, we have the claim from the previous paragraph. Let us assume it is true for all $i$ such that $2\leq i<r\leq l$. We prove the claim for $i=r$. We first show that $\{\chi_{\lambda}\mid d(\lambda)\leq 2^{r-1}\}\setminus \{\epsilon\}\subseteq c(\chi_{\mu(k)}^r)$. Let $(1^n)\neq \lambda \vdash n$ be such that  $2<d(\lambda)\leq 2^{r-1}$.  By using \Cref{durfee_lemma_1}, \Cref{durfee_lemma_2}, and \Cref{durfee_lemma_3}, we conclude that there exists $\alpha\vdash k$, $\beta\vdash n-k$, and $\eta\vdash n$ such that $d(\eta)=\lfloor \frac{d(\lambda)}{2} \rfloor$ or $d(\eta)=\lceil \frac{d(\lambda)}{2} \rceil$ and $c^{\lambda}_{\alpha\beta}c^{\eta}_{\alpha'\beta}>0$, whence \Cref{Kroncker_coeff_and_LR_for_hook} yields that $g_{\lambda\mu(k)\eta}+g_{\lambda\mu(k-1)\eta}>0$. Since $d(\eta)\leq 2^{r-2}$, by induction hypothesis, $\chi_{\eta}\in c(\chi_{\mu(k)}^{r-1})$. Thus, if $g_{\lambda\mu(k)\eta}>0$, then $\chi_{\lambda}\in c(\chi_{\mu(k)}\chi_{\eta})\subseteq c(\chi_{\mu(k)}^r)$ as desired. Otherwise, $g_{\lambda\mu(k-1)\eta}>0$, whence $\chi_{\lambda}\in c(\chi_{\mu(k-1)}\chi_{\eta})\subseteq c(\chi_{\mu(k-1)}\chi_{\mu(k)}^{r-1})\subseteq c(\chi_{\mu(k)}^{r})$ by \Cref{hook_containment_lemma} as desired. Now we show that $c(\chi_{\lambda}^i)\setminus \{\epsilon\}\subseteq \{\chi_{\lambda}\mid d(\lambda)\leq 2^{i-1}\}$. Let $\chi_{\nu}\in c(\chi_{\lambda}^i)\setminus \{\epsilon\}$. By induction hypothesis, we can conclude that $\chi_{\nu}\in c(\chi_{\lambda}\chi_{\mu(k)})$ for some $\lambda$ with $d(\lambda)\leq 2^{i-2}$. This implies that $g_{\lambda\mu(k)\nu}>0$, whence by \Cref{Kroncker_coeff_and_LR_for_hook}, we conclude that there exists $\alpha \vdash k, \beta \vdash n-k$ such that $c^{\lambda}_{\alpha\beta}c^{\nu}_{\alpha'\beta}>0$. Since $d(\lambda)\leq 2^{i-2}$, we obtain that $d(\alpha), d(\beta)\leq 2^{i-2}$. Using \Cref{durfee_inequality_LR_coeff}, we get $d(\nu)\leq 2^{i-1}$ which yields the desired inclusion.
	\end{proof}
	
	\begin{remark}
		Let $\lambda=(\frac{n+1}{2},1^{\frac{n-1}{2}})$ where $n$ is odd.  Then $\ccn(\sigma_{\lambda};S_n)=2$. Thus, in this case, $\ccn(\sigma_{\lambda})$ is much less than $\ccn(\chi_{\lambda})$, unlike the case of two-row partitions. Also note that if $n$ is even, $\ccn(\sigma_{\lambda};S_n)=2$ when $\lambda=(\frac{n}{2},1^{\frac{n}{2}})$, and $\ccn(\sigma_{\lambda};S_n)=3$ when $\lambda=(\frac{n}{2}+1,1^{\frac{n}{2}-1})$.
	\end{remark}
	\section{Proof of \Cref{Theorem_4}}
	 We briefly discuss the irreducible characters of $A_n$ to set down the notations. We denote $\res_{A_n}^{S_n}\chi_{\lambda}$ by $\chi_{\lambda}\downarrow$. Recall that $m(\pi)$ denotes the cycle-type of $\pi\in S_n$ and is a partition of $n$. Let $\text{DOP}(n)$ denote the set of all partitions of $n$ with distinct and odd parts. Further, $\text{SP}(n)$ denotes the set of all self-conjugate partitions of $n$. The folding algorithm defines a bijection $\phi: \mathrm{DOP}(n)\to \mathrm{SP}(n)$ (see \cite[Lemma 4.6.16]{ap}). For $\mu \in \mathrm{DOP}(n)$, the conjugacy class of $S_n$ parameterized by $\mu$ (say $C_{\mu}$) splits into two conjugacy classes of $A_n$ of equal size, that is, $C_{\mu}=C_{\mu}^{+}\sqcup C_{\mu}^{-}$. As convention, we assume that $w_{\mu}\in C_{\mu}^{+}$. Set $w_{\mu}^{+}:=w_{\mu}$  and $w_{\mu}^{-}$ to be a fixed element of $C_{\mu}^{-}$. If $\mu \in \mathrm{DOP}(n)$, we write $\mu=(2m_1+1,2m_2+1,\dots)$. The following theorem describes the irreducible characters of $A_n$ and their character values.
	 
	 \begin{theorem}{\cite[Theorem 5.12.5]{ap}}\label{irreps_alt}
	 	Let $\lambda \vdash n$. Then
	 	\begin{enumerate}
	 		\item If $\lambda\neq \lambda'$, then $\chi_{\lambda}\downarrow$ is an irreducible character of $A_n$. Further, $\chi_{\lambda}\downarrow=\chi_{\lambda'}\downarrow$.
	 		
	 		\item If $\lambda=\lambda'$, then $\chi_{\lambda}\downarrow$ decomposes into two irreducible characters of $A_n$, say $\chi_{\lambda}^{+}$ and $\chi_{\lambda}^{-}$, that is, $\chi_{\lambda}\downarrow=\chi_{\lambda}^{+}+\chi_{\lambda}^{-}$. Moreover, for any odd permutation $\pi$, $\chi_{\lambda}^{-}(w)=\chi_{\lambda}^{+}(\pi w \pi^{-1})$ for all $w \in A_n$.
	 		
	 		\item We have $\chi_{\lambda}^{+}(w)=\chi_\lambda^{-}(w)=\chi_{\lambda}(w)/2$ unless $m(w)$ is the partition $\mu:=\phi^{-1}(\lambda)$ having distinct and odd parts, in which case
	 		$$\chi_{\lambda}^{\pm}(w_{\mu}^{+})=\frac{1}{2}\left(\epsilon_{\mu}\pm \sqrt{\epsilon_{\mu}z_{\mu}}\right),$$
	 		and $\chi_{\lambda}^{\pm}(w_{\mu}^{-})=\chi_{\lambda}^{\mp}(w_{\mu}^{+})$. Here, $\epsilon_{\mu}=(-1)^{m_1+m_2+\cdots}$ and $z_{\mu}$ is the size of the centralizer of $w_{\mu}$ in $S_n$.
	 	\end{enumerate}
	 \end{theorem}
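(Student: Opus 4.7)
\medskip

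\noindent\textbf{Proof proposal for \Cref{irreps_alt}.} The plan is to use Clifford theory for the normal subgroup $A_n\trianglelefteq S_n$ of index $2$, together with the Murnaghan--Nakayama rule applied to self-conjugate partitions, to pin down the character values on classes that split upon restriction. Throughout, we use $\epsilon\otimes\chi_{\lambda}=\chi_{\lambda'}$ and the basic fact that for a subgroup $H$ of index $2$ in $G$, an irreducible character $\chi$ of $G$ restricts either irreducibly (and then $\chi\otimes\psi=\chi$ only for the trivial linear character $\psi$ of $G/H$) or as a sum of two distinct irreducibles of equal degree (and then $\chi\otimes\psi=\chi$ for both linear characters of $G/H$). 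This dichotomy together with Frobenius reciprocity is essentially Mackey's criterion.

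First I would prove (1) and (2). By the dichotomy above applied to $G=S_n$, $H=A_n$, and $\psi=\epsilon$, the restriction $\chi_{\lambda}\downarrow$ is irreducible iff $\chi_{\lambda}\otimes \epsilon\neq \chi_{\lambda}$, i.e.\ iff $\lambda\neq\lambda'$. When $\lambda\neq\lambda'$, the equality $\chi_{\lambda}\downarrow=\chi_{\lambda'}\downarrow$ is immediate from $\chi_{\lambda'}=\epsilon\chi_{\lambda}$ and $\epsilon|_{A_n}=\mathbbm{1}$. When $\lambda=\lambda'$, set $\chi_{\lambda}\downarrow=\chi_{\lambda}^{+}+\chi_{\lambda}^{-}$ with $\chi_{\lambda}^{+}\neq\chi_{\lambda}^{-}$ irreducible of equal degree. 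For any $\pi\in S_n\setminus A_n$, the class function $w\mapsto \chi_{\lambda}^{+}(\pi w\pi^{-1})$ is an irreducible constituent of $\chi_{\lambda}\downarrow$ (since $\chi_{\lambda}$ is $S_n$-invariant); it must differ from $\chi_{\lambda}^{+}$ itself because otherwise $\chi_{\lambda}^{+}$ would extend to an irreducible character of $S_n$, forcing $\chi_{\lambda}\downarrow$ to have $\chi_{\lambda}^{+}$ with multiplicity $\geq 2$, contradicting multiplicity freeness; hence this conjugate equals $\chi_{\lambda}^{-}$, which is exactly the relation in (2).

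For (3), I would split into two cases according to whether the $S_n$-class $C_{m(w)}$ splits upon restriction to $A_n$. A classical consequence of Clifford theory is that $C_{\mu}$ splits in $A_n$ iff the centralizer of $w_\mu$ in $S_n$ lies in $A_n$, iff $\mu$ has distinct odd parts, i.e.\ $\mu\in \mathrm{DOP}(n)$. If $m(w)\notin \mathrm{DOP}(n)$, then $w$ is $S_n$-conjugate to itself by some odd $\pi$, so by (2) we get $\chi_{\lambda}^{+}(w)=\chi_{\lambda}^{-}(w)$, forcing each to equal $\chi_{\lambda}(w)/2$. For $\mu\in\mathrm{DOP}(n)$ with $w_{\mu}^{\pm}$ representing the two $A_n$-classes $C_\mu^\pm$, the sum relation $\chi_{\lambda}^{+}(w_{\mu}^{+})+\chi_{\lambda}^{-}(w_{\mu}^{+})=\chi_{\lambda}(w_{\mu})$ together with $\chi_{\lambda}^{\pm}(w_{\mu}^{-})=\chi_{\lambda}^{\mp}(w_{\mu}^{+})$ from (2) reduces the problem to identifying $\chi_{\lambda}(w_{\mu})$ and the single number $\delta:=\chi_{\lambda}^{+}(w_{\mu}^{+})-\chi_{\lambda}^{-}(w_{\mu}^{+})$. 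The Murnaghan--Nakayama rule applied to the self-conjugate shape $\lambda=\phi(\mu)$ with border strips of odd lengths $2m_i+1$ yields (after the classical sign accounting using the folding algorithm) $\chi_{\lambda}(w_{\mu})=\epsilon_{\mu}$.

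Finally, to compute $\delta$, I would use the orthogonality relation in $A_n$. The difference character $\theta:=\chi_{\lambda}^{+}-\chi_{\lambda}^{-}$ vanishes on every $A_n$-class except $C_{\mu}^{\pm}$ (by the first case of (3)), and it changes sign between $C_{\mu}^{+}$ and $C_{\mu}^{-}$ by (2). The norm condition $\langle \theta,\theta\rangle_{A_n}=2$ combined with $|C_{\mu}^{+}|=|C_{\mu}^{-}|=|C_\mu|/2=|S_n|/(2z_{\mu})$ gives $\delta^2=z_{\mu}$ up to sign; the sign is pinned down by the Galois-theoretic constraint that $\chi_{\lambda}^{\pm}(w_{\mu}^{+})$ are algebraic integers lying in $\mathbb{Q}(\sqrt{\epsilon_{\mu}z_{\mu}})$, forcing $\delta=\pm\sqrt{\epsilon_{\mu}z_{\mu}}$ and hence the claimed formula $\chi_{\lambda}^{\pm}(w_{\mu}^{+})=\tfrac{1}{2}(\epsilon_{\mu}\pm\sqrt{\epsilon_{\mu}z_{\mu}})$. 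The main obstacle here is the precise sign determination for $\delta$ and the verification that Murnaghan--Nakayama on a self-conjugate shape with odd distinct cycle lengths gives exactly $\epsilon_{\mu}$; this is a delicate combinatorial bookkeeping on the folding bijection $\phi:\mathrm{DOP}(n)\to \mathrm{SP}(n)$, and is where most of the work lies, the rest being soft Clifford-theoretic consequences.
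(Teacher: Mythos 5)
The paper does not prove \Cref{irreps_alt} at all: it is quoted verbatim from \cite[Theorem 5.12.5]{ap} as a known classical result, so there is no in-paper argument to compare yours against. Judged on its own, your outline follows the standard Clifford-theoretic route, and parts (1), (2), and the treatment of non-split classes in (3) are correct (the restriction criterion via $\chi_{\lambda}\epsilon=\chi_{\lambda}\iff\lambda=\lambda'$, the swapping of the two constituents by an odd permutation, and the observation that $\chi_{\lambda}^{+}(w)=\chi_{\lambda}^{-}(w)$ whenever $C_{m(w)}$ does not split).

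There is, however, a genuine gap at the crux of part (3). You assert that the difference character $\theta=\chi_{\lambda}^{+}-\chi_{\lambda}^{-}$ ``vanishes on every $A_n$-class except $C_{\mu}^{\pm}$ (by the first case of (3))'', but your first case only establishes vanishing on the \emph{non-split} classes, i.e.\ those with $m(w)\notin\mathrm{DOP}(n)$. The theorem also claims $\chi_{\lambda}^{+}(w)=\chi_{\lambda}^{-}(w)=\chi_{\lambda}(w)/2$ on every split class $C_{\nu}^{\pm}$ with $\nu\in\mathrm{DOP}(n)$, $\nu\neq\phi^{-1}(\lambda)$, and for such $w$ there is no odd permutation conjugating $w$ to itself, so your argument gives nothing there. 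Without this concentration statement, the norm computation $\langle\theta,\theta\rangle_{A_n}=2$ only yields $\sum_{\nu\in\mathrm{DOP}(n)}z_{\nu}^{-1}\lvert\theta(w_{\nu}^{+})\rvert^{2}=1$, which does not determine the individual values $\theta(w_{\nu}^{+})$; note also that a dimension count (there are $\lvert\mathrm{SP}(n)\rvert=\lvert\mathrm{DOP}(n)\rvert$ difference characters spanning the space of antisymmetric class functions supported on split classes) shows the matrix $\bigl(\theta_{\lambda}(w_{\nu}^{+})\bigr)$ is orthogonal up to scaling but not that it is diagonal. Proving that $\theta_{\lambda}$ is supported on the single class $C_{\phi^{-1}(\lambda)}$ is precisely the hard content of Frobenius's theorem; it requires either the explicit Murnaghan--Nakayama/generating-function computation of $\chi_{\lambda}^{\pm}$ on all of $\mathrm{DOP}(n)$ or an inductive construction of the split characters, and cannot be extracted from the soft Clifford theory plus a single norm identity as written. (The residual $\pm$ ambiguity in $\delta$ is, by contrast, harmless: it is just the labelling convention for $\chi_{\lambda}^{+}$ versus $\chi_{\lambda}^{-}$.)
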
 
 	
 	 We begin with an important lemma whose proof is easy.
 	 
 	 \begin{lemma}\label{lemma_alt_1}
 	 	Let $\lambda,\mu\vdash n$ be such that $\lambda\neq \lambda'$ and $\mu=\mu'$. Then $c(
 	 	\chi_{\lambda}\downarrow\chi_{\mu}^{+})\setminus\{\chi_{\mu}^{\pm}\}=c(\chi_{\lambda}\downarrow \chi_{\mu}^{-})\setminus \{\chi_{\mu}^{\pm}\}.$
 	 \end{lemma}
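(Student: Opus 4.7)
The plan is to show that the difference $\chi_{\lambda}\!\downarrow\chi_{\mu}^{+} - \chi_{\lambda}\!\downarrow\chi_{\mu}^{-}$ is orthogonal to every irreducible character of $A_n$ other than $\chi_{\mu}^{\pm}$. This would imply equality of multiplicities (not just of supports) in the two products, on all irreducibles outside $\{\chi_{\mu}^{+},\chi_{\mu}^{-}\}$.

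First, I would localize the support of $\chi_{\mu}^{+} - \chi_{\mu}^{-}$. Since $\mu = \mu'$, the folding bijection gives a unique $\alpha := \phi^{-1}(\mu) \in \mathrm{DOP}(n)$, and by \Cref{irreps_alt}(3), $\chi_{\mu}^{+}(w) = \chi_{\mu}^{-}(w)$ whenever $m(w) \neq \alpha$. Hence $\chi_{\mu}^{+} - \chi_{\mu}^{-}$ vanishes outside the two $A_n$-classes $C_{\alpha}^{+}, C_{\alpha}^{-}$ obtained by splitting $C_{\alpha}$. Moreover, the same theorem gives $(\chi_{\mu}^{+} - \chi_{\mu}^{-})(w_{\alpha}^{+}) = \sqrt{\epsilon_{\alpha} z_{\alpha}}$ and $(\chi_{\mu}^{+} - \chi_{\mu}^{-})(w_{\alpha}^{-}) = -\sqrt{\epsilon_{\alpha} z_{\alpha}}$, so the two contributions are negatives of each other.

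Next, I would verify that for every irreducible $\rho$ of $A_n$ with $\rho \notin \{\chi_{\mu}^{+},\chi_{\mu}^{-}\}$, the value $\rho(w_{\alpha}^{+})$ equals $\rho(w_{\alpha}^{-})$ and is real. This breaks into two cases using \Cref{irreps_alt}: if $\rho = \chi_{\nu}\!\downarrow$ for some $\nu \neq \nu'$, then $\rho$ agrees with $\chi_{\nu}$ on both classes, and $\chi_{\nu}(w_{\alpha}^{+}) = \chi_{\nu}(w_{\alpha}^{-})$ because the $w_{\alpha}^{\pm}$ are $S_n$-conjugate; if $\rho = \chi_{\nu}^{\pm}$ for a self-conjugate $\nu \neq \mu$, then $\phi^{-1}(\nu) \neq \alpha$, so \Cref{irreps_alt}(3) forces $\rho(w_{\alpha}^{\pm}) = \chi_{\nu}(w_{\alpha}^{\pm})/2$, which is once again real and constant across the two classes. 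In both cases the value is real, so $\overline{\rho(w_{\alpha}^{\pm})} = \rho(w_{\alpha}^{\pm})$. Likewise, $\chi_{\lambda}$ (being a character of $S_n$) takes a single real value on $C_{\alpha}$.

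With these inputs, I would expand
\[
\langle \chi_{\lambda}\!\downarrow(\chi_{\mu}^{+} - \chi_{\mu}^{-}), \rho\rangle = \frac{1}{|A_n|}\sum_{w \in C_{\alpha}^{+}\cup C_{\alpha}^{-}} \chi_{\lambda}(w)(\chi_{\mu}^{+} - \chi_{\mu}^{-})(w)\,\overline{\rho(w)},
\]
and observe that since $|C_{\alpha}^{+}| = |C_{\alpha}^{-}|$, since $\chi_{\lambda}$ and $\rho$ take equal values on $w_{\alpha}^{+}$ and $w_{\alpha}^{-}$, and since $\chi_{\mu}^{+} - \chi_{\mu}^{-}$ takes opposite values on those two classes, the two summands cancel, yielding zero. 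This proves the lemma.

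The only real subtlety is the case $\rho = \chi_{\nu}^{\pm}$ with $\nu = \nu' \neq \mu$, where one must invoke the fact that $\phi^{-1}$ is a bijection $\mathrm{SP}(n) \to \mathrm{DOP}(n)$ to ensure the class $C_{\alpha}$ is not the distinguished class for $\rho$, so that the square-root anomaly does not appear; this is the only step that is not immediate from linearity and the cancellation argument above.
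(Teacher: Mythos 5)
Your proposal is correct and follows essentially the same route as the paper: both arguments reduce to an inner-product computation localized at the split class $C_{\alpha}=C_{\phi^{-1}(\mu)}$, using the fact that every irreducible of $A_n$ other than $\chi_{\mu}^{\pm}$ takes equal (real) values on $w_{\alpha}^{+}$ and $w_{\alpha}^{-}$ while the anomalous square-root terms of $\chi_{\mu}^{\pm}$ cancel. Packaging this as the vanishing of $\langle \chi_{\lambda}\!\downarrow(\chi_{\mu}^{+}-\chi_{\mu}^{-}),\rho\rangle$ is just a slightly cleaner reorganization of the paper's term-by-term comparison of the two inner products.
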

 	
 	\begin{proof}
 		Let $\nu \vdash n$ be such that $\nu\neq \nu'$. Let $\theta=\phi^{-1}(\mu)$. Using \Cref{irreps_alt}, we get
 		\begin{equation}
 			\langle \chi_{\lambda}\downarrow\chi_{\mu}^{+}, \chi_{\nu}\downarrow \rangle=\frac{2}{n!}\left[\sum_{\substack{w\in A_n\\ m(w)\neq \theta}} \frac{\chi_{\lambda}(w)\chi_{\mu}(w)\chi_{\nu}(w)}{2}+\frac{|C_{\theta}|}{2}\chi_{\lambda}(w_{\theta})\chi_{\nu}(w_{\theta})\left(\chi_{\mu}^{+}(w_{\theta}^{+})+\chi_{\mu}^{-}(w_{\theta}^{-})\right)\right].
 		\end{equation}
 	Now computing as above the inner-product $\langle \chi_{\lambda}\downarrow\chi_{\mu}^{-}, \chi_{\nu}\downarrow \rangle$, and using the fact $\chi_{\mu}^{\pm}(w_{\theta}^{-})=\chi_{\mu}^{\mp}(w_{\theta}^{+})$, we conclude that $\langle \chi_{\lambda}\downarrow\chi_{\mu}^{+}, \chi_{\nu}\downarrow \rangle=\langle \chi_{\lambda}\downarrow\chi_{\mu}^{-}, \chi_{\nu}\downarrow \rangle$. If $\nu\vdash n$ is self-conjugate and $\nu\neq \mu$, a similar computation yields $\langle \chi_{\lambda}\downarrow\chi_{\mu}^{+}, \chi_{\nu}^{\pm} \rangle=\langle \chi_{\lambda}\downarrow\chi_{\mu}^{-}, \chi_{\nu}^{\pm}\rangle$.
 	\end{proof}
 	
 	\noindent The following theorem of Bessenrodt and Behns will be required.
 	
 	\begin{theorem}{\cite[Theorem 5.1]{bb}}\label{lemma_alt_2}
 		Let $n\geq 5$ and $\lambda,\mu \vdash n$. Suppose that $d(\chi_{\lambda}\chi_{\mu})=\max\{d(\nu)\mid \chi_{\nu}\in c(\chi_{\lambda})\}.$ Then $d(\chi_{\lambda}\chi_{\mu})=1$ if and only if one of them is $\chi_{(n)}$ or $\chi_{(1^n)}$, and the other one is $\chi_{(n-r,1^r)}$, where $0\leq r\leq n-1$.
 	\end{theorem}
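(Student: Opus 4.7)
The plan is to handle the two directions separately. For the ``if'' direction, the verification is immediate: since $\chi_{(n)}$ is trivial, $\chi_{(n)}\chi_\mu = \chi_\mu$, and since $\chi_{(1^n)}=\epsilon$, $\chi_{(1^n)}\chi_\mu=\chi_{\mu'}$. Hence when $\mu=\mu(r)$ is a hook the product is itself a (possibly conjugated) hook, which has Durfee rank $1$. Combined with the observation that the conjugate of a hook is again a hook, this settles the forward implication.

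For the ``only if'' direction, I would argue by contraposition: assume neither $\lambda$ nor $\mu$ lies in $\{(n),(1^n)\}$ with the other a hook, and produce a non-hook constituent of $\chi_\lambda\chi_\mu$. The argument naturally splits into two sub-cases. Suppose first that both $\lambda$ and $\mu$ are hooks, say $\lambda=\mu(e)$ and $\mu=\mu(f)$ with $1\le e\le f\le n-2$. I would apply \Cref{Kronecker_coeff_two_hook_shapes}(4) and exhibit an explicit double hook $\nu=(n_4,n_3,2^{d_2},1^{d_1})$ with $g_{\lambda\mu\nu}>0$. A natural candidate is $d_1=f-e$, $d_2=0$ (so $x=f-e$), $n_3=2$, $n_4=n-2-(f-e)$: the first-summand hypothesis $n_3-1\le(e+f-x)/2\le n_4$ reduces to $1\le e\le n-2-(f-e)$ and the condition $f-e\le d_1$ is automatic, while $n_4\ge n_3=2$ requires only mild ``edge'' handling. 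The remaining extremal configurations (e.g.\ $e=f$, $f=n-2$, or where the above $\nu$ collapses) can be dispatched by switching to a neighboring double hook such as $(n-2,2)$ or $(n-3,2,1)$.

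Now assume without loss of generality that $d(\lambda)\ge 2$. The plan is to exploit Kronecker symmetry $g_{\lambda\mu\nu}=g_{\mu\nu\lambda}$: exhibiting a non-hook $\nu$ with $\chi_\nu\in c(\chi_\lambda\chi_\mu)$ is equivalent to exhibiting a non-hook $\nu$ with $\chi_\lambda\in c(\chi_\mu\chi_\nu)$. I would test $\nu=(n-2,2)$ first, expanding $\chi_\mu\chi_{(n-2,2)}$ via $\chi_{(n-2,2)}=\sigma_{(n-2,2)}-\chi_{(n-1,1)}-\chi_{(n)}$ (Young's rule, \Cref{Youngrule}) together with \Cref{induction_commutes_with_tensor} and \Cref{standard_character_kronecker}; the resulting expression is a signed sum of induced and shifted characters, and the LR rule (\Cref{the_LR_rule}) then determines exactly when $\chi_\lambda$ appears. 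When this particular $\nu$ fails for a given pair, I would fall back to $(n-2,1^2)$, $(n-3,2,1)$, or conjugates, and use inclusion--exclusion against the known expansions.

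The main obstacle is precisely this second case: turning the Kronecker symmetry into a \emph{uniform} production of a non-hook constituent for every admissible $(\lambda,\mu)$ with $d(\lambda)\ge 2$. The difficulty is expected to concentrate on extremal $\lambda$ (rectangles and near-hooks), whose self-dual symmetries can cause the naive candidate $\nu$ to have vanishing Kronecker coefficient. I anticipate that a secondary induction on the Durfee rank of $\lambda$, or on $n$, will be required, drawing on the structural lemmas from Sections 2--3 of the excerpt to guarantee that \emph{some} small non-hook $\nu$ always works.
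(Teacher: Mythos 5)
This statement is not proved in the paper at all: it is quoted verbatim (with a small transcription slip --- the hypothesis should read $d(\chi_{\lambda}\chi_{\mu})=\max\{d(\nu)\mid \chi_{\nu}\in c(\chi_{\lambda}\chi_{\mu})\}$, i.e.\ it is the \emph{definition} of the Durfee rank of a product) from Bessenrodt--Behns \cite[Theorem 5.1]{bb}, so there is no in-paper argument to measure you against. Judged on its own terms, your proposal gets the easy half and one of the two hard sub-cases: the ``if'' direction is immediate as you say, and when both $\lambda=\mu(e)$ and $\mu=\mu(f)$ are non-linear hooks your use of \Cref{Kronecker_coeff_two_hook_shapes}(4) with $\nu=(n-2-(f-e),2,1^{f-e})$ does produce a Durfee-rank-$2$ constituent; you only need to add the reduction to $e+f\le n-1$ via conjugation (the theorem you invoke assumes this) and to treat the degenerate case $f-e=n-3$, where your $\nu$ is not a partition but $\chi_{(n-1,1)}\chi_{(2,1^{n-2})}$ visibly contains $\chi_{(2^2,1^{n-4})}$.

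The genuine gap is the case you yourself flag as the ``main obstacle'': $d(\lambda)\ge 2$ and $\mu\notin\{(n),(1^n)\}$. Here you do not give a proof --- you propose testing $\nu=(n-2,2)$, $(n-2,1^2)$, $(n-3,2,1)$ via Kronecker symmetry and ``inclusion--exclusion'', and then concede that ``a secondary induction \dots will be required'' without carrying it out. That deferral is the entire content of the Bessenrodt--Behns theorem; nothing in Sections 2--3 of this paper supplies it, and the symmetry $g_{\lambda\mu\nu}=g_{\mu\nu\lambda}$ merely converts one unsolved positivity question ($g_{\lambda\mu\nu}>0$ for some non-hook $\nu$) into another ($\chi_{\lambda}\in c(\chi_{\mu}\chi_{\nu})$), which for a fixed small $\nu$ can and does fail for particular pairs. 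The actual proof in \cite{bb} handles this case with Dvir's theorem on the maximal first part (and, dually, maximal length) of constituents of $\chi_{\lambda}\chi_{\mu}$, namely that a constituent with first part $|\lambda\cap\mu|$ always occurs; this is the missing structural input that makes the argument uniform. As it stands, your proposal establishes the theorem only when both partitions are hooks.
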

 	
 	\noindent Before moving further, we make the following observation: Suppose $\nu\vdash n$ with $d(\nu)=2$. Then $\nu=\nu'$ implies that $n$ is even. Indeed, if $\nu=\nu'$, then the unfolding $\phi^{-1}(\nu)$ is a partition of $n$ with two distinct and odd parts, whence $n$ is even. As a result, if $n$ is odd, then $\chi_{\nu}\downarrow$ is an irreducible character of $A_n$. 
 	\begin{lemma}\label{lemma_alt_3}
 		Let $n\geq 5$ be odd and $k=\frac{n-1}{2}$. Then, for every $1\leq r\leq \frac{n-3}{2}$, there exists $\nu\vdash n$ with $d(\nu)=2$ and $\chi_{\nu}\downarrow \in c(\chi_{\mu(r)}\downarrow\chi_{\mu(k)}^{+}), c(\chi_{\mu(r)}\downarrow \chi_{\mu(k)}^{-})$.
 	\end{lemma}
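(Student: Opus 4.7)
The plan is to exhibit an explicit double-hook $\nu \vdash n$ that works uniformly for all $r$, verify that $\chi_\nu$ appears in the Kronecker product $\chi_{\mu(r)}\chi_{\mu(k)}$, and then use \Cref{lemma_alt_1} to transfer the resulting $A_n$-constituent between $\chi_{\mu(k)}^+$ and $\chi_{\mu(k)}^-$. Concretely, I would set $\nu = (k-1+r,\, 2,\, 1^{k-r})$. A direct count gives $|\nu| = (k-1+r) + 2 + (k-r) = 2k+1 = n$; moreover $n_4 := k-1+r \ge n_3 := 2$ since $r \ge 1$ and $k \ge 2$, so $\nu$ is a genuine double hook with Durfee rank exactly $2$. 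Its transpose $\nu' = (k-r+2,\, 2,\, 1^{k-3+r})$ is not equal to $\nu$ (equality would force $r = 3/2$), so by \Cref{irreps_alt}(1) the restriction $\chi_\nu\!\downarrow$ is an irreducible character of $A_n$.

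The first main step is a one-line Kronecker-coefficient computation. In \Cref{Kronecker_coeff_two_hook_shapes}(4) I would take $e = r$, $f = k$ (these satisfy $1 \le e \le f$ and $e + f \le n-2$) and the double-hook parameters $n_4 = k-1+r$, $n_3 = 2$, $d_2 = 0$, $d_1 = k-r$, $x = k-r$. Since $(r+k-x)/2 = r$ lies in the interval $[1,\, k-1+r]$, the first summand of the formula equals
\[
((\,1 \le (r+k-x)/2 \le k-1+r\,))\cdot((\,k-r \le k-r\,)) = 1,
\]
so $g_{\mu(r)\mu(k)\nu} \ge 1$. Hence $\chi_\nu$ is a constituent of $\chi_{\mu(r)}\chi_{\mu(k)}$ in $\mathrm{Irr}(S_n)$, and restricting to $A_n$ yields
\[
\chi_\nu\!\downarrow \,\in\, c\bigl(\chi_{\mu(r)}\!\downarrow\cdot\chi_{\mu(k)}\!\downarrow\bigr) = c\bigl(\chi_{\mu(r)}\!\downarrow\cdot(\chi_{\mu(k)}^+ + \chi_{\mu(k)}^-)\bigr),
\]
so $\chi_\nu\!\downarrow$ occurs in at least one of $\chi_{\mu(r)}\!\downarrow\chi_{\mu(k)}^+$ or $\chi_{\mu(r)}\!\downarrow\chi_{\mu(k)}^-$.

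To upgrade ``at least one'' to ``both'', I would observe that $r \le (n-3)/2 < k$ forces $\mu(r) \ne \mu(r)'$, while $d(\nu) = 2 \ne 1 = d(\mu(k))$ rules out $\chi_\nu\!\downarrow \in \{\chi_{\mu(k)}^+, \chi_{\mu(k)}^-\}$. Applying \Cref{lemma_alt_1} with its $\lambda = \mu(r)$ and $\mu = \mu(k)$ then transfers membership from one sign to the other, giving $\chi_\nu\!\downarrow$ in both products and completing the argument.

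I do not expect a significant obstacle here: the explicit choice of $\nu$ collapses the Kronecker computation to a one-line indicator check, and the symmetry built into \Cref{lemma_alt_1} handles the sign switch for free. The only care needed is a sanity check in the edge case $r = 1$ with small $n$; already at $n = 5, r = 1$ one has $\nu = (2,2,1)$, a valid double hook of Durfee rank $2$ distinct from its conjugate, so the argument runs without modification.
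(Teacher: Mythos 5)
Your proof is correct, and it differs from the paper's in one genuine respect: the existence of a Durfee-rank-$2$ constituent $\nu$ of $\chi_{\mu(r)}\chi_{\mu(k)}$. The paper gets this abstractly by combining \Cref{Kronecker_coeff_two_hook_shapes}(1) (all constituents have Durfee rank at most $2$) with the Bessenrodt--Behns theorem (\Cref{lemma_alt_2}), which guarantees the maximal Durfee rank exceeds $1$ since neither factor is $\chi_{(n)}$ or $\chi_{(1^n)}$; you instead produce the explicit witness $\nu=(k-1+r,2,1^{k-r})$ and verify $g_{\mu(r)\mu(k)\nu}\geq 1$ directly from the Remmel--Rosas formula in \Cref{Kronecker_coeff_two_hook_shapes}(4). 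Your indicator computation checks out ($e=r$, $f=k$, $x=d_1=k-r$ gives $(e+f-x)/2=r\in[n_3-1,n_4]$ and $f-e=d_1$), as do the side conditions $e+f\leq n-2$ and $\nu\neq\nu'$ (the latter is also automatic here since $n$ is odd, by the observation preceding the lemma). The final transfer step via \Cref{lemma_alt_1} is identical in both arguments, and your justification that $\chi_{\nu}\!\downarrow\notin\{\chi_{\mu(k)}^{+},\chi_{\mu(k)}^{-}\}$ is sound. What your route buys is self-containedness -- it avoids invoking the external result \Cref{lemma_alt_2} at the cost of a short explicit computation; what the paper's route buys is brevity and no need to exhibit or check a particular $\nu$.
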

 
 	\begin{proof}
 		Using \Cref{Kronecker_coeff_two_hook_shapes} and the previous lemma, we conclude that there exists $\nu\vdash n$ with $d(\nu)=2$ and $\chi_{\nu}\in c(\chi_{\mu(r)}\chi_{\mu(k)})$. This yields that $\chi_{\nu}\downarrow \in c(\chi_{\mu(r)}\downarrow\chi_{\mu(k)}\downarrow)$, whence our assertion holds by \Cref{lemma_alt_1}.
 	\end{proof}
 	\noindent Next, we determine the irreducible constituents of $\chi_{\mu(k)}^{\pm 2}$ and $\chi_{\mu(k)}^{+}\chi_{\mu(k)}^{-}$, where $n$ is odd and $k=\frac{n-1}{2}$.
 	\begin{lemma}\label{lemma_alt_4}
 		Let $n\geq 5$ be odd and $k=\frac{n-1}{2}$. We have the following:
 		\begin{enumerate}
 			\item  If $n\equiv 3(\Mod\; 4)$, then $\displaystyle \chi_{\mu(k)}^{\pm 2}=\sum_{\substack{0\leq i\leq \frac{n-3}{2} \\k\equiv i (\Mod\;2)}}\chi_{\mu(i)}\downarrow + \sum_{\substack{\{\nu,\nu'\} \\ d(\nu)=2}}\chi_{\nu}\downarrow +\chi_{\mu(k)}^{\mp}$.
 			
 			\item If $n\equiv 1(\Mod\; 4)$, then $\displaystyle \chi_{\mu(k)}^{\pm 2}=\sum_{\substack{0\leq i\leq \frac{n-3}{2} \\k\equiv i (\Mod\;2)}}\chi_{\mu(i)}\downarrow + \sum_{\substack{\{\nu,\nu'\} \\d(\nu)=2}}\chi_{\nu}\downarrow +\chi_{\mu(k)}^{\pm}$.
 			
 			\item $\displaystyle \chi_{\mu(k)}^{+}\chi_{\mu(k)}^{-}=\sum_{\substack{0\leq i\leq \frac{n-3}{2} \\k\equiv i+1 (\Mod\;2)}}\chi_{\mu(i)}\downarrow + \sum_{\substack{\{\nu,\nu'\}\\d(\nu)=2}}\chi_{\mu}\downarrow$.
 		\end{enumerate}
 	\end{lemma}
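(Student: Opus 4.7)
Set $\chi=\chi_{\mu(k)}\downarrow$ and $\delta=\chi_{\mu(k)}^{+}-\chi_{\mu(k)}^{-}$, so that
\begin{align*}
\chi_{\mu(k)}^{+2}+\chi_{\mu(k)}^{-2} &= \tfrac{1}{2}(\chi^2+\delta^2),\\
\chi_{\mu(k)}^{+}\chi_{\mu(k)}^{-} &= \tfrac{1}{4}(\chi^2-\delta^2).
\end{align*}
The plan is to compute $\chi^2$ and $\delta^2$ separately, combine them to obtain part (3) and the sum $\chi_{\mu(k)}^{+2}+\chi_{\mu(k)}^{-2}$, and finally disambiguate between parts (1) and (2) by a character-value calculation at an $n$-cycle.

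\textbf{Computing $\chi^2$.} Apply \Cref{Kronecker_coeff_two_hook_shapes} with $e=f=k$. The conditions simplify so that $g_{\mu(k)\mu(k)\lambda}=0$ whenever $d(\lambda)>2$, $g_{\mu(k)\mu(k)\mu(r)}=1$ for every hook $\mu(r)$ ($0\le r\le n-1$), and $g_{\mu(k)\mu(k)\lambda}=2$ for every double hook $\lambda$ (both indicators in (4) being automatically satisfied since $n_3\le n_4$). Hence
$$\chi_{\mu(k)}^2=\sum_{r=0}^{n-1}\chi_{\mu(r)}+2\!\!\sum_{\nu\text{ double hook}}\!\!\chi_\nu.$$
Among these, $\mu(k)$ is the unique self-conjugate hook, and because $n$ is odd there is no self-conjugate partition of Durfee rank $2$: such a partition would unfold to a DOP with exactly two distinct odd parts, forcing $n$ to be even. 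Pairing conjugate partitions and restricting to $A_n$ yields
$$\chi^2=2\sum_{r=0}^{(n-3)/2}\chi_{\mu(r)}\downarrow+\chi_{\mu(k)}^{+}+\chi_{\mu(k)}^{-}+4\!\!\sum_{\{\nu,\nu'\},\,d(\nu)=2}\!\!\chi_\nu\downarrow.$$

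\textbf{Computing $\delta^2$.} Since $\phi^{-1}(\mu(k))=(n)$, \Cref{irreps_alt} gives that $\delta$ is supported on $C_{(n)}^{+}\cup C_{(n)}^{-}$ with $\delta(w_{(n)}^{\pm})=\pm\sqrt{\epsilon n}$, where $\epsilon=\epsilon_{(n)}=(-1)^k$. Thus $\delta^2$ is the class function equal to $\epsilon n$ on $C_{(n)}^{+}\cup C_{(n)}^{-}$ and $0$ elsewhere. Using $|C_{(n)}^{\pm}|/|A_n|=1/n$, for any irreducible $\psi$ of $A_n$,
$$\langle \delta^2,\psi\rangle=\epsilon\,\bigl(\overline{\psi(w_{(n)}^+)}+\overline{\psi(w_{(n)}^-)}\bigr).$$
By Murnaghan--Nakayama, $\chi_\lambda(w_{(n)})=(-1)^r$ if $\lambda=\mu(r)$ and $0$ if $\lambda$ is not a hook; and \Cref{irreps_alt} gives $\chi_{\mu(k)}^{+}(w_{(n)}^+)+\chi_{\mu(k)}^{-}(w_{(n)}^+)=\epsilon$. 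Hence the double-hook contributions vanish and
$$\delta^2=2\epsilon\sum_{r=0}^{(n-3)/2}(-1)^r\,\chi_{\mu(r)}\downarrow+\chi_{\mu(k)}^{+}+\chi_{\mu(k)}^{-}.$$

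\textbf{Combining and disambiguating.} Substituting into $\chi^+\chi^-=\tfrac{1}{4}(\chi^2-\delta^2)$ yields part (3) directly: the hook coefficient collapses to $0$ when $r\equiv k\pmod 2$ and to $1$ otherwise. Substituting into $\chi^{+2}+\chi^{-2}=\tfrac{1}{2}(\chi^2+\delta^2)$ gives
$$\chi_{\mu(k)}^{+2}+\chi_{\mu(k)}^{-2}=2\!\!\sum_{\substack{0\le r\le(n-3)/2\\ r\equiv k\,(\Mod 2)}}\!\!\chi_{\mu(r)}\downarrow+\chi_{\mu(k)}^{+}+\chi_{\mu(k)}^{-}+2\!\!\sum_{\{\nu,\nu'\}}\!\!\chi_\nu\downarrow.$$
Conjugation by any odd permutation swaps $\chi_{\mu(k)}^{+}\leftrightarrow\chi_{\mu(k)}^{-}$ and $\chi_{\mu(k)}^{+2}\leftrightarrow\chi_{\mu(k)}^{-2}$ while fixing all non-self-conjugate restrictions; this forces each of $\chi_{\mu(k)}^{\pm 2}$ to contain exactly one of $\chi_{\mu(k)}^{+},\chi_{\mu(k)}^{-}$ with multiplicity one, and half of the remaining coefficients above. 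To determine which, I evaluate both sides at $w_{(n)}^+$. Directly, $\chi_{\mu(k)}^{+2}(w_{(n)}^+)=\tfrac{1}{4}(\epsilon+\sqrt{\epsilon n})^2=\tfrac{1+\epsilon n}{4}+\tfrac{\epsilon\sqrt{\epsilon n}}{2}$. Comparing this with the value of the proposed RHS at $w_{(n)}^+$, where $\chi_{\mu(r)}(w_{(n)})=(-1)^r$ is real and $\chi_{\mu(k)}^{\pm}(w_{(n)}^+)=\tfrac{1}{2}(\epsilon\pm\sqrt{\epsilon n})$ carries the only non-real contribution, pins down the sign: $\chi_{\mu(k)}^{+2}$ contains $\chi_{\mu(k)}^{+}$ exactly when $\epsilon=1$, that is, $n\equiv 1\pmod 4$ (part (2)), and contains $\chi_{\mu(k)}^{-}$ when $\epsilon=-1$, that is, $n\equiv 3\pmod 4$ (part (1)). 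The main obstacle is this last disambiguation: since $\chi^2$ and $\delta^2$ both contain $\chi_{\mu(k)}^{+}+\chi_{\mu(k)}^{-}$ symmetrically, the splitting cannot be detected by $S_n$-invariant data and must be extracted by careful bookkeeping of the complex-valued quantity $\sqrt{\epsilon n}$ whose behavior depends on $n\bmod 4$.
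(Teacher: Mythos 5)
Your overall route is essentially the paper's: decompose $\chi_{\mu(k)}\downarrow^{2}$ via the Remmel--Rosas theorem (\Cref{Kronecker_coeff_two_hook_shapes}), and then refine to the $A_n$-constituents using the character values on the two split classes of $n$-cycles. Packaging the split-class correction as the class function $\delta^{2}$, where $\delta=\chi_{\mu(k)}^{+}-\chi_{\mu(k)}^{-}$ is supported on $C_{(n)}^{\pm}$, is a clean reorganization of the inner products $\tfrac14\langle\chi_{\mu(k)}\downarrow^{2},\psi\rangle\pm\tfrac12(-1)^{k+i}$ that the paper computes term by term, and your computations of $\chi^{2}$, $\delta^{2}$, part (3), and the sum $\chi_{\mu(k)}^{+2}+\chi_{\mu(k)}^{-2}$ are all correct, as is the odd-permutation swap argument reducing everything to deciding \emph{which} of $\chi_{\mu(k)}^{+},\chi_{\mu(k)}^{-}$ lies in $c(\chi_{\mu(k)}^{+2})$.

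That last disambiguation is where there is a genuine gap. You isolate ``the only non-real contribution'' $\tfrac{s}{2}\sqrt{\epsilon n}$ and match it against $\tfrac{\epsilon}{2}\sqrt{\epsilon n}$. But when $n\equiv 1\;(\Mod\;4)$ we have $\epsilon=1$, so $\sqrt{\epsilon n}=\sqrt n$ is real and there is no non-real term to compare; the natural repair (compare irrational parts) still fails whenever $n$ is an odd perfect square ($n=9,25,49,\dots$, all $\equiv 1\;(\Mod\;4)$), since then every quantity in the identity at $w_{(n)}^{+}$ is rational and nothing is ``pinned down'' by inspection. Two ways to close this. (i) Brute force: you already know the rational part of the right-hand side exactly (it is $(A+\tfrac12)\epsilon$ with $A=\#\{0\le r\le\tfrac{n-3}{2}:r\equiv k\;(\Mod\;2)\}$), so equate full numerical values at $w_{(n)}^{+}$ and solve for $s$; this works but requires the extra count. (ii) More structurally, and closer to what the paper does: part (3), which you have already established at this point, gives $\langle\chi_{\mu(k)}^{+}\chi_{\mu(k)}^{-},\chi_{\mu(k)}^{\pm}\rangle=0$. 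If $n\equiv1\;(\Mod\;4)$ then $\chi_{\mu(k)}^{\pm}$ are real-valued, so $\langle\chi_{\mu(k)}^{+2},\chi_{\mu(k)}^{-}\rangle=\langle\chi_{\mu(k)}^{+},\overline{\chi_{\mu(k)}^{+}}\,\chi_{\mu(k)}^{-}\rangle=\langle\chi_{\mu(k)}^{+}\chi_{\mu(k)}^{-},\chi_{\mu(k)}^{+}\rangle=0$, forcing $\chi_{\mu(k)}^{+}\in c(\chi_{\mu(k)}^{+2})$; if $n\equiv3\;(\Mod\;4)$ then $\overline{\chi_{\mu(k)}^{+}}=\chi_{\mu(k)}^{-}$ and the same manipulation gives $\langle\chi_{\mu(k)}^{+2},\chi_{\mu(k)}^{+}\rangle=\langle\chi_{\mu(k)}^{+}\chi_{\mu(k)}^{-},\chi_{\mu(k)}^{+}\rangle=0$, forcing $\chi_{\mu(k)}^{-}\in c(\chi_{\mu(k)}^{+2})$. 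With either repair the proof is complete.
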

 
 	\begin{proof}
 	  Since we have assumed that $n$ is odd, we conclude that $\nu\neq \nu'$ if $d(\nu)=2$. Using \Cref{Kronecker_coeff_two_hook_shapes}, we have the following decomposition of $\chi_{\mu(k)}\downarrow^2$ into irreducible characters of $A_n$.
 	 \begin{equation}
 	 	\chi_{\mu(k)}\downarrow^2=2\sum_{0\leq i\leq \frac{n-3}{2}}\chi_{\mu(i)}\downarrow +4\sum_{\substack{\{\nu,\nu'\}\\d(\nu)=2}}\chi_{\nu}\downarrow+\chi_{\mu(k)}^{+}+\chi_{\mu(k)}^{-}.
 	 \end{equation}
 	 Notice that $\phi^{-1}(\mu(k))=(n)$. Hence, $\chi_{\lambda}^{\pm}(w_{(n)}^{+})=\frac{1}{2}((-1)^k+\sqrt{(-1)^kn})$. When $0\leq i\leq \frac{n-3}{2}$, a simple inner-product computation using the above fact yields 
 	 
 	 \begin{enumerate}[label=(\alph*)]
 	 	\item $\langle \chi_{\mu(k)}^{\pm 2}, \chi_{\mu(i)}\downarrow \rangle =\frac{1}{4}\langle \chi_{\mu(k)}\downarrow, \chi_{\mu(i)}\downarrow \rangle + \frac{1}{2}(-1)^{k+i}=\frac{1}{2}(1+(-1)^{k+i}).$
 	 	
 	 	\item $\langle \chi_{\mu(k)}^{+}\chi_{\mu(k)}^{-}, \chi_{\mu(i)}\downarrow \rangle =\frac{1}{4}\langle \chi_{\mu(k)}\downarrow, \chi_{\mu(i)}\downarrow \rangle + \frac{1}{2}(-1)^{k+i+1}=\frac{1}{2}(1+(-1)^{k+i+1}).$
 	 	
 	 	\item $\langle \chi_{\mu(k)}^{\pm 2}, \chi_{\nu}\downarrow \rangle=\langle \chi_{\mu(k)}^{+}\chi_{\mu(k)}^{-}, \chi_{\nu}\downarrow \rangle=\frac{1}{4}\langle \chi_{\mu(k)}\downarrow, \chi_{\nu}\downarrow \rangle$ when $d(\nu)=2$.
 	 \end{enumerate}
  	In (a) and (b), we use the fact that $\chi_{\mu(i)}(w_{(n)})=(-1)^i$, and (c) is immediate since $\chi_{\nu}(w_{(n)})=0$. Clearly, (a), (b), and (c) determine the required decomposition except for the membership of $\chi_{\mu(k)}^{+}$ and $\chi_{\mu(k)}^{-}$. It is clear that $\chi_{\mu(k)}^{\pm} \notin c(\chi_{\mu(k)}^+\chi_{\mu(k)}^-)$. Assume that $n\equiv 3(\Mod\;4)$. In this case, we have $\chi_{\mu(k)}^{-}=\overline{\chi_{\mu(k)}^{+}}$ and hence it is easily seen that $\langle \chi_{\mu(k)}^{+2},\chi_{\mu(k)}^{-}\rangle=\langle \chi_{\mu(k)}^{-2},\chi_{\mu(k)}^+ \rangle=1$. When $n\equiv 1(\Mod\;4)$, once again a direct computation yields the desired result.
 	\end{proof}
 	
 	\noindent We are now ready to prove the theorem.

	\begin{proof}[\textbf{Proof of \Cref{Theorem_4}}]
		Since $d(\lambda)=d(\lambda')$, it is clear that $\ccn(\chi_{\mu(k)}\downarrow;A_n)=\ccn(\chi_{\mu(k)};S_n)$. Therefore, when $n$ is even, the result follows by using \Cref{Theorem_3}. Assume that $n$ is odd. Note that $\chi_{\mu(k)}\downarrow^3=\chi_{\mu(k)}^{+3}+\chi_{\mu(k)}^{-3}+\chi_{\mu}^{+2}\chi_{\mu(k)}^{-}+\chi_{\mu(k)}^{+}\chi_{\mu(k)}^{-2}$.  We claim that $c(\chi_{\mu}^{+3})=c(\chi_{\mu(k)}^{-3})\supseteq c(\chi_{\mu}^{+2}\chi_{\mu(k)}^{-})=c(\chi_{\mu(k)}^{+}\chi_{\mu(k)}^{-2})$. We first show that $c(\chi_{\mu(k)}^{+3})\supseteq c(\chi_{\mu(k)}^{+}\chi_{\mu(k)}^{-2})$. Using the previous lemma, we have the following:
		\begin{equation}\label{eqn_1}
			\chi_{\mu(k)}^{+3}=\sum_{\substack{0\leq i\leq \frac{n-3}{2} \\k\equiv i (\Mod\;2)}}(\chi_{\mu(i)}\downarrow\chi_{\mu(k)}^{+}) + \sum_{\substack{\{\nu,\nu'\} \\ d(\nu)=2}}(\chi_{\nu}\downarrow\chi_{\mu(k)}^{+}) +\chi\chi_{\mu(k)}^{+},
		\end{equation}
		where $\chi$ is $\chi_{\mu(k)}^{-}$ (resp. $\chi_{\mu(k)}^{+})$ when $n\equiv 3(\Mod\;4)$ (resp. $n\equiv 1(\Mod\;4)$). Also,
		\begin{equation}\label{eqn_2}
			\chi_{\mu(k)}^{+}\chi_{\mu(k)}^{-2}=\sum_{\substack{0\leq i\leq \frac{n-3}{2} \\k\equiv i (\Mod\;2)}}(\chi_{\mu(i)}\downarrow\chi_{\mu(k)}^{+}) + \sum_{\substack{\{\nu,\nu'\} \\ d(\nu)=2}}(\chi_{\nu}\downarrow\chi_{\mu(k)}^{+}) +\chi'\chi_{\mu(k)}^{+},
		\end{equation}
		where $\chi'$ is $\chi_{\mu(k)}^{+}$ (resp. $\chi_{\mu(k)}^{-})$ when $n\equiv 3(\Mod\;4)$ (resp. $n\equiv 1(\Mod\;4)$). The first two summands of both equations are the same. The last summand in the above two equations differs only by the irreducible characters $\chi_{\mu(i)}\downarrow$ where $0\leq i\leq \frac{n-3}{2}$, and possibly one of $\chi_{\mu(k)}^{+}$ or $\chi_{\mu(k)}^{-}$. By \Cref{lemma_alt_3}, for $1\leq i\leq \frac{n-3}{2}$, we conclude that each $\chi_{\mu(i)}\downarrow$ appear as  constituents of the second summand in both the equations. Further, $\chi_{\mu(k)}^{+}$ and $\chi_{\mu(k)}^{-}$ appears as a constituent of the second summand once again. Finally, note that $\chi\chi_{\mu(k)}^{+}$ contains the trivial character as well, whence the result follows. Now,
		\begin{equation}\label{eqn_3}
			\chi_{\mu(k)}^{-}\chi_{\mu(k)}^{+2}=\sum_{\substack{0\leq i\leq \frac{n-3}{2} \\k\equiv i (\Mod\;2)}}(\chi_{\mu(i)}\downarrow\chi_{\mu(k)}^{-}) + \sum_{\substack{\{\nu,\nu'\} \\ d(\nu)=2}}(\chi_{\nu}\downarrow\chi_{\mu(k)}^{-}) +\rho\chi_{\mu(k)}^{-},
		\end{equation}
		where $\rho$ is $\chi_{\mu(k)}^{-}$ (resp. $\chi_{\mu(k)}^{+})$ when $n\equiv 3(\Mod\;4)$ (resp. $n\equiv 1(\Mod\;4)$). Comparing \Cref{eqn_2} and \Cref{eqn_3}, we observe that the first two summand have the  same irreducible constituents by \Cref{lemma_alt_1}. Also, the third summand in both equations is the same. Finally,
		
		\begin{equation}\label{eqn_4}
			\chi_{\mu(k)}^{-3}=\sum_{\substack{0\leq i\leq \frac{n-3}{2} \\k\equiv i (\Mod\;2)}}(\chi_{\mu(i)}\downarrow\chi_{\mu(k)}^{-}) + \sum_{\substack{\{\nu,\nu'\} \\ d(\nu)=2}}(\chi_{\nu}\downarrow\chi_{\mu(k)}^{-}) +\psi\chi_{\mu(k)}^{-},
		\end{equation}
		where $\psi$ is $\chi_{\mu(k)}^{+}$ (resp. $\chi_{\mu(k)}^{-})$ when $n\equiv 3(\Mod\;4)$ (resp. $n\equiv 1(\Mod\;4)$). Comparing the above equation with \Cref{eqn_1} and by using \Cref{lemma_alt_1}, it follows that $c(\chi_{\mu(k)}^{+3})=c(\chi_{\mu(k)}^{-3})$. Thus, the claim is established, and it follows that $c(\chi_{\mu(k)}^{+3})=c(\chi_{\mu(k)}^{-3})=c(\chi_{\mu(k)}\downarrow)$. Using \Cref{basic_lemma_3}, the theorem follows.
	\end{proof}
	
	\subsection*{Acknowledgment} 
	We thank Jyotirmoy Ganguly for fruitful discussions. We also thank Amritanshu Prasad, Sankaran Viswanath, and Arun Ram for their encouragement and helpful discussions. We thank Mainak Ghosh for his insight in proving \Cref{upperbound_covering_number_sigma_tworow} and  \Cref{covering_number_sigma_tworow_specialcase}. 
	\bibliographystyle{abbrv}
	\bibliography{references}
\end{document}